\documentclass[12pt]{amsart}
\usepackage{graphicx}
\usepackage{tikz}        
\usepackage{amssymb}
\usepackage{epstopdf}
\usepackage[mathscr]{eucal}
\usepackage{amsmath,amssymb,amscd}
\usepackage{color}
\usepackage{epsfig}
\usepackage{bbold}
\newtheorem{theorem}{Theorem}
\newtheorem{lemma}{Lemma}

\newtheorem{definition}{Definition}
\newtheorem{corollary}{Corollary}
\newtheorem{criterion}{Criterion}
\newtheorem{proposition}{Proposition}
\theoremstyle{definition}
\newtheorem{remark}{Remark}

\newtheorem{example}{Example}

\setcounter{tocdepth}{3}
\setcounter{secnumdepth}{3}

\def \mb{\mathbb}

\def \mk{\mathfrak}



\def \R{\mb R}                 


\def \a{\alpha}         
\def \b{\beta}           
\def \D{\Delta}         
\def \th{\theta}       
\def \t{\widetilde}     

\newcommand {\csn} {\text{csn}}
\newcommand {\arccsn} {\text{arccsn}}
\newcommand {\sn} {\text{sn}}
\newcommand {\ctn} {\text{ctn}}

\def \S{\mb S}        
\def \H{\mb H}        
\def \M{\mb M}        
\def\I{\mathbb{I}}
\def\v{{\bf v}}
\def\u{{\bf u}}

\newcommand {\q} {\mathbf{q}}
\newcommand {\p} {\mathbf{p}}
\newcommand {\F} {\mathbf{F}}

\def \and{\mbox{and}}
\usepackage[top=4cm, bottom=4cm, left=3.5cm, right=3.5cm]{geometry}
\DeclareGraphicsRule{.tif}{png}{.png}{`convert #1 `dirname #1`/`basename #1 .tif`.png}

\title{Central configurations of the curved $N$-body problem}
\begin{document}
\maketitle
\markboth{Florin Diacu, Cristina Stoica, and Shuqiang Zhu}{Central configurations of the curved $N$-body problem}
\author{\begin{center}
{ \bf Florin Diacu}$^{1,2}$, {\bf Cristina Stoica}$^3$, and {\bf Shuqiang Zhu$^2$}\\
\bigskip
{\footnotesize $^1$Pacific Institute for the Mathematical Sciences\\
and\\
$^2$Department of Mathematics and Statistics\\
University of Victoria\\
P.O.~Box 1700 STN CSC\\
Victoria, BC, Canada, V8W 2Y2\\
\bigskip
$^3$Department of Mathematics\\
Wilfrid Laurier University\\
75 University Avenue West\\
Waterloo, ON, Canada, N2L 3C5\\
\bigskip
diacu@uvic.ca, cstoica@wlu.ca, zhus@uvic.ca\\
}
\end{center}


\begin{abstract}
We consider the $N$-body problem of celestial mechanics in spaces of nonzero constant curvature. Using the concept of locked inertia tensor, we compute the moment of inertia for systems moving on spheres and hyperbolic spheres and show that we can recover the classical definition in the Euclidean case. After proving some criteria for the existence of relative equilibria, we find a natural way to define the concept of central configuration in curved spaces using the moment of inertia, and show that our definition is formally similar to the one that governs the classical problem. The existence criteria we develop for central configurations help us provide several examples and prove that, for any given point masses on spheres and hyperbolic spheres, central configurations always exist. We end our paper with results concerning the number of central configurations that lie on the same geodesic, thus extending the celebrated theorem of Moulton to hyperbolic spheres and pointing out that it has no straightforward generalization to spheres, where the count gets complicated even in the case $N=2$.
\end{abstract}



{
\tableofcontents

}


\section{Introduction}

The notion of central configuration for the $N$-body problem of celestial mechanics was introduced by Pierre-Simon Laplace in 1789 in connection with the discovery of Eulerian and Lagrangian orbits, \cite{Laplace}, \cite{Euler}, \cite{Lagrange}. But a first systematic study of this concept appeared only in 1900, when Otto Dziobek published a fundamental paper on central configurations, \cite{Dziobek}. Research in this direction has continued ever since, showing over the past decades that central configurations are essential for understanding the equations of motion that describe the $N$-body problem. Although breakthroughs are rare in this difficult area of mathematics, some recent progress has been made on the Wintner-Smale conjecture, which we will discuss later in detail.

\subsection{Motivation}

In 1772 Joseph Louis Lagrange found the equilateral solutions of the 3-body problem and rediscovered the collinear orbits, whose existence Leonhard Euler had proved a decade earlier.
These particular motions, called homographic because their configurations stay similar to themselves for all time, can be decomposed into homothetic solutions and relative equilibria. The former are dilations and/or contractions of the particle system without rotation, whereas the latter are rotations without dilations or contractions, such that the mutual distances remain constant during the motion. Starting from the homothetic Lagrangian orbits, Laplace noticed that it may be simpler to seek the geometrical configurations that remain similar to themselves, which we now call central configurations, instead of looking for the homographic solutions to the differential equations, \cite{Wintner}. From the mathematical point of view, central configurations do not involve the time variable and are described by the system 
$$
\nabla U(\q)=\lambda\nabla I(\q),
$$
where $\q$ gives the positions of the bodies, $U$ is the force function (the negative of the potential), $I$ is the moment of inertia as defined in \eqref{long} below, $\lambda$ is a constant, and $\nabla$ denotes the gradient. Every central configuration automatically provides classes of relative equilibrium, homothetic, and homographic orbits. Therefore the dynamical question of finding certain solutions of an ordinary differential equation is reduced to an algebraic system, a methodology often used in this field.
   
\subsection{Importance}

Research done since 1900 has shown that the concept of central configuration opens a path towards understanding the $N$-body problem. Not only that it provides a method for finding periodic solutions, which Henri Poincar\'e deemed as a key towards untangling systems of differential equations, but it appears in various other circumstances. For instance, it was shown that when three or more bodies tend to a simultaneous collision, or when they scatter to infinity, they do so tending asymptotically to a central configuration, \cite{Saari}, \cite{Saari2}.

However, finding central configurations is far from easy. Basic questions related to them are often difficult to answer. One such question is known as the Wintner-Smale conjecture, which became notorious after Stephen Smale placed it sixth on his list of open problems for the 21st century, \cite{Smale}. The problem asks whether, for given $N$ positive masses, the number of planar central configurations is finite or not. So far, the conjecture is solved only for $N=3,4,$ and $5$, see \cite{Moeckel} and \cite{Albouy}. In all these cases the answer is that the set of central configurations is finite. But it is possible that for more than five bodies this set is infinite. If so, it may be countable or contain a continuum, as it actually happens when some masses are negative or charges are embedded in the system, \cite{Alfaro}, \cite{Roberts}.

\subsection{Brief history}

We consider here the motion of $N$ point masses in spaces of constant Gaussian curvature $\kappa\ne 0$, namely spheres for $\kappa>0$ and hyperbolic spheres for $\kappa<0$. This problem stems from the work of J\'anos Bolyai and Nikolai Lobachevsky, done in the 1830s, who independently had the idea of generalizing celestial mechanics to hyperbolic space, being among the first to understand that the laws of physics are related to the geometry of the universe, \cite{Bolyai}, \cite{Lobachevsky}. The analytic form of the potential, given by the cotangent of the distance, was introduced in 1870 by Ernest Schering in hyperbolic space,  \cite{Schering1}, \cite{Schering2}, and in 1873 by Wilhelm Killing for spheres, \cite{Killing}. Heinrich Liebmann proved two properties that established this potential as the natural extension of the Newtonian model to spaces of constant curvature. At the turn of the 20th century, he showed that the cotangent potential in the case of the Kepler problem (the motion of one body about a fixed attractive centre) is a harmonic function in the 3-dimensional (but not in the 2-dimensional) case and that every bounded orbit is closed, \cite{Liebmann1}, \cite{Liebmann2}. The same properties are true in the classical problem, \cite{Bertrand}. Although attempts at other extensions of the Newtonian potential to spaces of constant curvature existed, they were short-lived. Robert Lipschitz, for instance, proposed such a model, but his solution to the Kepler problem involved elliptic integrals, so it could not be explicitly expressed, \cite{Lipschitz}.

More recently, work in this direction was pursued by the Russian school of celestial mechanics, especially for the equations describing the motion of two bodies, which unlike in the Euclidean case are not integrable, \cite{Kozlov}, \cite{Shchepetilov}. In the past few years the problem was intensely researched in the general case of $N$ bodies, using various forms of the equations of motion, both in extrinsic and intrinsic coordinates. The chosen topics orbited around finding new relative equilibria, as well as rotopulsators (the solutions that generalize the concept of homographic orbits) and establishing their properties, including various types of stability, \cite{Diacu01}, \cite{Diacu02}, \cite{Diacu03}, \cite{Diacu05}, \cite{Diacu07},  \cite{Diacu77}, \cite{Diacu78},  \cite{Diacu06}, \cite{Diacu08},  \cite{Diacu09}, \cite{Diacu10}, \cite{Diacu11},  \cite{Diacu12}, \cite{Diacu13}, \cite{Diacu-Popa}, \cite{Diacu14}, \cite{Garcia}, \cite{Martinez1}, \cite{Martinez2}, \cite{Mont},  \cite{Perez}, \cite{Tibboel1}, \cite{Tibboel2}, \cite{Tibboel3}, \cite{Zhu}.

One other reason for pursuing these topics is related to possible applications towards deciding whether space is elliptic, flat, or hyperbolic. This question was already asked by Lobachevsky and Gauss. The former used observations on the Earth's parallax, while the latter measured the angles of a triangle formed by three mountain peaks, apparently hoping to see whether their sum was below or above $\pi$ radians. But none of them succeeded to provide an answer since the observation and measurement errors were larger than the potential deviation of the physical space from zero curvature, \cite{Kragh}. Bernhard Riemann's advance in differential geometry was also motivated by this question in connection with the general relationship between physics and the geometry of the universe, \cite{Riemann}.
Many other attempts to solve this problem were made in the mean time, including the so-called boomerang experiment, which analyzed the cosmological background radiation, \cite{Ber}. All of them, however, failed to provide a definite answer on whether the physical space is flat or not.

A potential way to offer a solution to this problem would be to mathematically find stable orbits that exist in, say, flat space, but not in hyperbolic and elliptic space, and then seek them in the universe through astronomical observations. A successful attempt of this kind could determine the geometric nature of the physical space. In fact, a small step in this direction was already made by showing that the Lagrangian relative equilibria of the 3-body problem appear only in the Euclidean space for nonequal masses, while it is well known that such orbits exist in the solar system, such as the equilateral triangles formed by the Sun, Jupiter, and any of the Trojan asteroids, \cite{Diacu11}. But we don't know yet whether some quasiperiodic orbits of nonequal masses, for instance, come close to Lagrangian solutions in curved space, such that it could be hard to distinguish between the two. Proving that such quasiperiodic orbits don't exist would offer a strong argument that our universe is flat. At this point, however, we don't seem to have the analytical tools to address this problem.

\subsection{Our goal}

In this paper we extend the concept of central configuration to the $N$-body problem in spaces of constant Gaussian curvature. Our idea was to find a formal definition that resembles the classical one. To achieve this goal we had to formulate first the correct definition of the moment of inertia for 3-spheres and hyperbolic 3-spheres, such that it agrees with the standard definition known in the Euclidean space. This step proved more difficult than we expected, also because of a terminological mixup that had occurred in the past few decades in the literature pertaining to the Newtonian $N$-body problem. A main obstacle was that, in the 3-dimensional case, the definition of the moment of inertia we considered suitable for our purposes did not match the one in the Euclidean space when the curvature takes the value zero. But in the end we found a way out with the help of the concept of locked inertia tensor used in geometric mechanics and thus clarified the semantic confusion that had occurred in recent years. 

We also wanted to develop some criteria for the existence of central configurations and apply them towards finding new classes of such mathematical objects. The reward was higher than expected when we understood that any central configuration on a 3-sphere delivers two classes of relative equilibria, whereas any central configuration on hyperbolic 3-spheres provides three such classes. Unlike in the Euclidean case, however, central configurations do not lead to homothetic orbits, in general. The loss of this property is not only because spheres and hyperbolic spheres are not vector spaces, so the concept of similarity doesn't make much sense, but also for dynamical reasons. In Euclidean space, bodies released from a central configuration with zero initial velocities collide simultaneously. While this happens in some highly symmetric cases in curved space as well, it doesn't happen in general. For example, for fixed points on spheres, which are central configurations, the bodies don't move at all.

We also included in this first paper on central configurations of the curved $N$-body problem a complete proof that for any masses on spheres and hyperbolic spheres central configurations exist. Finally, we added some results on the number of geodesic central configurations, in the spirit of the classical theorem proved by Forest Ray Moulton in the classical case, \cite{Moulton}.

\subsection{Summary and organization}

We will further summarize our results in the context of how the rest of this paper is organized. In Section 2 we discuss the concept of moment of inertia in Euclidean space and point out that some confusion occurred during the past few decades in celestial mechanics on what really this means. Our discussion is necessary for two reasons; first, we need to clarify the concept in Euclidean space such that we can find a way to define it in spaces of constant curvature; second, once we understand how to define it for nonzero curvature,  we need to recover the definition given in Euclidean space when the curvature tends to zero. In Section 3 we perform this task by starting from the notion of locked inertia tensor, which is a generalization of the moment of inertia for any manifold. After that we find the correct definitions for the moment of inertia for 3-dimensional spheres and hyperbolic spheres and see that they agree with the definition known in Euclidean space.

In Section 4 we introduce the equations of motion of the curved $N$-body problem and their integrals of motion. We point out that the value of the curvature is irrelevant when dealing with qualitative results and that only its sign matters. Therefore we can consider the motion of the particle system only on the unit sphere and the unit hyperbolic sphere, an approach we use for the rest of the paper. Section 5 is devoted to relative equilibria, solutions of the equations of motion for which the particle system behaves like a rigid body. We show that there are five classes of relative equilibria on 3-dimensional spheres and hyperbolic spheres, as they naturally follow from the isometry groups of these spaces. We end this section with some examples of relative equilibria, which suggest that we can recover these solutions from configurations we take at some given time instant.  In Section 6 we then develop two criteria for the existence of relative equilibria, one for the sphere and the other for the hyperbolic sphere, and establish the relationship between relative equilibria and the locked inertia tensor through the concept of mechanics systems with symmetry, first introduced by Smale, \cite{Smale70-1}. This relationship
confirms our definition for the moment of inertia.

These results prepare us for what follows in Section 7, where we can finally define the concept of central configuration using the two previous criteria proved for the existence of relative equilibria. Unlike in Euclidean space, we can introduce the new class of special central configurations on the sphere (as opposed to what we
call ordinary central configurations), which stems from the fact that fixed-point solutions occur in this case. No such central configuration exist on the hyperbolic sphere. We also define the new notions of geodesic, $\S^2, \S^3, \H^2$, and $\H^3$ central configurations and find some of their properties. An important tool for classifying central configurations is that of equivalence classes, which we introduce in Section 8, where we also 
prove several results about them, including ways to
reduce their study to convenient settings.

In Section 9 we prove two criteria for the existence of central
configurations, one in $\S^3$ and the other in $\H^3$, and
compute the value of the constant $\lambda$ involved in 
the definition of central configurations. Section 10 is devoted to proving the existence of central configurations in $\S^3$ and $\H^3$ for any given point masses. For this purpose we look at central configurations seen as critical points of the potential in spaces of constant curvature. We also extend here the Wintner-Smale conjecture from the Euclidean space to $\S^3$ and $\H^3$. In Section 11 we prove a result about central configurations that is in a way an analogue of the centre of mass property known in the classical case. Section 12 presents many examples of central configurations and discusses the relative equilibria that correspond to them. The investigation of whether Moulton's theorem about the number of collinear central configurations in Euclidean space can be extended to spaces of nonzero constant curvature is the subject of Section 13. We show that the theorem is true in $\H^3$, but fails to generalize to $\S^3$, where even the case of two bodies leads to a complicated count. Finally, Section 14 draws some conclusions and maps some further directions of research.

\section{The moment of inertia in Euclidean space}

In this section we will discuss the notion of moment of inertia in Euclidean space, aiming to find a proper definition of this concept for an $N$-body system in spaces of constant curvature, a goal we will achieve in the next section. At this stage we do not need any equations of motion, since the moment of inertia does not depend on them. The reason for dealing with this issue here is related to the fact that we will use this concept later in the definition of  central configurations.

\subsection{The physical concept}

The moment of inertia first appeared under this name in one of Euler's works of 1765, \cite{Eu}. On page 166, he wrote in Latin: ``Momentum inertiae corporis respectu eujuspiam axis est summa omnium productorum, quae oriuntur, si singula corporis elementa per quadrata distantiarum suarum ab axe multiplicentur.'' The term apparently made it into dictionaries sometime between 1820 and 1830, \cite{dictionary}. In the spirit of Euler, we can define this concept as follows.

\begin{definition}\label{moment-of-inertia}
The moment of inertia is the sum of the products of the mass and the square of the perpendicular distance to the axis of rotation of each particle in a body rotating about an axis.
\end{definition} 

According to the above definition, given above for a rigid body, the moment of inertia $I$ for a system of $N$ point masses, $m_1,\dots, m_N$, relative to the $z$-axis in some $xyz$-coordinate system of the Euclidean space $\mathbb R^3$, must be of the form
\begin{equation}\label{short}
I=\sum_{i=1}^N m_i(x_i^2+y_i^2),  
\end{equation}
where the position of the body $m_i$ is given by the vector
${\bf q}_i=(x_i,y_i,z_i)$. The moment of inertia has the same expression \eqref{short} if we restrict the motion to the plane $\mathbb R^2$ and assume that the rotation takes place about the origin of some $xy$-coordinate system, where the position vector for the body $m_i$ is now ${\bf q}_i=(x_i,y_i)$. 

In celestial mechanics, as long as the motion is restricted to $\mathbb R^2$, the moment of inertia is taken as in \eqref{short} or, sometimes, as half this quantity.
We will soon clarify the reason for which some authors introduce the factor $\frac{1}{2}$, but it is more important for now to note that in celestial mechanics  the moment of inertia is taken in $\mathbb R^3$ as
\begin{equation}\label{long}
I=\sum_{i=1}^n m_i(x_i^2+y_i^2+z_i^2)
\end{equation} 
or as half this quantity. The usual physical interpretation of formula \eqref{long} given in the field is that the moment of inertia provides a crude measure for the distribution of the bodies in space, with $I=0$ at total collision and $I$ large if at least one body is far away from the others. 
So not only that there is no match between Definition \ref{moment-of-inertia} and formula \eqref{long}, but the
celestial mechanics literature never hints at any connection
between the moment of inertia thus defined and the rotation of the bodies about an axis.

We thought that we might find a reason for this mismatch in the original works where formula \eqref{long} appeared. The moment of inertia for the classical $N$-body problem has been historically known for its presence in the Lagrange-Jacobi equation,
\begin{equation*}
\ddot I=(2\alpha+4)U+4h,
\end{equation*}
where $I$ is defined as in \eqref{long}, $U$ is the force function (i.e.\ the negative of the potential energy),
\begin{equation*}
U\colon\mathbb R^{3N}\to(0,\infty),\ \ U({\bf q}_1,\dots, {\bf q}_N)=\sum_{1\le i<j\le N}\frac{m_im_j}{|{\bf q}_i-{\bf q}_j|^\alpha},
\end{equation*}
$h$ is the energy constant, and $\alpha>0$ is also a constant. The physical units are chosen such that the gravitation constant is 1. Since the right hand-side of the Lagrange-Jacobi formula has a factor of 2, some researchers in celestial mechanics prefer to introduce the factor $\frac{1}{2}$
in the definition of $I$, but this detail is irrelevant. So a good place to start our attempt at answering the above question was the first work that contained the Lagrange-Jacobi equation.

\subsection{Jacobi's approach}

In the winter semester of 1842-43 at the University of K\"onigsberg in East Prussia, Carl Gustav Jacobi gave a lecture series on the $N$-body problem, which was very well received, so he published it as a book entitled ``Vorlesungen \"uber Dynamik'' (Lectures on Dynamics) in 1848, \cite{Jacobi}. On page 22,  the Lagrange-Jacobi equation appears for the first time. To write this relation he used the quantity $\sum m_ir_i^2,$ where he took $r_i^2=x_i^2+y_i^2+z_i^2.$ He never attached a name to this sum, as he did for other important concepts, such as kinetic energy, which he called ``lebendige Kraft'' (living force). Between pages 22 and 24 he referred to  $\sum m_ir_i^2$ as ``Ausdruck'' (expression), ``Summe'' (sum), or ``Gr\"osse'' (quantity), but  never hinted that it has anything to do with the moment of inertia defined in physics. Recall that this concept had been defined in 1765 and was already in dictionaries around 1830, so Jacobi should have been aware of it by the time of his lectures.

In the first paragraph on page 24, he mentioned that, at the origin of the coordinate system, $\sum m_ir_i^2$ reaches its minimum value and, when $\sum m_ir_i^2$ is constant, the bodies can be thought of lying on the same sphere. So he  formulated there our current physical interpretation of the moment of inertia in celestial mechanics as a crude measure of the bodies' distribution in space. And this is all he wrote relative to $\sum m_ir_i^2$. It is thus reasonable to think that he made no connection between this expression and the the rotation of the bodies about a fixed axis.

\subsection{Wintner's terminology}

A century later,  Aurel Wintner published the first edition of his influential book on the analytical foundations of celestial mechanics, updated in a second edition that appeared in 1947, \cite{Wintner}. On page 234, the quantity $J=\sum m_i\xi_i^2$ was introduced (with $\xi_i$ having the same meaning as Jacobi's $r_i$ mentioned above), which finally bears a name; he called it the polar inertia momentum. In modern parlance, the {\it polar moment of inertia}, or the {\it polar moment of area}, is a quantity used to predict an object's resistance to torsion. Physicists warn, however, that the polar moment of inertia should not be confused with the moment of inertia, which characterizes an object's angular acceleration due to torque. So though related, the concepts of torque and torsion mean different things.

\subsection{More recent developments}

Since the publication of Wintner's book, researchers in celestial mechanics got apparently mixed up in terminology. Though the two physical concepts are identical in the classical $N$-body problem as long as $I$ is defined in the plane $\mathbb R^2$, in $\mathbb R^3$ we must distinguish between the polar moment of inertia, \eqref{long}, and the moment of inertia, \eqref{short}. This remark is important to us for reasons related to the definition we will give for central configurations in spaces of constant curvature and to the fact that we can recover the classical definition when the curvature tends to zero.

In spite of a misleading terminology, the polar moment of inertia was understood in terms of a rotation when considered in the context of relative equilibria (orbits that maintain constant mutual distances between the bodies all along the motion) defined by central configurations, as we will explain in a later section. But the central configurations leading to relative equilibria  must be planar, (see \cite{Wintner}, p.\ 287). As there are no spatial relative equilibria in $\mathbb R^3$, the mixup between concepts was harmless. In the next section, we will provide and justify the correct definition of the moment of inertia for spheres and hyperbolic spheres, and later find another way to back up our findings.

\section{The moment of inertia in spaces of constant curvature}\label{mi}

In this section we will obtain the expression of the moment of inertia on spheres and hyperbolic spheres using the language of geometric mechanics, \cite{Abraham}, \cite{Marsden}, \cite{Ratiu}.  We will obtain the same expression for the moment of inertia for different types of rotations,  as expected from the considerations of the previous section. 

\subsection{The locked inertia tensor}

To define the moment of inertia in spaces of constant curvature, we will apply the more general concept of locked inertia tensor, introduced in \cite{Marsden}. For this purpose, consider as a configuration space a manifold $M$ endowed with an inner product   $\ll \,,\,\gg_{TM}$ on its tangent bundle $TM.$
Let $G$ be a Lie group that acts on $M$. Denote by $\mathfrak{g}$ the Lie algebra of $G$.  Each $\boldsymbol{\xi} \in \mathfrak g$ generates  a vector field on $M$ as follows: Write the action of $g\in G$ on a column vector $\q\in M$ simply as $g\q$; the vector at $\q$, denoted by $\boldsymbol{\xi}_M(\q)$,  is obtained  by differentiating $g\q$ with respect to $g$ in the direction of $\boldsymbol{\xi}$ at $g=e$. Explicitly, 
\begin{equation*}
\boldsymbol{\xi}_M (\q) := \frac{d}{dt}\Bigg|_{t=0} \left(\exp(\boldsymbol{\xi}t) \q   \right).
\end{equation*} 
Notice that the integral curves of this vector field are in fact group orbits of the $G$ action on $M.$ Denote by $\mathfrak g^*$ the linear space dual to $\mathfrak g$. 
For each $ \q\in M$,  the locked inertia tensor  is  the linear map
\begin{equation}\label{I}
\mathbb I(\q) \colon\mathfrak g\to\mathfrak g^*,\ \ 
\left<\I(\q) \boldsymbol{\xi},\boldsymbol{\eta} \right>_\mathfrak{g}= \ll \boldsymbol{\xi}_M (\q) \,,\boldsymbol{\eta}_M  (\q)  \gg_{ TM},  
\end{equation} 
where $\boldsymbol{\xi}, \boldsymbol{\eta}\in\mathfrak g$, and  $\left<\cdot, \cdot\right>_\mathfrak{g}$ is the natural pairing between $\mathfrak g$ and $\mathfrak g^*$.  Recall that the natural pairing between 
a vector space $V$ and its dual $V^*$ is a real number,
$\left<f,v\right>_V:=f(v)$, for each $f\in V^*$ and $v\in V$,
where $f(v)$ is defined in a natural way, specific to each vector space $V$.\\

For our purpose we consider the manifold $M$ embedded in a higher dimensional inner product vector space, take $G$ to be a matrix Lie group acting on the vector space, and understand  the action of the matrix Lie  group $G$ on $M$ as the induced action. We denote by $\q$ both the (column) vectors in the embedding inner-product space and  their representation in the embedded space $M$. Then it is easy to see that the vector field generated by $\boldsymbol{\xi} \in \mathfrak g$ at $\q$ is simply $\boldsymbol{\xi}\q$, i.e., the product of the matrix $\boldsymbol{\xi}$ with the column vector $\q$. 

The manifolds we are interested are embedded in either the standard Euclidean space, $\mathbb{R}^{4}$, or the Minkowski space,  $\mathbb{R}^{3,1}$. We regard these two spaces as $\R^4$, each endowed with its own inner product. More precisely, for some two vectors $\q_1=(x_1,y_1,z_1,w_1)^T$ and $\q_2=(x_2,y_2,z_2,w_2)^T$ in $\R^4$ or $\R^{3,1}$, the  inner products are given by
    \[ \q_1\cdot \q_2 = x_1x_2+y_1y_2+z_1z_2 +\sigma w_1w_2, \]
 where $\sigma=1$ for $\R^4$  and $\sigma=-1$  for $\R^{3,1}$. Then the family of manifolds  are 
 \[\M_\kappa^3:=
             \{(x,y,z,w)^T\in\mathbb R^4\ \! |\ \!
             x^2+y^2+z^2+\sigma w^2=\kappa,\ \kappa \ne 0\},\]
             with $w>0$ for $\kappa<0$.
For $\kappa>0$, the manifolds  are 3-spheres, which we denote by $\S_\kappa^3$, whereas for $\kappa<0$, the manifolds  are  hyperbolic 3-spheres, which we  denote by $\H_\kappa^3$. Let
\begin{align*}
T_{\q}\mathbb{M}_\kappa^3=\{\v=(v_x, v_y, v_z, v_w)^T\,|\,  x v_x +   y v_y + z v_z +\sigma w v_w =0  \}
\end{align*} 
be the tangent space to $\M_\kappa^3$ at  $\q=(x,y,z, w)^T \in \mathbb{M}_\kappa^3$, 
and let $m>0$ be the mass of a point particle at $\q$ moving with velocity $\v=(v_x, v_y, v_z,v_w)^T\in T_\q\mathbb{M}_\kappa^3$. We  introduce yet another inner product,   
\begin{equation}\label{dotprod-3-bis}
\ll \v \,,\v\gg := m\, \v\cdot  \v= m(v_x^2+v_y^2+v_z^2+\sigma v_w^2).
\end{equation} 

The matrix Lie groups acting on $\M_\kappa^3$ are the orthogonal groups $SO(4)$, for $\kappa>0$, and $SO(3,1)$, for $\kappa<0$. 
 An element of $SO(4)$  is of the form $PAP^{-1}$, with  $P\in SO(4)$ and 
 $$
 A= \begin{bmatrix}
  \cos\alpha \theta & -\sin \alpha \theta&0 &0\\
  \sin \alpha \theta&\cos\alpha \theta&0&0\\
  0&0&\cos\beta \theta & -\sin \beta \theta\\
   0&0&\sin \beta \theta&\cos\beta \theta
  \end{bmatrix},
  $$
 where $\a$, $\b\in \R$. We call these rotations positive elliptic-elliptic if  $\a \ne0$ and $\b\ne 0$, and positive elliptic if only one of them is zero. The above description is a generalization to $\S_\kappa^3$ of Euler's principle axis theorem for 2-spheres. Note that the reference to a fixed axis is, from the geometric point of view, far from suggestive in $\R^4$. 
 
  An element of  $SO(3,1)$ is of the form
  $PBP^{-1}$ or $PCP^{-1}$, with $P\in SO(3,1)$, 
$$
 B= \begin{bmatrix}
  \cos\a \theta & -\sin \a \theta&0 &0\\
  \sin \a \theta&\cos\a \theta&0&0\\
  0&0&\cosh\b \theta & \sinh \b \theta\\
   0&0&\sinh \b \theta&\cosh\b \theta
  \end{bmatrix},\ 
 C= \begin{bmatrix}
   1&0&0&0\\
   0&1&-\eta \theta&\eta \theta\\
   0&\eta \theta&1-\eta \theta^2/2&\eta \theta^2\\
   0&\eta \theta&-\eta \theta^2&1+\eta \theta^2/2
   \end{bmatrix},
   $$
where $\a$, $\b,\ \eta \in \R$. We call these rotations negative elliptic for $\a\ne 0$ and $\b=0$, negative hyperbolic for $\a= 0$ and $\b\ne 0$, negative elliptic-hyperbolic for $\a\ne 0$ and $\b\ne0$, and parabolic for $\eta\ne0$. The above description is a generalization to $\H_\kappa^3$ of the Euler's principle axis theorem for hyperbolic 2-spheres.
 
If $\mathfrak{so}(4)$ and $\mathfrak{so}(3,1)$ are the Lie algebras corresponding to the Lie groups $SO(4)$ and $SO(3,1)$, respectively, we can easily check that 
 $$A=\exp ( \boldsymbol{\xi}_1 \theta),\ \ \! B=\exp ( \boldsymbol{\xi}_2\theta),\ \ \!  C=\exp ( \boldsymbol{\xi}_3\theta), $$
 where $\boldsymbol{\xi}_1\in \mk{so}(4)$, $\boldsymbol{\xi}_2$, $\boldsymbol{\xi}_3$ $\in \mk{so}(3,1)$, with 
 \[    \boldsymbol{\xi}_1 = \begin{bmatrix}0&-\alpha&0&0\\
  \alpha&0&0&0\\ 0&0&0&-\beta\\0&0&\beta&0
  \end{bmatrix},  \ \  \boldsymbol{\xi}_2 = \begin{bmatrix}0&-\alpha&0&0\\
     \alpha&0&0&0\\ 0&0&0&\beta\\0&0&\beta&0
     \end{bmatrix},   \ \ \boldsymbol{\xi}_3 =  \begin{bmatrix}0&0&0&0\\
        0&0&-\eta &\eta \\ 0&\eta &0&0\\0&\eta &0&0
        \end{bmatrix}.  \]
 For our purpose, we compute the locked inertia tensor associated to the Lie subalgebras generated by $\boldsymbol{\xi}_1$ in $\mathfrak{so}(4)$ and by  $\boldsymbol{\xi}_2$ in $\mathfrak{so}(3,1)$. The reason why we do not compute it for $\boldsymbol{\xi}_3$ will become clear soon.

\subsection{Locked inertia tensor  in $\S^3_\kappa$}
Consider two elements, $a\boldsymbol{\xi}_1$ and $b\boldsymbol{\xi}_1$,  in the 1-dimensional Lie sub-algebra  $\mathfrak{so}(4)_{\boldsymbol{\xi}_1}\simeq\R$.  Then, obviously,
we have 
\begin{equation*}
a \boldsymbol{\xi}_1\q = 
\begin{bmatrix}
 a(-\a y, \a x, -\b w, \b z)
\end{bmatrix}^T. 
\end{equation*}
The locked inertia tensor \eqref{I} has now the form  
$$
\mathbb I_{\boldsymbol{\xi}_1}:\mathbb{S}_\kappa^3 \to {\mathcal{L}}(\mathfrak{so}(4)_{\boldsymbol{\xi}_1}, \mathfrak{so}(4)_{\boldsymbol{\xi}_1}^*), \ \ \
\left<\mathbb{I}_{\boldsymbol{\xi}_1}(\q) a\boldsymbol{\xi}_1, b\boldsymbol{\xi}_1 \right>_{\mathfrak{so}(4)_{\boldsymbol{\xi}_1}} =abm\boldsymbol{\xi}_1\q\cdot \boldsymbol{\xi}_1 \q,
$$
and  the natural pairing between  $\mathfrak{so}(4)$ and $\mathfrak{so}(4)^*$ is
$$
\left<\boldsymbol{\xi},\boldsymbol{\eta}\right>_{\mathfrak{so}(4)_{\boldsymbol{\xi}_1}}:=\frac{1}{2}{\rm tr}(\boldsymbol{\xi}^T \boldsymbol{\eta}).
$$
Then 
\begin{equation*}
\left<\mathbb{I}_{\boldsymbol{\xi}_1}(\q) a\boldsymbol{\xi}_1, b\boldsymbol{\xi}_1 \right>_{\mathfrak{so}(4)_{\boldsymbol{\xi}_1}} = ab\mathbb{I}_{\boldsymbol{\xi}_1}(\q)(\a^2+\b^2) , 
\end{equation*}
and 
\begin{equation*} 
  \begin{split}
  abm\boldsymbol{\xi}_1\q\cdot \boldsymbol{\xi}_1 \q &=ab m \left(\a^2(x^2+y^2) +\b^2(z^2+w^2)\right)\\
  &= ab m \left(\a^2(x^2+y^2) +\b^2\kappa ^{-1}- \b^2(x^2+y^2)  \right)\\
  &=ab m(\a^2-\b^2)(x^2+y^2) +ab m \b^2\kappa^{-1},
  \end{split}
  \end{equation*}
therefore the locked  inertia tensor associated with positive elliptic-elliptic rotations on $\mathbb{S}_\kappa^3$ is given by
\begin{equation}\label{I1}
\mathbb{I}_{\boldsymbol{\xi}_1}(\q)= \frac{m(\a^2-\b^2)(x^2+y^2)}{\a^2+\b^2 } +\frac{m \b^2\kappa^{-1}}{\a^2+\b^2 } .
\end{equation}
Notice that by letting $\b=0$, we get the locked inertia tensor associated with positive elliptic rotations,
\begin{equation*}
m(x^2+y^2),
\end{equation*}
which differs from the above only in a multiplicative coefficient and and an additive constant.

\subsection{Locked inertia tensor  in $\H^3_\kappa$}

Consider two elements, $a\boldsymbol{\xi}_2$ and $b\boldsymbol{\xi}_2$,  in the 1-dimensional Lie subalgebra  $\mathfrak{so}(3,1)_{\boldsymbol{\xi}_2}\simeq\R$.  Then obviously, 
\begin{equation*}
a \boldsymbol{\xi}_2\q = 
\begin{bmatrix}
 a(-\a y, \a x, \b w, \b z)
\end{bmatrix}^T. 
\end{equation*}
The locked inertia tensor \eqref{I} has now the form  
$$
\mathbb I_{\boldsymbol{\xi}_2}:\mathbb{S}_\kappa^3 \to {\mathcal{L}}(\mathfrak{so}(3,1)_{\boldsymbol{\xi}_2}, \mathfrak{so}(3,1)_{\boldsymbol{\xi}_2}^*), \ \ \
\left<\mathbb{I}_{\boldsymbol{\xi}_2}(\q) a\boldsymbol{\xi}_2, b\boldsymbol{\xi}_2 \right>_{\mathfrak{so}(3,1)_{\boldsymbol{\xi}_2}} =abm\boldsymbol{\xi}_2\q\cdot \boldsymbol{\xi}_2 \q,
$$
and  the natural pairing between  $\mathfrak{so}(3,1)$ and $\mathfrak{so}(3,1)^*$ is
$$
\left<\boldsymbol{\xi},\boldsymbol{\eta}\right>_{\mathfrak{so}(3,1)_{\boldsymbol{\xi}_2}}:=\frac{1}{2}{\rm tr}(\boldsymbol{\xi}^T \boldsymbol{\eta}).
$$
Then 
\begin{equation*}
\left<\mathbb{I}_{\boldsymbol{\xi}_2}(\q) a\boldsymbol{\xi}_2, b\boldsymbol{\xi}_2 \right>_{\mathfrak{so}(3,1)_{\boldsymbol{\xi}_2}} = ab\mathbb{I}_{\boldsymbol{\xi}_2}(\q)(\a^2+\b^2), 
\end{equation*}
and 
\begin{equation*} 
  \begin{split}
  abm\boldsymbol{\xi}_2\q\cdot \boldsymbol{\xi}_2 \q &=ab m \left(\a^2(x^2+y^2) +\b^2(-z^2+w^2)\right)\\
  &= ab m \left(\a^2(x^2+y^2) -\b^2\kappa ^{-1}+ \b^2(x^2+y^2)  \right)\\
  &=ab m(\a^2+\b^2)(x^2+y^2) -ab m \b^2\kappa^{-1},
  \end{split}
  \end{equation*}
therefore the locked  inertia tensor associated with negative elliptic-hyperbolic rotations in $\mathbb{H}_\kappa^3$ is given by
\begin{equation}\label{I2}
\mathbb{I}_{\boldsymbol{\xi}_2}(\q)= m(x^2+y^2) -\frac{m \b^2\kappa^{-1}}{\a^2+\b^2}.
\end{equation}
Notice that by letting $\b=0$, we get the locked inertia tensor associated with negative elliptic rotations,
\begin{equation*}
m(x^2+y^2),
\end{equation*}
and by letting $\a=0$, we get the locked inertia tensor associated with negative hyperbolic rotations,
\begin{equation*}
m(x^2+y^2)-m\kappa^{-1}, 
\end{equation*}
which differ from the above one, as in the case of the sphere, only in a multiplicative coefficient and an additive constant.

\subsection{Definition of the moment of inertia}

We can now end this section with the following natural definition of the moment of inertia for the $N$-body problem in spaces of constant Gaussian curvature.

\begin{definition}
Consider $N$ point masses, $m_1,\dots, m_N$, which move in $\mathbb M_\kappa^3$ under a law defined by a potential function, and assume that their configuration is given by the vectors
$ \q_i=(x_i, y_i, z_i,w_i)^T\in \M_\kappa^3, \ i=\overline{1,N}$.
Then the moment of inertia of the particle system 
 is the function
\begin{equation}\label{MI}
I(\q):=\sum_{i=1}^Nm_i(x_i^2+y_i^2).
\end{equation}
\end{definition}

The moment of inertia and the locked inertia tensor thus differ from each other only in a multiplicative coefficient and an additive constant. We distinguish them from the polar moment of inertia, which is usually defined in celestial mechanics in the Euclidean case. Of course, it isn't necessary to define the moment of inertia in other directions than the ones used above since, according to Euler's fixed axis theorem, we have already covered all possibilities. We will return to this concept in Section \ref{central-config} in the context of central configurations, where we will see that formula \eqref{MI} is essential for our purposes.

\section{Equations of motion}
In this section we introduce the $N$-body problem in spaces of constant nonzero curvature, which we will refer to as the \emph{curved $N$-body problem}, in contrast to its analogue in Euclidean space, which we will call the \emph{Newtonian  $N$-body problem}. As in \cite{Diacu03}, we set the curved $N$-body problems in the unit 3-sphere and the unit hyperbolic 3-sphere as  Hamiltonian systems in the Euclidean space $\R^4$ and in the Minkowski space $\R^{3,1}$,
respectively, with holonomic constraints that restrict the motion of the bodies to these manifolds. 
   
Recall that $\R^4$ and $\R^{3,1}$ are endowed with different inner products: for two vectors, $\q_1=(x_1,y_1,z_1,w_1)^T$ and $\q_2=(x_2,y_2,z_2,w_2)^T$, they are given by
    \[ \q_1\cdot \q_2 = x_1x_2+y_1y_2+z_1z_2 +\sigma w_1w_2, \]
    where $\sigma=1$ for the Euclidean space and $\sigma=-1$  for the Minkowski space. Then the unite sphere $\S^3$ and the unit hyperbolic sphere $\H^3$ are
   \begin{equation*} 
    \begin{split}
   \S^3&:=
            \{(x,y,z,w)^T\in\mathbb R^4\ \! |\ \!
            x^2+y^2+z^2+w^2=1\}\,\ \  {\rm and }\\
  \H^3&:=
                      \{(x,y,z,w)^T\in\mathbb R^{4}\ \! |\ \!
                      x^2+y^2+z^2-w^2=-1, \ w>0\},
    \end{split}
    \end{equation*}    
 respectively. We can merge these two manifolds into 
   \[  \M^3:=
             \{(x,y,z,w)^T\in\mathbb R^{4}\ \! |\ \!
             x^2+y^2+z^2+\sigma w^2=\sigma, \ {\rm with}\ w>0\ {\rm for}\ \sigma=-1\}. \]

   Given the positive masses $m_1,\dots, m_N$, whose positions are described by  the
   configuration $\q=(\q_1,\dots,\q_N)\in(\M^3)^N$,
   $\q_i=(x_i,y_i,z_i,w_i)^T,\ i=\overline{1,N}$, we define the
   singularity set 
    \begin{equation}
      \D=\cup_{1\le i<j\le N}\{\q\in (\M^3)^N\ \! ; \ \! \q_i\cdot\q_j=\pm \sigma\}.
      \notag\end{equation} 
If $d_{ij}$ is the geodesic distance between the point masses $m_i$ and $m_j$, we define the force function $U$ ($-U$ being the potential function) on $(\M^3)^N\setminus\D$ as
   $$
   U(\q):=\sum_{1\le i<j\le N}m_im_j\ctn d_{ij},
   $$
   where $\ctn (x)$ stands for $\cot (x)$ in $\S^3$ and $\coth (x)$ in $\H^3$. We also introduce two more notations, which unify the trigonometric and hyperbolic  functions, 
   \[ \sn (x)= \sin(x) \ {\rm or}\ \sinh(x), \ 
   \ \csn (x)= \cos(x) \ {\rm or}\ \cosh(x). \] 
  Then the distance $d_{ij}$ is given by the expression
   $$
   d_{ij}:=\arccsn
   (\sigma \q_i\cdot\q_j),
   $$
   where $\arccsn (x)$ is the inverse function of $\csn(x)$. We define the kinetic energy as
   \[T(\p)= \sum_{1\le i\le N}m_i \dot \q_i \cdot \dot \q_i=\sum_{1\le i\le N}m_i^{-1}\p_i\cdot \p_i,   \] 
   where $\p_i:=m_i\dot\q^T_i\in \R ^{4*}$, a row vector,  is the momentum of this system, and  $\R ^{4*}$ is endowed with two  inner products  induced from the two inner products in the  linear space dual to $\R^4$, i.e.\ for $\p_i=(u_{xi}, u_{yi},u_{zi}, u_{wi})$, $i=1,2$, 
   \[ \p_1 \cdot \p_2=u_{x1}u_{x2}+u_{y1}u_{y2} +u_{z1}u_{z2}+\sigma u_{w1}u_{w2}.\]
   We also denoted the momentum of the particle system by
   $$
   \p=(\p_1,\dots,\p_N).
   $$
   
   Then the curved $N$-body problem is given by the Hamiltonian system on $T^*((\M^3)^N\setminus\D)$, with 
   \[  \ H(\q,\p):=T(\q,\p)-U(\q).   \]

  Let us derive the equations of motion for the Hamiltonian system on $\S^3$.  The Hamiltonian is
  \[ H= \sum_{1\le i\le N}m_i^{-1}\p_i\cdot \p_i- \sum_{1\le i<j\le N}m_im_j\cot d_{ij}.\]    
  Here $U$ is defined on $(\S^3)^N\setminus\D$, with the set of singularities  $\D=\D^-\cup \D^+$, where
  \[\D^-:= \cup_{1\le i<j\le N}\{\q\in(\S^3)^N: \ \! \q_i\cdot\q_j=-1\}, \]  \[\D^+:= \cup_{1\le i<j\le N}\{\q\in(\S^3)^N: \ \! \q_i\cdot\q_j=1\}.\] 
   Using constrained Lagrangian dynamics, we get the equations describing the motion of the bodies, 
   \begin{equation*}
   \begin{cases}
   \dot\q_i=m_i^{-1}\p^T_i\cr
   \dot\p^T_i=\nabla_{\q_i} U-m_i^{-1}(\p_i\cdot \p_i)\q_i=\nabla_{\q_i} U-m_i(\dot{\q}_i\cdot \dot{\q}_i)\q_i\cr
   \q_i\cdot\q_i=1,\ \ \p_i\q_i=0, \ \ i=\overline{1,N},
   \end{cases}
   \end{equation*}
  where  $\p_i \q_i$ stands for  the matrix multiplication of the $1\times 4$ matrix and the  $4\times 1$  matrix, and  $\nabla_{\q_i} U$  stands for  the gradient of  $U$ on the manifold $(\S^3)^N $. Note that  the gradient  can be interpreted as the attractive force on $\q_i$ produced by all the other particles and $-m_i^{-1}(\p_i\cdot \p_i)\q_i$ can be viewed as the constraint force keeping the particles on the sphere. Thus we  denote 
  $\nabla_{\q_i} U$  and  $\nabla_{\q_i} m_im_j \cot d_{ij}$ by $\F_i$ and  $\F_{ij}$, respectively, so we have
  \begin{equation*}
    \F_{ij}=\frac{-m_im_j}{\sin^2 d_{ij}}\nabla_{\q_i} d_{ij}=\frac{-m_im_j}{\sin^2 d_{ij}}\nabla_{\q_i} \cos ^{-1}\q_i\cdot \q_j = \frac{m_im_j}{\sin^3 d_{ij}}\nabla_{\q_i} \q_i\cdot \q_j.
  \end{equation*}   
The gradient of $\q_i\cdot \q_j$ on the manifold $(\S^3)^N$ can be computed as follows. We extend any function $f\colon(\S^3)^N \to \R $ to the ambient space with the help of a function $\bar{f}\colon(\R^4)^N\to\R$ with the property  $\bar{f}(\lambda \q)=\bar{f}(\q)$, for $\lambda >0$,   
$$
\bar{f}(\q)= f\left(\frac{\q_1}{\sqrt{\q_1\cdot \q_1}}, \cdots, \frac{\q_N}{\sqrt{\q_N\cdot \q_N}}\right), 
$$ 
which is a homogeneous function of degree zero. Let $\t{\nabla}$ be the gradient in the ambient space and $\frac{\partial}{\partial n_i}$ the unit  normal vector of the $i$-th unit sphere.  Since $\frac{\partial \bar{f}}{\partial r_i}=0$, we obtain  $(\t{\nabla}_{\q_i}\bar{f})|_{(\S^3)^N}=\nabla_{\q_i} f+ \frac{\partial \bar{f}}{\partial r_i}\frac{\partial}{\partial n_i}=\nabla_{\q_i} f$.   Thus 
     \begin{equation*} \begin{split} 
      \F_{ij}&= \frac{m_im_j}{\sin^3 d_{ij}}\t{\nabla}_{\q_i} \frac{\q_i\cdot \q_j}{\sqrt{\q_i\cdot \q_i}\sqrt{\q_j\cdot \q_j}}
      =\frac{m_im_j}{\sin^3 d_{ij}} \frac{\sqrt{\q_i\cdot \q_i}\sqrt{\q_j\cdot \q_j} \q_j- \q_i\cdot \q_j \frac{\sqrt{\q_j\cdot \q_j}}{\sqrt{\q_i\cdot \q_i}} \q_i}{(\sqrt{\q_i\cdot \q_i}\sqrt{\q_j\cdot \q_j})^2}\\
      &= \frac{m_im_j [\q_j-\cos d_{ij}\q_i]}{\sin^3 d_{ij}}.
   \end{split} \end{equation*}  
Thus the equations of motion for the curved $N$-body problem on $\S^3$ are
  \begin{equation*}
     \begin{cases}
     \dot\q_i=m_i^{-1}\p^T_i\cr
     \dot\p^T_i=\sum_{j=1,j\ne i}^N\frac{m_im_j [\q_j-\cos d_{ij}\q_i]}{\sin^3 d_{ij}} -m_i(\dot{\q}_i\cdot \dot{\q}_i)\q_i\cr
     \q_i\cdot\q_i=1,\ \ \p_i\q_i=0, \ \ i=\overline{1,N}.
     \end{cases}
     \end{equation*}
 
     
 \noindent{\bf {Gravitation law} in $\S^3$}.
 A mass $m_2$ at $\q_2 \in \S^3$ attracts another mass $m_1$ at $\q_1\in \S^3$ ($\q_1\ne \pm \q_2$) along the minimal geodesic connecting the two points with a force whose magnitude is $\frac{m_1m_2}{\sin^2 d_{12}}$. More precisely, 
 $$\F_{12}= \frac{m_1m_2 [\q_2-\cos d_{12}\q_1]}{\sin^3 d_{12}}.$$

  Similarly, we can derive the  equations of motion for the Hamiltonian system  on $\H^3$. The Hamiltonian is  
  \[ H=T(\q,\p)-U(\q)= \sum_{1\le i\le N}m_i^{-1}\p_i\cdot \p_i- \sum_{1\le i<j\le N}m_im_j\coth d_{ij}.\]    
    Here $U$ is defined on $(\H^3)^N\setminus\D$, and the set of singularities is 
      \[\D:= \cup_{1\le i<j\le N}\{\q\in(\H^3)^N: \ \! \q_i\cdot\q_j=1\}.\] 
   We interpret  $\nabla_{\q_i} U$  and  $\nabla_{\q_i} m_im_j \coth d_{ij}$ as $\F_i$ and  $\F_{ij}$ respectively. Similar computations lead to
    \begin{equation*}
      {\bf F}_{ij}= \frac{m_im_j [\q_j-\cosh d_{ij}\q_i]}{\sinh^3 d_{ij}},
      \end{equation*} 
    and  the equations of motion the curved $N$-body problem on $\H^3$ are
    \begin{equation*}
       \begin{cases}
       \dot\q_i=m_i^{-1}\p^T_i\cr
       \dot\p^T_i=\sum_{j=1,j\ne i}^N\frac{m_im_j [\q_j-\cosh d_{ij}\q_i]}{\sinh^3 d_{ij}} +m_i(\dot{\q}_i\cdot \dot{\q}_i)\q_i\cr
       \q_i\cdot\q_i=-1,\ \ \p_i\q_i=0, \ \ i=\overline{1,N}.
       \end{cases}
       \end{equation*}
       

\noindent{\bf{Gravitation law in $\H^3$}}.
 A mass $m_2$ at $\q_2 \in \H^3$ attracts another mass $m_1$ at $\q_1\in \H^3$ ($\q_1\ne \q_2$)   along the minimal geodesic connecting the two points with  a force  whose magnitude is $\frac{m_1m_2}{\sinh^2 d_{12}}$. More precisely, 
 $$\F_{12}= \frac{m_1m_2 [\q_2-\cosh d_{12}\q_1]}{\sinh^3 d_{12}}.$$  

Using the functions $\sn (x)$ and $\csn(x)$ introduced earlier, we  can blend the two systems of equations into one system in $(\M^3)^N\setminus\D$,
  \begin{equation}\label{equation}
   \begin{cases}
   \dot\q_i=m_i^{-1}\p^T_i\cr
   \dot\p^T_i=\sum_{j=1,j\ne i}^N\frac{m_im_j [\q_j-\csn d_{ij}\q_i]}{\sn^3 d_{ij}}-\sigma m_i(\dot{\q}_i\cdot \dot{\q}_i)\q_i\cr
   {\bf q}_i\cdot{\bf q_i}=\sigma, \ \ \p_i\q_i=0, \ \ i=\overline{1,N}.
   \end{cases}
   \end{equation}

\begin{remark}  
Some researchers studied the curved $N$-body problem on spheres and hyperbolic spheres with curvature $\kappa\ne \pm1$ \cite{Kilin}, i.e., in 
\[
\S_\kappa^3=
\{(x,y,z,w)\in \R^4\ \! |\ \!
x^2+y^2+z^2+w^2=\kappa^{-1}\}\, \ \ \kappa >0,  \]
 \[
 \H_\kappa^3=
 \{(x,y,z,w)\in \R^4\ \! |\ \!
 x^2+y^2+z^2-w^2=\kappa^{-1}, w>0\}\, \ \ \kappa <0.  \]
This is not necessary since it has been shown in \cite{Diacu03} that there are coordinate and time-rescaling transformations, 
     \begin{equation*}
       \q_i = |\kappa|^{-1/2}{\bf r}_i, \ i=\overline{1,N}\ \  {\rm  and}\ \     \tau = |\kappa|^{3/4} t,
       \end{equation*}
which bring the systems from $\S_\kappa^3$ and $\H_\kappa^3$ to systems to $\S^3$ and $\H^3$, respectively.    
 \end{remark}

\subsection{Total angular momentum integrals}
The Hamiltonian function is invariant under the action of $SO(4)$ and $SO(3,1)$ for motions in $\S^3$ and  $\H^3,$ respectively. These symmetries lead to six integrals, which stand for the generalized version of the usual total angular momentum conservation laws in $\mathbb{R}^3,$
\begin{align*}
\omega_{xy}&=\sum_{i=1}^Nm_i(\dot x_iy_i-x_i\dot y_i), &
\omega_{xz}&=\sum_{i=1}^Nm_i(\dot x_iz_i-x_i\dot z_i),\displaybreak[0]\\
\omega_{xw}&=\sum_{i=1}^Nm_i(\dot x_iw_i-x_i\dot w_i), & 
\omega_{yz}&=\sum_{i=1}^Nm_i(\dot y_iz_i-y_i\dot z_i),\displaybreak[0]\\
\omega_{yw}&=\sum_{i=1}^Nm_i(\dot y_iw_i-y_i\dot w_i), & 
\omega_{zw}&=\sum_{i=1}^Nm_i(\dot z_iw_i-z_i\dot w_i),
\end{align*}
as shown in \cite{Diacu03} and \cite{Diacu05}.  We will refer to them as \emph{angular momentum integrals.}

\section{Relative equilibria}\label{relativeequilibria}

In this section we introduce the relative equilibria of the curved $N$-body problem and classify these solutions into several classes. Although this notion was considered and analyzed in our previous work (see \cite{Diacu03}, \cite{Diacu05}), we
stir our presentation towards showing some patterns not seen before, which will allow us to later define the concept of central configuration. Relative equilibria are orbits of the form of $Q(t) \q(0)$, where $Q(t)$ is a one-parameter subgroup of the isometry group of the system. 
We need first to take a closer look at the isometric transformations of $\M^3$, the Lie groups $SO(4)$ and $SO(3,1)$, such that we can find a suitable definition for these solutions.

A one-parameter subgroup of $SO(4)$  is of the form $PA_{\alpha, \beta}(t)P^{-1}$, with  $P\in SO(4)$ and 
$$
A_{\alpha, \beta}(t)= \begin{bmatrix}
 \cos\alpha t & -\sin \alpha t&0 &0\\
 \sin \alpha t&\cos\alpha t&0&0\\
 0&0&\cos\beta t & -\sin \beta t\\
  0&0&\sin \beta t&\cos\beta t
 \end{bmatrix},
 $$
where $\a$, $\b\in \R$. As remarked in Section \ref{mi},  we call these rotations positive elliptic-elliptic if  $\a \ne0$ and $\b\ne 0$, and positive elliptic if only one of them is zero. 

 A one-parameter subgroup of  $SO(3,1)$ is of the form
 $PB_{\alpha, \beta}(t)P^{-1}$ or $PC_\eta (t)P^{-1}$, with $P\in SO(3,1)$,  and
$$
B_{\alpha, \beta}(t)= \begin{bmatrix}
 \cos\a t & -\sin \a t&0 &0\\
 \sin \a t&\cos\a t&0&0\\
 0&0&\cosh\b t & \sinh \b t\\
  0&0&\sinh \b t&\cosh\b t
 \end{bmatrix},$$
$$
C_\eta (t)= \begin{bmatrix}
  1&0&0&0\\
  0&1&-\eta t&\eta t\\
  0&\eta t&1-\eta t^2/2&\eta t^2\\
  0&\eta t&-\eta t^2&1+\eta t^2/2
  \end{bmatrix},
$$
where $\a$, $\b,\ \eta \in \R$. As remarked in Section \ref{mi},  the negative elliptic, negative hyperbolic, negative elliptic-hyperbolic and parabolic transformations correspond to $\a\ne 0$ and $\b=0$, $\a= 0$ and $\b\ne 0$, $\a\ne 0$ and $\b\ne0$, and $\eta\ne0$, respectively. We can easily check that 
$$A_{\alpha, \beta}(t)=\exp ( \boldsymbol{\xi}_1t),\ \ \! B_{\alpha, \beta}(t)=\exp ( \boldsymbol{\xi}_2t),\ \ \!  C_\eta (t)=\exp ( \boldsymbol{\xi}_3t), $$
where $\boldsymbol{\xi}_1\in \mk{so}(4)$, $\boldsymbol{\xi}_2$, $\boldsymbol{\xi}_3$ $\in \mk{so}(3,1)$, and 
\[    \boldsymbol{\xi}_1 = \begin{bmatrix}0&-\alpha&0&0\\
 \alpha&0&0&0\\ 0&0&0&-\beta\\0&0&\beta&0
 \end{bmatrix},  \ \  \boldsymbol{\xi}_2 = \begin{bmatrix}0&-\alpha&0&0\\
    \alpha&0&0&0\\ 0&0&0&\beta\\0&0&\beta&0
    \end{bmatrix},   \ \ \boldsymbol{\xi}_3 =  \begin{bmatrix}0&0&0&0\\
       0&0&-\eta &\eta \\ 0&\eta &0&0\\0&\eta &0&0
       \end{bmatrix}.  \]
       
It is easy to see that the curved $N$-body problem is invariant under the isometry group of $\M^3$,  which implies that 
for any $\phi$ in the isometry group,  $\left( \q(t),\p(t)\right)$ solves the curved $N$-body problem if and only if $\phi \left( \q(t),\p(t)\right)$ does.
Thus we do not lose anything  by  defining the concept of relative equilibrium for the curved $N$-body problem  with the three normal forms of orthogonal matrices.
To simplify the notation, we will denote initial positions without any argument and attach the argument $t$ to functions depending on time. 
\begin{definition}
Let $\q=(\q_1,\dots,\q_N)$ be a nonsingular initial configuration of the point particles of masses $m_1,\dots,m_N$, $N\geq 2$, on the manifold $\mathbb{M}^3$, where the initial position vectors are $\q_i=(x_i, y_i, z_i, w_i)^T$, $i=\overline{1,N}$. Then a solution of the form 
$$
\q(t)=Q(t)\q:= (Q(t)\q_1,\dots,Q(t)\q_N)
$$ 
of system (\ref{equation}), with $Q(t)$ being $A_{\alpha, \beta}(t)$, $B_{\alpha, \beta}(t)$, or $C_\eta(t)$, is called a  relative equilibrium. 
\end{definition}

It was shown in \cite{Diacu03} and \cite{Diacu05}  that there exist five types of relative equilibria: \emph{positive elliptic} ($\alpha=0$ or $\beta=0$, but not both, in $A_{\alpha, \beta}(t)$), \emph{positive elliptic-elliptic} ($\alpha \neq 0$,  $\beta \neq0$ in $A_{\alpha, \beta}(t)$), \emph{negative  elliptic} ($\alpha \neq 0$,  $\beta = 0$ in $B_{\alpha, \beta}(t)$),  \emph{negative hyperbolic} ($\alpha = 0$,  $\beta \neq 0$ in $B_{\alpha, \beta}(t)$ ), \emph{negative  elliptic-hyperbolic} ($\alpha \neq 0$,  
$\beta \neq 0$ in $B_{\alpha, \beta}(t)$), and there are no relative equilibria corresponding to $C_\eta(t)$. Consequently,  from now on we will
ignore the elements of the form $C_\eta(t)$ in $SO(3,1)$ and deal only with those of the form $B_{\alpha, \beta}(t)$. This is why we did not compute the locked inertia tensor associated to $\boldsymbol{\xi}_3$. 

\subsection{Examples of relative equilibria}

The existence of several classes of relative equilibria is proved in \cite{Diacu03} and \cite{Diacu05}.  We present here some particular examples. Some straightforward computations, or the criteria we give in Section \ref{criteria}, confirm their existence. The reason for presenting these examples will soon become clear.
\begin{example}
 On $\S^3$, let us place three equal masses $m_1=m_2=m_3=\frac{13\sqrt{39}}{512}$ at $\q=(\q_1,\q_2, \q_3),\ \q_j=(x_j,y_j,z_j,w_j)^T,\ j=1,2,3$, where
$$
x_j=\frac{1}{2}\cos  \beta_j,\ \ y_j=\frac{1}{2}\sin \beta_j,\ \ z_j=\frac{\sqrt{3}}{2},\ \ w_j=0,\ \  \beta_j = \frac{2\pi j}{3}.
$$
Then the computations show that $\q(t)=A_{1,0}(t)\q$ is a positive  elliptic relative equilibrium and  $\q(t)=A_{\sqrt{2},1}(t) \q$ is a positive  elliptic-elliptic relative equilibrium.
\end{example}
\begin{example}
 On  $\H^3$, let us place three equal masses $m_1=m_2=m_3=\frac{8\sqrt{2}}{9}$ at $\q=(\q_1,\q_2,\q_3),\ \q_i=(x_i,y_i,z_i,w_i)^T,\ i=1,2,3$, where
\begin{align*}
x_1&=0, & y_1&=0, & z_1&=0, & w_1&=1\\
x_2&=1, & y_2&=0, & z_2&=0, & w_2&=\sqrt{2}\\
x_3&=-1, & y_3&=0, & z_3&=0, & w_3&=\sqrt{2}.
\end{align*}
Then the computations show that $\q(t)=B_{1,0}(t)\q$ is a negative  elliptic relative equilibrium, $\q(t)=B(t)_{0,1}\q$ is a negative hyperbolic relative equilibrium, and $\q(t)=B_{1/2,\sqrt{3}/2}(t)\q$ is a negative  elliptic-hyperbolic relative equilibrium.
\end{example}
Notice that in each example the relative equilibria can be generated from the same initial configuration. Far from being a coincidence, this fact will be clarified in Section \ref{central-config}.

\section{Existence criteria for relative equilibria}\label{criteria}
In this section, we derive criteria for the existence of relative equilibria. They are equivalent with the criteria we gave in \cite{Diacu03} and \cite{Diacu05}, but differ significantly in form, and will be essential in defining the concept of central configuration. 

Let 
$$
\q=(\q_1,...,\q_N),\ \ \q_i=(x_i,y_i,z_i,w_i)^T,\ \ i=\overline{1,N},
$$ 
be a nonsingular configuration in $(\M^3)^N$ and $Q(t)\q$ a relative equilibrium, where $Q(t)$ is $A_{\alpha, \beta}(t)$ or  $B_{\alpha, \beta}(t)$. Again, to simplify the notation, we will denote initial positions and velocities without any argument and attach the argument $t$ to functions depending on time.

To rewrite the criteria proved in \cite{Diacu03} and \cite{Diacu05} in matrix form, we first substitute  $\q_i(t)=Q(t)\mathbf{q}_i,\ i=\overline{1,N}$, into equations (\ref{equation}) and obtain 
\begin{equation*} 
m_i\ddot{Q}(t)\q_i=\nabla_{\q_i} U(t)-\sigma m_i[\dot{Q}(t)\q_i\cdot\dot{Q}(t)\q_i]Q(t)\q_i, \ i=\overline{1,N}.
\end{equation*}
Since $d_{ij}$ is kept cosnstant during the motion, we have
$$
\nabla_{\q_i} U(t)= \sum_{j=1, j\neq i}^N Q(t)\frac{m_im_j [\q_j-\csn d_{ij}\q_i]}{\sn^3 d_{ij}}, 
\ \ i=\overline{1,N}.
$$
Multiplying to the left by $Q^{-1}(t)$ yields
\begin{equation}\label{rigidmotion}
m_iQ^{-1}(t)\ddot{Q}(t)\q_i=\nabla_{\q_i} U-\sigma m_i[\dot{Q}(t)\q_i\cdot\dot{Q}(t)\q_i]\q_i.
\end{equation}

\subsection{Criterion for relative equilibria in $\S^3$}

We can now prove the following criterion for the existence of relative equilibria in $\S^3$.

\begin{criterion} \label{criterion+}
Let $\q=(\q_1,\dots,\q_N), \q_i=(x_i,y_i,z_i,w_i)^T,\ \!  i=\overline{1,N}$,  be a nonsingular  configuration in $\S^3$. Then $A_{\alpha, \beta}(t)\q$ is a relative equilibrium if and only if this configuration satisfies the equations
 \begin{equation}\label{rel-crit+}  
 m_i(\beta^2-\alpha^2)
 \begin{bmatrix}
 x_i(w_i^2 +z_i^2)\\
 y_i(w_i^2 +z_i^2)\\
 -z_i(x_i^2 +y_i^2)\\
 -w_i(x_i^2 +y_i^2)
\end{bmatrix}=\nabla_{\q_i} U,\  i=\overline{1,N}.\end{equation}
 More precisely, the
relative equilibria we obtain are

(i) positive elliptic if $\alpha\ne 0$
and $\beta=0$ or if $\alpha=0$ and $\beta\ne 0$;

(ii) positive elliptic-elliptic if $\alpha\ne 0$ and $\beta\ne 0$.
\end{criterion}
\begin{proof}
Using the fact that $A_{\alpha, \beta}(t)=\exp ( \boldsymbol{\xi}_1t)$ and that $\exp(\boldsymbol{\xi}_1 t)$ and $\boldsymbol{\xi}_1$ commute, straightforward computations show that 
\begin{equation*}
\begin{split}
A^{-1}_{\alpha, \beta}(t)\ddot{A}_{\alpha, \beta}(t)&=
{\rm diag}(-\alpha^2, -\alpha^2, -\beta^2, -\beta^2),\\
\dot{A}_{\alpha, \beta}(t)\q_i\cdot\dot{A}_{\alpha, \beta}(t)\q_i 
&=\alpha^2(x_i^2+y_i^2)+\beta^2(z_i^2+w_i^2).
\end{split}
\end{equation*}
Substituting these expressions into equations (\ref{rigidmotion}), we obtain that
\begin{equation*} 
m_i
\begin{bmatrix}
-\alpha^2x_i\\
-\alpha^2y_i\\
-\beta^2z_i\\
-\beta^2w_i
\end{bmatrix}
=\nabla_{\q_i} U - m_i[\alpha^2(x_i^2+y_i^2) +\beta^2(z_i^2+w_i^2)] \begin{bmatrix}
  x_i
  \\y_i
  \\z_i
  \\w_i
\end{bmatrix},\ \ i=\overline{1,N}. 
\end{equation*} 
Using in the above equations the identity $\q_i \cdot \q_i=1$, we can conclude that
\begin{align*}
x_i\left[-\alpha^2 + \alpha^2(x_i^2+y_i^2) +\beta^2(z_i^2+w_i^2)\right] &= x_i(\beta^2-\alpha^2)(z_i^2+w_i^2),\\
y_i\left[-\alpha^2 + \alpha^2(x_i^2+y_i^2) +\beta^2(z_i^2+w_i^2)\right]&= y_i(\beta^2-\alpha^2)(z_i^2+w_i^2),\\
z_i\left[-\beta^2 + \alpha^2(x_i^2+y_i^2) +\beta^2(z_i^2+w_i^2)\right]&=  -z_i(\beta^2-\alpha^2)(x_i^2+y_i^2),\\
w_i\left[-\beta^2 + \alpha^2(x_i^2+y_i^2) +\beta^2(z_i^2+w_i^2)\right]&=  -w_i(\beta^2-\alpha^2)(x_i^2+y_i^2).
\end{align*}
Then we are led to equations \eqref{rel-crit+}, a remark that completes the proof.
\end{proof}

\subsection{Criterion for relative equilibria in $\H^3$}

We can now provide the following criterion for the existence of relative equilibria in $\H^3$.

\begin{criterion} \label{criterion-}
Let $\q=(\q_1,\dots,\q_N)$, $\q_i=(x_i,y_i,z_i,w_i)^T, \ i=\overline{1,N},$ be a nonsingular configuration in $\H^3$. Then $B_{\alpha, \beta}(t)\q$ is a relative equilibrium if and only if this configuration satisfies the equations
\begin{equation} \label{rel-crit-}
    -m_i(\alpha^2+\beta^2)
    \begin{bmatrix}
    x_i(w_i^2 -z_i^2)\\
    y_i(w_i^2 -z_i^2)\\
    z_i(x_i^2 +y_i^2)\\
    w_i(x_i^2 +y_i^2)
\end{bmatrix}=\nabla_{\q_i} U, \ \ i=\overline{1,N}. 
\end{equation} 
 More precisely, the relative equilibria we obtain are

(i) negative elliptic if $\alpha\ne 0$ and $\beta=0$; 

(ii) negative hyperbolic if $\alpha=0$ and $\beta\ne 0$;

(iii) negative elliptic-hyperbolic if $\alpha\ne 0$ and $\beta\ne 0$. 
\end{criterion}    
\begin{proof}
Using the fact that $B_{\alpha, \beta}(t)=\exp ( \boldsymbol{\xi}_2t)$ and that $\exp(\boldsymbol{\xi}_2 t)$ and $\boldsymbol{\xi}_2$ commute, straightforward computations show that
\begin{equation*}
\begin{split}
B^{-1}_{\alpha, \beta}(t)\ddot{B}_{\alpha, \beta}(t)
&=
{\rm diag}(-\alpha^2, -\alpha^2, \beta^2, \beta^2),\\
\dot{B}_{\alpha, \beta}(t)\q_i\cdot\dot{B}_{\alpha, \beta}(t)\q_i 
&=\alpha^2(x_i^2+y_i^2)-\beta^2(z_i^2-w_i^2).
\end{split}
\end{equation*}
Substituting these results into equations (\ref{rigidmotion}), we obtain
\begin{equation*}  
m_i
\begin{bmatrix}
-\alpha^2x_i\\
-\alpha^2y_i\\
\beta^2z_i\\
\beta^2w_i
\end{bmatrix}
=\nabla_{\q_i} U + m_i[\alpha^2(x_i^2+y_i^2) -\beta^2(z_i^2-w_i^2)] \begin{bmatrix}
  x_i
  \\y_i
  \\z_i
  \\w_i
\end{bmatrix},\ \ i=\overline{1,N}. 
\end{equation*} 
Using in the above equations the identity $\q_i \cdot \q_i=-1$, we can conclude that
\begin{align*}
x_i\left[-\alpha^2 - \alpha^2(x_i^2+y_i^2) +\beta^2(z_i^2-w_i^2)\right] &=x_i(\alpha^2+\beta^2)(z_i^2-w_i^2),\\
y_i\left[-\alpha^2 - \alpha^2(x_i^2+y_i^2) +\beta^2(z_i^2-w_i^2)\right]&=y_i(\alpha^2+\beta^2)(z_i^2-w_i^2),\\
z_i\left[\beta^2 - \alpha^2(x_i^2+y_i^2) +\beta^2(z_i^2-w_i^2)\right]&=-z_i(\alpha^2+\beta^2)(x_i^2+y_i^2),\\
w_i\left[\beta^2 - \alpha^2(x_i^2+y_i^2) +\beta^2(z_i^2-w_i^2)\right]&=-w_i(\alpha^2+\beta^2)(x_i^2+y_i^2).
\end{align*}
Then we are led to equations \eqref{rel-crit-}, a remark that completes the proof.
\end{proof}

\subsection{Relative equilibria and the locked inertia tensor}
In this subsection we take a slightly different point of view. We will apply a general theorem about mechanical systems with symmetry to obtain a new criterion for the existence of relative equilibria for the curved $N$-body problem and then show that this new criterion agrees with the ones proved in the last two subsections. 

Consider a mechanical system of the form $K+V$ ``kinetic plus potential energy"  on some  manifold $M$, where the kinetic energy $K$ is generated by the inner product  $\ll \,,\,\gg_{TM}$ on the tangent bundle of $M$. Let a Lie group  $G$ acting on $M$ preserve the kinetic energy $K$ and the potential  $V$. As mentioned earlier, for each $\boldsymbol{\xi}$ belonging to the Lie algebra $\mathfrak g$ of $G$, there is a vector field $\boldsymbol{\xi}_M$.  Denote by  $\boldsymbol{\xi}_M(\q)$  the vector at $\q\in M$. Then relative equilibria of the mechanical system are solutions of the equations of motion of the system, in the form of  $\exp(\boldsymbol{\xi}t)\q$, where $g\q$ means the action of $g\in G$ on $\q$. In other words, the relative equilibria are both solutions of the system and integral curves of the vector field $\boldsymbol{\xi}_M$. Then there is a theorem due to Smale, which states that relative equilibria can be found by determining, for each $\boldsymbol{\xi} \in \mathfrak{g}$ fixed, the critical points $\q(\boldsymbol{\xi})$ of the so-called effective (or augmented) potential (\cite{Marsden}, p.\ 80, \cite{Smale70-1}), namely the function
\begin{equation}
\label{effective-potential}
V_{\boldsymbol{\xi}}(\q):=V(\q)- \frac{1}{2} \ll \boldsymbol{\xi}_M(\q)\,, \boldsymbol{\xi}_M(\q) \gg_{TM} \, \equiv \,V(\q)- \frac{1}{2} \left< \mathbb{I} (\q)\, \boldsymbol{\xi}\,,  \boldsymbol{\xi} \right>_\mathfrak{g},
\end{equation}
where $\mathbb{I}$ is the locked inertia tensor defined in Section \ref{mi}.  Once such a critical point $\q(\boldsymbol{\xi})$ is found, the relative equilibrium is given by $\q(t) =  \exp(\boldsymbol{\xi} t)\q(\boldsymbol{\xi}).$ \\

The curved $N$-body problem is a $(K+V)$-type mechanical system on $(\M^3)^N$, with $V(\q)=-U(\q)$, and the kinetic energy is generated by the inner product
 \begin{equation*}
  \ll \u \,,\v\gg =\sum _{i=1}^N m_i \u_i \cdot \v_i = \sum _{i=1}^N m_i(u_{x_i}v_{x_i}+u_{y_i}v_{y_i}+u_{z_i}v_{z_i}+\sigma u_{w_i}v_{w_i}),
  \end{equation*}  
where $\u $, $\v\in T_\q(\M^3)^N$, $\u=(\u_1,\cdots, \u_N)$, $\v=(\v_1,\cdots, \v_N)$, and $\u_i \cdot \v_i$ stands for  the dot products in $\R^4$. The matrix Lie group, either $SO(4)$ ($\mathfrak{so} (4)$ being the Lie algebra) or $SO(3,1)$ ($\mathfrak{so} (3,1)$ being the Lie algebra), acts on $(\R^4)^N$ diagonally, where we understand the action on $(\M^3)^N$ of the groups as the induced action. It is easy to see that these actions preserve the kinetic and potential functions. As mentioned, without loss of generality, we can put relative equilibria in the form 
\[\exp(\boldsymbol{\xi}_1t)\q, \ \  \exp(\boldsymbol{\xi}_2t)\q,\]
 where 
$\boldsymbol{\xi}_1$ and $\boldsymbol{\xi}_2$ are defined in Section \ref{relativeequilibria}. Then the vector fields generated by $\boldsymbol{\xi}_1$  and $\boldsymbol{\xi}_2$ on $(\S^3)^N$ and  $(\H^3)^N$ are simply $\boldsymbol{\xi}_1\q$ and $\boldsymbol{\xi}_2\q$, respectively, where $\boldsymbol{\xi}_1$
and $\boldsymbol{\xi}_2$  are $4N\times 4N$  block diagonal matrices. Now we can compute the effective potential to get a new criterion for the existence of relative equilibria.

Recall that the definition of the moment of inertia for a configuration $\q=(\q_1, \cdots, \q_N)$,  $\q_i\in \M^3$ is
\begin{equation*}
I(\q):=\sum_{i=1}^Nm_i(x_i^2+y_i^2).
\end{equation*} 
\begin{criterion}\label{criterion}
Let $\q=(\q_1,\dots,\q_N)$, $\q_i=(x_i,y_i,z_i,w_i)^T, \ i=\overline{1,N},$ be a nonsingular configuration in $\S^3$. Then $\exp(\boldsymbol{\xi}_1t)\q=A_{\alpha, \beta}(t)\q$ is a relative equilibrium if and only if this configuration satisfies the equations
\[ \frac{\b^2-\a^2}{2}\nabla_{\q_i} I(\q)=\nabla_{\q_i} U(\q). \]
Let $\q=(\q_1,\dots,\q_N)$, $\q_i=(x_i,y_i,z_i,w_i)^T, \ i=\overline{1,N},$ be a nonsingular configuration in $\H^3$. Then $\exp(\boldsymbol{\xi}_2t)\q=B_{\alpha, \beta}(t)\q$ is a relative equilibrium if and only if this configuration satisfies the equations
\[-\frac{\a^2+\b^2}{2}\nabla_{\q_i} I(\q)=\nabla_{\q_i} U(\q). \]
\end{criterion}

\begin{proof}
Recall that for one point mass $m$ at $\q=(x,y,z,w)^T$ in $\S^3$ or $\H^3$, equations \eqref{I1} and  \eqref{I2} imply that   
\begin{equation*}
\begin{split}
\ll \boldsymbol{\xi}_1  \q\,, \boldsymbol{\xi}_1  \q \gg_{T\S^3}&=m(\a^2-\b^2)(x^2+y^2) + m \b^2,\\
\ll \boldsymbol{\xi}_2 \q\,, \boldsymbol{\xi}_2  \q \gg_{T\H^3}&=m(\a^2+\b^2)(x^2+y^2) + m \b^2,
\end{split}
\end{equation*}
respectively.  Then for a configuration $\q=(\q_1,\dots,\q_N)\in (\M^3)^N$, we obtain that 
\begin{equation*}
\begin{split}
\ll \boldsymbol{\xi}_1  \q\,, \boldsymbol{\xi}_1  \q \gg_{T(\S^3)^N}&=\sum _{i=1}^N m_i(\a^2-\b^2)(x_i^2+y_i^2) + \sum _{i=1}^N m_i \b^2,\\
\ll \boldsymbol{\xi}_2 \q\,, \boldsymbol{\xi}_2 \q \gg_{T(\H^3)^N}&=\sum _{i=1}^N m_i(\a^2+\b^2)(x_i^2+y_i^2) + \sum _{i=1}^N m_i \b^2.
\end{split}
\end{equation*}
Thus the effective potentials \eqref{effective-potential} with respect to $\boldsymbol{\xi}_1$ and $\boldsymbol{\xi}_2$ are
 \begin{equation*} 
 \begin{split}
 V_{\boldsymbol{\xi}_1} (\q)&=-U(\q)- \sum_{i=1}^ N \frac{m_i}{2} (\a^2-\b^2)(x_i^2+y_i^2),\\
 V_{\boldsymbol{\xi}_2} (\q)&=-U(\q)- \sum_{i=1}^ N \frac{m_i}{2} (\a^2+\b^2)(x_i^2+y_i^2),\\
 \end{split}
 \end{equation*} 
 where we have ignored the constants. Thus  $\exp(\boldsymbol{\xi}_i t)\q$ is a relative equilibrium if and only if $\q$ is a critical point of these effective potentials, which is equivalent to  the two equations as stated in the criterion. 
This remark completes the proof. 
\end{proof}

We claim that the gradient of the moment of inertia $I$ matches the left hand side of the equations in criteria \eqref{rel-crit+} and \eqref{rel-crit-}. Indeed, define $f(x,y,z,w)=x^2+y^2$  as a function from $\M^3$ to $\R$. We employ the trick used to derive the equations of motion. Extend  $f$ to a  homogeneous function $\bar f$ of degree zero, defined in  the ambient space $\R^4$, 
  \[ \bar{f}(x,y,z,w):= \frac{x^2+y^2}{\sigma (x^2+y^2+z^2)+w^2}, \] 
let $\t{\nabla}$ be the gradient in the ambient space, and $\frac{\partial}{\partial n}$ be the unit  normal vector of the unit sphere.  Since $\frac{\partial\bar{f}}{\partial r}=0$, we obtain  $(\t{\nabla}\bar{f})|_{\M^3}=\nabla f+ \frac{\partial \bar{f}}{\partial r}\frac{\partial}{\partial n}=\nabla f$.   Thus straightforward computations show that 
     \begin{equation*}\begin{split}
    \nabla f(x,y,z,w)&= 
   2[x(w^2+  z^2),
             y(w^2+  z^2),
            - z(x^2+ y^2),
             -  w(x^2+ y^2) ]^T \ {\rm in}\ T\S^3, \\
    \nabla f(x,y,z,w) &=2[x(w^2-  z^2),
                 y(w^2- z^2),
                 z(x^2+ y^2),
                   w(x^2+ y^2) ]^T\ {\rm in}\ T\H^3.   
      \end{split}\end{equation*} 
Hence we can conclude that $\nabla_{\q_i} I(\q)$ is given by
\begin{equation*}
   2m_i 
        \begin{bmatrix}
         x_i(w_i^2+ z_i^2)\\
         y_i(w_i^2+  z_i^2)\\
        -  z_i(x_i^2+ y_i^2)\\
         -  w_i(x_i^2+ y_i^2) 
         \end{bmatrix}{\rm in}\ \ \!  T(\S^3)^N\ \ {\rm and}\ \  2m_i 
                 \begin{bmatrix}
                  x_i(w_i^2- z_i^2)\\
                  y_i(w_i^2- z_i^2)\\
                  z_i(x_i^2+ y_i^2)\\
                   w_i(x_i^2+ y_i^2) 
                  \end{bmatrix} {\rm in}\ \ \!  T(\H^3)^N. 
\end{equation*}
Thus Criterion \ref{criterion}
agrees with the criteria obtained in the last two subsections. 

\section{Central configurations}\label{central-config}
In this section we will introduce central configurations in $\S^3$ and $\H^3$ and show their connection with relative equilibria. We will also isolate a particular class of central configurations that correspond to fixed-point solutions in $\S^3$, but which don't exist in $\H^3$, and also introduce various other types of central configurations. Finally we will provide their physical description.

\subsection{Definition of central configurations}
Recall that the central configurations of the Newtonian $N$-body problem are of the form $\q=(\q_1,\dots,\q_N)$, $\q_i=(x_i,y_i,z_i)^T$, with
\[ \nabla_{\q_i}U= \lambda \nabla_{\q_i} \sum_{1\le i\le N} m_i(x_i^2+y_i^2+z_i^2), \ \  i=\overline{1,N},\]
 where $\lambda \in \R$ is a constant and $U$ is the Newtonian force function. The resemblance between these conditions and the   equations occurring in Criterion \ref{criterion} for relative equilibria in $\M^3$ suggests a way to define central configuration of the curved $N$-body problem.
 
\begin{definition}
Assume that the point masses $m_1,\dots, m_N$ in $\M^3$ have the nonsingular positions given by the vector
$$
\q=(\q_1,\dots,\q_N),\ \q_i=(x_i,y_i,z_i,w_i)^T, \ i=\overline{1,N}.
$$
Then $\q$ is a central configuration of the curved $N$-body problem in $\M^3$ if it satisfies the equations
\begin{equation}\label{CCE}
\nabla_{\q_i}U(\q)=\lambda\nabla_{\q_i} I(\q),\ i=\overline{1,N},
\end{equation}
where $\lambda \in \R$ is a constant and $I$ is the moment of inertia. We will further refer to these conditions as the first central configuration equation.
\end{definition} 
 The following class of central configurations  exist in $\S^3$ only, \cite{Diacu03}, \cite{Diacu05}.

\begin{definition}
Consider the masses $m_1,\dots, m_N>0$ in $\S^3$. Then
a configuration
$$
\q=(\q_1,\dots,\q_N),\ \q_i=(x_i,y_i,z_i,w_i)^T, \ i=\overline{1,N},
$$
is called a special central configuration if it is a critical point of the force function $U$, i.e.
\begin{equation}\label{SCCE}
\nabla_{\q_i}U(\q)=0,\ i=\overline{1,N}.
\end{equation}
To avoid any confusion, we will call ordinary central configurations those central configurations that are not special.  
\end{definition}

Special central configurations are obviously central configurations since they satisfy equations \eqref{CCE} with either $\lambda=0$ or $\nabla_{\q_i} I(\q)=0$ for all $i=\overline{1,N}$. We will further see that special central configurations differ from ordinary central configurations in many ways.  

Here is one remark on terminology. These special central configurations were introduced in \cite{Diacu03} under the name of  \emph{fixed points}. Given such a configuration $\q$, we see with the help of Criterion \ref{criterion} that $A_{0,0}(t) \q$ is an associated relative equilibrium, which is a \emph{fixed-point solution}: $\q(t)=\q$, $\p(t)=0$.  This explains the old terminology. 
Let us introduce some new terminology as well.
  
\begin{definition}
A central configuration $\q$ of the curved $N$-body problem is
called 
\begin{itemize}
\item a geodesic central configuration if it is lying on a geodesic;
\item  an $\S^2$ central configuration if it is lying on a great 2-sphere; 
\item   an $\H^2$ central configuration if it is lying on a great hyperbolic 2-sphere;
\item  an $\S^3$ central configuration if it is not lying on any great 2-sphere;
\item   an $\H^3$ central configuration if it is not  lying on any great hyperbolic 2-sphere.
\end{itemize}
$\S^2$ central configurations and  $\H^2$ central configurations
will also be called $\M^2$ central configurations.
\end{definition}
  
 Central configurations will play an important role in the study of the curved $N$-body problem. For example, they influence the topology of the integral manifolds \cite{Marsden,Smale70-1}, 
 and they  are closely related to the relative equilibria.  
   \subsection{Central configurations and solutions of the curved $N$-body problem}
     
     Each central configuration gives rise to a family of relative equilibria, which we call \emph{relative equilibria associated to a central configuration}. Thus it is  advantageous to seek central configurations instead of relative equilibria. We first introduce the following notations
     \begin{equation*} 
      \begin{split}
      \S^1_{xy}:=&\{(x,y,z,w)^T\in \R^4| x^2+y^2=1, z=w=0\},\\
           \S^1_{zw}:=&\{(x,y,z,w)^T\in \R^4| z^2+w^2=1, x=y=0\}, \\
         \H^1_{zw}:=&\{(x,y,z,w)^T\in \R^4| z^2-w^2=-1, x=y=0\}.
      \end{split}
      \end{equation*}
      
\begin{proposition}\label{vanish_of_I}
On $(\S^3)^N$, \[  \nabla_{\q_i}I=0 \ {\rm if  \ and \ only\  if}\ \q_i\in \S^1_{xy}\cup\S^1_{zw}, \]
On $(\H^3)^N$, 
 \[  \nabla_{\q_i}I=0 \ {\rm if  \ and \ only\  if}\ \q_i\in    \H^1_{zw}. \]
\end{proposition}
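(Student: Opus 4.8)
The plan is to reduce everything to the explicit formulas for $\nabla_{\q_i} I$ recorded at the end of Section~\ref{criteria}, and then simply read off when those tangent vectors vanish, being careful to use the defining constraint $\q_i\cdot\q_i=\sigma$ at the right moment.

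First I would recall that, as computed there, on $(\S^3)^N$
\[
\nabla_{\q_i} I(\q) = 2m_i\bigl(x_i(w_i^2+z_i^2),\ y_i(w_i^2+z_i^2),\ -z_i(x_i^2+y_i^2),\ -w_i(x_i^2+y_i^2)\bigr)^T,
\]
and on $(\H^3)^N$
\[
\nabla_{\q_i} I(\q) = 2m_i\bigl(x_i(w_i^2-z_i^2),\ y_i(w_i^2-z_i^2),\ z_i(x_i^2+y_i^2),\ w_i(x_i^2+y_i^2)\bigr)^T.
\]
Since $m_i>0$, the condition $\nabla_{\q_i}I=0$ is equivalent to the simultaneous vanishing of the four bracketed entries, so the whole proposition becomes a small algebra problem on each of the four scalar equations.

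For the sphere I would set $A_i:=x_i^2+y_i^2$ and $B_i:=z_i^2+w_i^2$, so that $A_i+B_i=1$ because $\q_i\in\S^3$. The four equations then read $x_iB_i=y_iB_i=z_iA_i=w_iA_i=0$, from which $A_iB_i=0$; together with $A_i+B_i=1$ this forces $(A_i,B_i)=(1,0)$ or $(0,1)$, i.e.\ $\q_i\in\S^1_{xy}$ or $\q_i\in\S^1_{zw}$, and the reverse inclusion is immediate by substitution. For the hyperbolic sphere the key remark is that $\q_i\cdot\q_i=-1$ gives $w_i^2-z_i^2=x_i^2+y_i^2+1\geq 1>0$; hence the first two equations $x_i(w_i^2-z_i^2)=y_i(w_i^2-z_i^2)=0$ already force $x_i=y_i=0$, which makes $x_i^2+y_i^2=0$ so that the remaining two equations are vacuous, and the constraint then yields $z_i^2-w_i^2=-1$, i.e.\ $\q_i\in\H^1_{zw}$, the converse again being immediate.

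I do not anticipate a genuine obstacle: once the gradient formulas are invoked the argument is a few lines of elementary algebra. The one point worth flagging is the source of the asymmetry between the two conclusions — on $\S^3$ the circles $\S^1_{xy}$ and $\S^1_{zw}$ enter symmetrically through $A_i+B_i=1$, whereas on $\H^3$ the quantity $w_i^2-z_i^2$ is strictly positive because of the Minkowski signature, so the $\S^1_{xy}$-type solutions disappear and only $\H^1_{zw}$ survives.
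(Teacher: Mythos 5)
Your proposal is correct and follows essentially the same route as the paper: both invoke the explicit tangential gradient formulas for $\nabla_{\q_i}I$ from Section~\ref{criteria}, combine the first two components to conclude $(x_i^2+y_i^2)(w_i^2+\sigma z_i^2)=0$, and then use the constraint $\q_i\cdot\q_i=\sigma$ (in particular $w_i^2-z_i^2=1+x_i^2+y_i^2>0$ in the hyperbolic case) to identify the vanishing locus. Your bookkeeping with $A_i+B_i=1$ is just a slight repackaging of the paper's argument.
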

\begin{proof}
On $(\S^3)^N$, recall that 
  $$
  \nabla_{\q_i} I = 2m_i( x_i(w_i^2+ z_i^2),
           y_i(w_i^2+  z_i^2),
          -  z_i(x_i^2+ y_i^2),
           -  w_i(x_i^2+ y_i^2) )^T.
  $$
 On one hand,  if $\nabla_{\q_i} I$  is a zero vector,  then 
  $$
  (x_i(w_i^2+ z_i^2))^2+(y_i(w_i^2+ z_i^2))^2=(x_i^2+ y_i^2)(w_i^2+ z_i^2)^2=0, 
  $$
  which means that $\q_i\in \S^1_{xy}$ or $\S^1_{zw}$. On the other hand, it is easy to see that if  $\q_i\in \S^1_{xy}\cup \S^1_{zw}$, then $\nabla_{\q_i} I=0$. 

 On $(\H^3)^N$, recall that 
  $$
   \nabla_{\q_i} I = 2m_i( x_i(w_i^2- z_i^2),
            y_i(w_i^2- z_i^2),
             z_i(x_i^2+ y_i^2),
              w_i(x_i^2+ y_i^2) )^T.
   $$
Again, on one hand,  if $\nabla_{\q_i} I$  is a zero vector,  then 
  $$
  (x_i(w_i^2- z_i^2))^2+(y_i(w_i^2- z_i^2))^2=(x_i^2+ y_i^2)(w_i^2- z_i^2)^2=0, 
  $$
  which means that $x_i=y_i=0$, since $w_i^2- z_i^2 = 1+x_i^2+y_i^2\ne 0$. Thus we notice that  $\q_i\in \H^1_{zw}$. On the other hand, it is easy to see that if  $\q_i\in \H^1_{zw}$, then $\nabla_{\q_i} I=0$. This remark completes the proof. 
\end{proof}

A direct consequence of the central configuration equation defined in \eqref{CCE}, \eqref{SCCE}, and discussed in Criterion \ref{criterion} is the following result.
 
 \begin{corollary}\label{CC_RE}
 Consider a central configuration $\q=(\q_1,\dots,\q_N)$, $\q_i=(x_i,y_i,z_i,w_i)^T,$ $i=\overline{1,N},$  in $\M^3$. Let $\lambda$ be the constant in the central configuration equation $\nabla_{\q_i}U(\q)=\lambda\nabla_{\q_i} I(\q)$. 
 \begin{itemize}
 \item If $\q$ is an ordinary central configuration  in  $\S^3$, then it  gives rise to   a one-parameter family of relative equilibria: $A_{\alpha, \beta}(t)\q$ with $\lambda=\frac{\b^2-\a^2}{2}$. 
 \item If $\q$ is in  $\H^3$, then it  gives rise to a one-parameter family of relative equilibria: $B_{\alpha, \beta}(t)\q$ with $\lambda=-\frac{\a^2+\b^2}{2}$. 
 \item If $\q$ is a special central configuration in $\S^3$ and not all  the particles are in $\S^1_{xy}\cup \S^1_{zw}$,  then it  gives rise to a one-parameter family of relative equilibria: $A_{\alpha, \beta}(t)\q$ with $0=\b^2-\a^2$. 
 \item If $\q$ is a special central configuration in  $\S^3$ and all  the particles are in $\S^1_{xy}\cup \S^1_{zw}$,  then it  gives rise to a two-parameter family of relative equilibria: $A_{\alpha, \beta}(t)\q$ with $\a ,\b \in \R$. 
 \end{itemize}
 \end{corollary}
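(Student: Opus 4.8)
The plan is to deduce each bullet of Corollary \ref{CC_RE} directly from Criterion \ref{criterion}, combined with Proposition \ref{vanish_of_I} for the special case. The key observation is that the central configuration equation $\nabla_{\q_i}U(\q)=\lambda\nabla_{\q_i}I(\q)$ is, term by term, exactly the equation characterizing relative equilibria in Criterion \ref{criterion}, once we identify $\lambda$ with the appropriate combination of $\alpha$ and $\beta$. So the argument is essentially a matching of parameters, and the work lies in checking that the parameter values one needs are actually realizable.

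First I would handle the $\S^3$ ordinary case. By Criterion \ref{criterion}, $A_{\alpha,\beta}(t)\q$ is a relative equilibrium if and only if $\tfrac{\beta^2-\alpha^2}{2}\nabla_{\q_i}I(\q)=\nabla_{\q_i}U(\q)$ for all $i$. Given an ordinary central configuration with constant $\lambda$, I need to show that the set of pairs $(\alpha,\beta)$ with $\tfrac{\beta^2-\alpha^2}{2}=\lambda$ is nonempty and one-parameter; this is immediate since $\beta^2-\alpha^2=2\lambda$ describes a nonempty curve in the $(\alpha,\beta)$-plane (a hyperbola if $\lambda\neq 0$, a pair of lines if $\lambda=0$), so we get a one-parameter family of relative equilibria $A_{\alpha,\beta}(t)\q$. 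The $\H^3$ case is identical: Criterion \ref{criterion} gives the equation $-\tfrac{\alpha^2+\beta^2}{2}\nabla_{\q_i}I(\q)=\nabla_{\q_i}U(\q)$, so I match $\lambda=-\tfrac{\alpha^2+\beta^2}{2}$; here $\lambda$ must be $\le 0$, which I should note follows automatically because for a genuine central configuration in $\H^3$ the gradients $\nabla_{\q_i}I$ and $\nabla_{\q_i}U$ point in opposite senses (the attractive force versus the moment of inertia gradient), and in any case the equation $\alpha^2+\beta^2=-2\lambda$ has solutions precisely when $\lambda\le 0$, giving a one-parameter family (a circle, or the single point $\alpha=\beta=0$ when $\lambda=0$, which must be excluded if we want a nontrivial relative equilibrium — I should address this edge case).

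For the two special-central-configuration bullets on $\S^3$, I would invoke Proposition \ref{vanish_of_I}. If $\q$ is special, then $\nabla_{\q_i}U(\q)=0$ for all $i$, so the relative-equilibrium equation from Criterion \ref{criterion} becomes $\tfrac{\beta^2-\alpha^2}{2}\nabla_{\q_i}I(\q)=0$ for each $i$. If not all particles lie in $\S^1_{xy}\cup\S^1_{zw}$, then by Proposition \ref{vanish_of_I} some $\nabla_{\q_i}I\neq 0$, forcing $\beta^2=\alpha^2$; conversely any such $(\alpha,\beta)$ works, yielding the one-parameter family $A_{\alpha,\beta}(t)\q$ with $\beta^2-\alpha^2=0$. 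If instead all particles lie in $\S^1_{xy}\cup\S^1_{zw}$, then $\nabla_{\q_i}I=0$ for every $i$ by Proposition \ref{vanish_of_I}, so the equation $\tfrac{\beta^2-\alpha^2}{2}\nabla_{\q_i}I(\q)=0$ holds for all $(\alpha,\beta)\in\R^2$, giving the full two-parameter family.

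The main obstacle, such as it is, is not a deep one: it is bookkeeping about which parameter values are admissible and whether the resulting ``family'' is genuinely one- or two-parameter rather than degenerating to a point. In particular I want to be careful that $A_{\alpha,\beta}$ is read as a subgroup depending on the two-vector $(\alpha,\beta)$ even though only the combination $\beta^2-\alpha^2$ (resp.\ $\alpha^2+\beta^2$) is pinned down by $\lambda$, so that ``one-parameter family'' correctly refers to the remaining freedom along the level curve; and I should flag the $\lambda=0$ situation in $\H^3$ and the ordinary-vs-special dichotomy so the statement is not vacuous. Beyond that, the proof is a direct citation of Criterion \ref{criterion} and Proposition \ref{vanish_of_I} with no new computation required.
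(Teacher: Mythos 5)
Your proposal is correct and follows essentially the same route as the paper: cite Criterion \ref{criterion} to match $\lambda$ with $\tfrac{\beta^2-\alpha^2}{2}$ (resp.\ $-\tfrac{\alpha^2+\beta^2}{2}$) and use Proposition \ref{vanish_of_I} to split the special-central-configuration case according to whether all $\q_i$ lie in $\S^1_{xy}\cup\S^1_{zw}$. The only point to tighten is the sign of $\lambda$ in $\H^3$: your ``opposite senses'' heuristic is not a proof, and the paper itself flags this gap and defers it to the explicit computation of $\lambda$ in Section \ref{lambda}, which shows $\lambda<0$ there.
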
   
Before proving this result, let us make the following remark on terminology.
In the literature, the concept of relative equilibrium stands for both the central configurations and the rigid motions associated to them, \cite{Marsden, Smale}. In this paper, however, we use the term  relative equilibrium only for the rigid motions.
    \begin{proof}
    The first two claims are obvious.  If $\q$ is a special central configuration in $\S^3$, then by Criterion \ref{criterion}, $A_{\alpha, \beta}(t)\q$ is an associated relative equilibrium if and only if $\frac{\b^2-\a^2}{2}\nabla_{\q_i}I=0$ for all $i$. 
    
    There are two possibilities: first, if there exists some $\q_i$ with 
    $\nabla_{\q_i}I\ne0$, that is, there is some $\q_i\notin\S^1_{xy}\cup \S^1_{zw},$ then $0=\b^2-\a^2$, i.e., $\q$ gives 
    rise to a one-parameter family of relative equilibria:  $A_{\alpha, \beta}(t)\q$ with $0=\b^2-\a^2$; second, if $\nabla_{\q_i}I=0$ for all $i$,  that is,  $\q_i\in\S^1_{xy}\cup \S^1_{zw}$ for all $i$,  then there is no limitation for $\a,\b$, i.e., $\q$ gives rise to a two-parameter family of relative equilibria: $A_{\alpha, \beta}(t)\q$ with $\a ,\b \in \R$. 
   This remark completes the proof of the statements relative to $\S^3$. 
    \end{proof}
 \begin{remark}
 The reader may notice a gap in the proof. For a central configuration in $\H^3$, we don't have a one-parameter family of relative equilibria, as  claimed, unless  we can show that the value of $\lambda$ is always negative. This fact will be proved in Section \ref{lambda}, where the value of $\lambda$ will be explicitly computed. 
 \end{remark} 
   
Let us notice that while spatial central configurations of  the Newtonian $N$-body problem  do not have associated relative equilibria, all central configurations of the curved $N$-body problem have associated relative equilibria.  

Now it is easy to explain what happens in Examples 1 and 2 of Section \ref{relativeequilibria}. In Example 1, we can check that the given configuration $\q$ is a central configuration in $\S^3$ with $\lambda=-\frac{1}{2}$. Then we obtain the positive elliptic and positive elliptic-elliptic relative equilibria from it. Similarly, in Example 2,  the given configuration $\q$ is a central configuration in $\H^3$ with $\lambda=-\frac{1}{2}$, and we obtain the negative elliptic, negative hyperbolic, and negative elliptic-hyperbolic relative equilibria from it. 
    
In the family of relative equilibria associated to one central configuration, there are motions of different characteristics. In $\S^3$, the relative equilibria can be positive elliptic and positive elliptic-elliptic. In  $\H^3$, they can be negative elliptic, negative hyperbolic, and negative elliptic-hyperbolic. Furthermore, in $\S^3$, these rigid motions can be periodic or quasi-periodic unless they stem from a special central configuration with a body not on $\S^1_{xy}\cup \S^1_{zw}$. Since
 $\a/\b=\sqrt{1-2\lambda /\b^2}$,  the associated relative equilibria are periodic if we choose $\b$ such that $\sqrt{1-2\lambda /\b^2}$
 is a rational number, and they are quasi-periodic if we choose $\b$ such that $\sqrt{1-2\lambda /\b^2} $ is an irrational number.

However, unlike in the Newtonian $N$-body problem,  central configurations do not provide us with homothetic solutions, which occur only in vector spaces, since they require similarity, \cite{Wintner}. Since $\S^3$ and $\H^3$ are not vector spaces, we cannot derive such orbits from central configurations. 
    
 We end this subsection with stating the following property, which is a direct consequence from Proposition \ref{vanish_of_I}.
 \begin{corollary}\label{SCC_on_S1&S1}
A central  configuration $\q=(\q_1,\dots,\q_N),\ \q_i=(x_i,y_i,z_i,w_i)^T \in \S^1_{xy}\cup \S^1_{zw},   \ i=\overline{1,N}$,  is a special central configuration.  
 \end{corollary}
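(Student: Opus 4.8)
The plan is to read this off immediately from Proposition~\ref{vanish_of_I} together with the first central configuration equation~\eqref{CCE}. Observe first that $\S^1_{xy}$ and $\S^1_{zw}$ are both contained in $\S^3$ (each point satisfies $x^2+y^2+z^2+w^2=1$), so the statement concerns central configurations on the sphere, and the version of Proposition~\ref{vanish_of_I} stated for $(\S^3)^N$ applies verbatim.

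The key step is as follows. By hypothesis $\q_i\in\S^1_{xy}\cup\S^1_{zw}$ for each $i=\overline{1,N}$, so the ``if'' direction of Proposition~\ref{vanish_of_I} gives $\nabla_{\q_i}I(\q)=0$ for every $i$. Since $\q$ is a central configuration, there is a constant $\lambda\in\R$ with $\nabla_{\q_i}U(\q)=\lambda\nabla_{\q_i}I(\q)$ for all $i$; substituting the vanishing of $\nabla_{\q_i}I(\q)$ yields $\nabla_{\q_i}U(\q)=0$ for all $i$, which is precisely equation~\eqref{SCCE} defining a special central configuration. Hence $\q$ is a special central configuration.

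In particular, this argument shows a fortiori that a central configuration all of whose bodies lie on $\S^1_{xy}\cup\S^1_{zw}$ can never be an ordinary central configuration: being ``central'' at all already forces it into the special class. There is no genuine obstacle here. The only facts used are that $\S^1_{xy}\cup\S^1_{zw}$ is exactly the zero set of the map $\q_i\mapsto\nabla_{\q_i}I$ on $\S^3$, which is the content of Proposition~\ref{vanish_of_I}, and that the definition of a special central configuration is literally $\nabla_{\q_i}U(\q)=0$ for all $i$, so the implication is direct.
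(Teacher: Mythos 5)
Your proof is correct and follows exactly the route the paper intends: the corollary is stated there as a direct consequence of Proposition~\ref{vanish_of_I}, namely that $\nabla_{\q_i}I=0$ on $\S^1_{xy}\cup\S^1_{zw}$ forces $\nabla_{\q_i}U=\lambda\nabla_{\q_i}I=0$ for all $i$. Nothing is missing.
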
   
     
   \subsection{A physical description of central configurations}
In this subsection, we study the vector field $\nabla_{\q_i}I(\q)$, which provides us with  a physical description of central configurations and brings  restrictions to the conditions imposed on ordinary  $\S^2$ and  $\H^2$ central configurations.

Let us start with a result that gives a geometric interpretation of the moment of inertia $I$. 
\begin{lemma}\label{inertia}
In $(\S^3)^N$, 
\[I=\sum_{1\le i\le N} m_i(x_i^2+y_i^2)=\sum_{1\le i\le N} m_i \sin^2 d(\q_i, \S^1_{zw}),  \]
and in $(\H^3)^N$,  
\[I=\sum_{1\le i\le N} m_i(x_i^2+y_i^2)=\sum_{1\le i\le N} m_i \sinh^2 d(\q_i, \H^1_{zw}), \]
where  $d(A,M):= \min_{B\in M}d(A,B)$, with $A,B$ representing points and $M$ being a set. 
\end{lemma}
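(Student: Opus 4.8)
The plan is to compute the geodesic distance from a point $\q_i \in \M^3$ to the geodesic $\S^1_{zw}$ (in the spherical case) or $\H^1_{zw}$ (in the hyperbolic case) and show that its squared sine (resp. squared hyperbolic sine) equals $x_i^2 + y_i^2$; summing against the masses then gives the claimed identities immediately. Since the statement splits into two parallel cases, I would treat $\S^3$ first and then indicate the modifications for $\H^3$.

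For the spherical case, fix $\q = (x,y,z,w)^T \in \S^3$ and consider an arbitrary point $\mathbf{b} = (0,0,z',w')^T \in \S^1_{zw}$, so that $(z')^2 + (w')^2 = 1$. The geodesic distance on $\S^3$ satisfies $\cos d(\q, \mathbf{b}) = \q \cdot \mathbf{b} = z z' + w w'$ (using $\sigma = 1$). By Cauchy--Schwarz, $z z' + w w' \le \sqrt{z^2 + w^2}\,\sqrt{(z')^2 + (w')^2} = \sqrt{z^2+w^2}$, with equality attained by choosing $(z',w')$ parallel to $(z,w)$ (this is possible precisely when $(z,w) \ne (0,0)$; if $(z,w) = (0,0)$ then $z z' + w w' = 0$ for every choice, and the formula below still holds since then $x^2+y^2 = 1$ and $d = \pi/2$). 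Hence the minimal distance $d(\q, \S^1_{zw})$ is characterized by $\cos d(\q, \S^1_{zw}) = \sqrt{z^2 + w^2}$, which gives
\begin{equation*}
\sin^2 d(\q, \S^1_{zw}) = 1 - (z^2 + w^2) = x^2 + y^2,
\end{equation*}
using $x^2 + y^2 + z^2 + w^2 = 1$. Multiplying by $m_i$ and summing over $i$ yields the first identity.

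For the hyperbolic case, fix $\q = (x,y,z,w)^T \in \H^3$ and take $\mathbf{b} = (0,0,z',w')^T \in \H^1_{zw}$, so $(z')^2 - (w')^2 = -1$ with $w' > 0$. The geodesic distance on $\H^3$ satisfies $\cosh d(\q, \mathbf{b}) = -\,\q \cdot \mathbf{b} = -(z z' - w w') = w w' - z z'$ (using $\sigma = -1$ and $\arccsn(\sigma \q \cdot \mathbf{b})$ as in the equations of motion). Writing $(z,w) = (\sinh s \cos\phi, \ldots)$ — or more directly, parametrizing $z' = \sinh u$, $w' = \cosh u$ — one minimizes $w\cosh u - z\sinh u$ over $u \in \R$; the minimum of $w\cosh u - z\sinh u$ (which is positive since $w^2 - z^2 = 1 + x^2 + y^2 > 0$) equals $\sqrt{w^2 - z^2}$, attained at $\tanh u = z/w$. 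Therefore $\cosh d(\q, \H^1_{zw}) = \sqrt{w^2 - z^2}$, so
\begin{equation*}
\sinh^2 d(\q, \H^1_{zw}) = (w^2 - z^2) - 1 = x^2 + y^2,
\end{equation*}
using $x^2 + y^2 + z^2 - w^2 = -1$. Multiplying by $m_i$ and summing gives the second identity.

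The only genuinely delicate point is the hyperbolic minimization: one must verify that $\inf_u (w\cosh u - z\sinh u) = \sqrt{w^2 - z^2}$ and that this infimum is actually attained (it is, at $\tanh u = z/w$, which lies in $(-1,1)$ since $|z| < |w|$), and one should also confirm that $-\q\cdot\mathbf{b} \ge 1$ throughout so that $\arccsn$ is well-defined — this follows from $w w' - z z' \ge \sqrt{w^2-z^2}\cdot 1 \ge 1$. The spherical case is routine Cauchy--Schwarz once one notes the degenerate subcase $(z,w)=(0,0)$. I would also remark that both computations are consistent with the degenerate situations isolated in Proposition \ref{vanish_of_I}: $\nabla_{\q_i} I = 0$ corresponds exactly to $x_i = y_i = 0$ (i.e. $\q_i$ on $\S^1_{zw}$ or $\H^1_{zw}$, distance zero) or, on $\S^3$, to $\q_i \in \S^1_{xy}$ (distance $\pi/2$, the maximal value, where $\sin^2 d = 1 = x_i^2 + y_i^2$).
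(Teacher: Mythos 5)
Your proof is correct, and it reaches the same target identity as the paper ($\cos^2 d(\q,\S^1_{zw})=z^2+w^2$ in $\S^3$, $\cosh^2 d(\q,\H^1_{zw})=w^2-z^2$ in $\H^3$), but by a noticeably different route. The paper argues geometrically: it decomposes $A=A_v+A_h\in(\R^2_{zw})^\perp\oplus\R^2_{zw}$ and simply asserts that $\|A_h\|=\csn\theta$ where $\theta=d(A,\M^1_{zw})$, i.e.\ that the distance from a point to the great circle $\M^1_{zw}$ is realized by the angle to its orthogonal projection onto the plane $\R^2_{zw}$; the computation then reduces to evaluating $\|z e_z+w e_w\|^2$ in the relevant inner product. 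You instead prove that realization claim from scratch by explicit optimization over the circle: Cauchy--Schwarz for the maximum of $zz'+ww'$ on $\S^1_{zw}$, and a one-variable minimization of $w\cosh u - z\sinh u$ on $\H^1_{zw}$. Your version is more elementary and more self-contained -- it supplies the justification the paper leaves implicit, handles the degenerate case $(z,w)=(0,0)$ where the projection vanishes, and checks that $\mathrm{arccosh}$ is applied to a quantity $\ge 1$ -- at the cost of being longer; the paper's projection argument is slicker but leans on an unproved (though standard) geometric fact. One tiny point you could make fully explicit: in the hyperbolic minimization, the critical point $\tanh u=z/w$ is the global minimum because $w\cosh u - z\sinh u\to+\infty$ as $u\to\pm\infty$ (using $w>|z|$), so the unique critical point must be a minimum.
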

  
  \begin{proof}
It suffices to show that for a point $A=(x,y,z,w)^T$ in $\S^3$ or  $\H^3$, 
\[ \sigma z^2+w^2=\cos ^2 d(A, \S^1_{zw}) \ \ {\rm and}\ \ \cosh ^2 d(A, \H^1_{zw}), \ {\rm respectively}. \]   

Consider in $\R^4$ the vectors $A$, $e_z=(0,0,1,0)$, and $e_w=(0,0,0,1)$. Let 
$\R^3_A$ be the 3- (or 2-) dimensional subspace spanned by the these three vectors and $\R^2_{zw}$ the 2-dimensional subspace spanned by $e_z$ and $e_w$. 

In $\S^3$, the minimal geodesic connecting $A$ and $\S^1_{zw}$ is on the great 2-sphere $\S^2_A=\R^3_A\cap \S^3$. Let $\th=d(A, \S^1_{zw})$, then $A=A_v+A_h\in (\R^2_{zw})^\bot\oplus\R^2_{zw}$ with $||A_v||=\sin \th$ and $||A_h||=\cos \th$. Hence, we obtain
$$\cos^2 d(A,\S^1_{zw})=||A_h||^2=|| (A\cdot e_z ) e_z+(A\cdot e_w)  e_w ||^2=|| z  e_z+w  e_w ||^2=z^2+w^2.$$

In $\H^3$, the minimal geodesic connecting $A$ and $\H^1_{zw}$ is on the great hyperbolic 2-sphere $\H^2_A=\R^3_A\cap \H^3$. Let $\th=d(A, \H^1_{zw})$, then $A=A_v+A_h\in (\R^2_{zw})^\bot\oplus\R^2_{zw}$ with $||A_v||=\sinh \th$ and $||A_h||=\cosh \th$. Hence, we obtain
\begin{equation*} 
 \begin{split}
\cosh^2 d(A,\S^1_{zw})&=||A_h||^2=|| \frac{A\cdot e_z}{e_z\cdot e_z}  e_z+\frac{A\cdot e_w}{e_w\cdot e_w}  e_w ||^2\\
      &=|| z  e_z-(-w)  e_w ||^2=|(z  e_z+w  e_w)\cdot(z  e_z+w  e_w)  |\\
      &=|z^2-w^2|=-z^2+w^2,
 \end{split}
 \end{equation*}
since  $z^2-w^2=-1-x^2-y^2<0$. This remark competes the proof.
  \end{proof}

\begin{theorem}[The second central configuration equation]
A nonsingular configuration $\q=(\q_1,\dots,\q_N),\ \q_i=(x_i,y_i,z_i,w_i)^T,   \ i=\overline{1,N}$, in $\M^3$ is a central configuration if and only if  
\begin{equation}\label{CCE2}
 \begin{split}
\nabla_{\q_i}U(\q)&=\lambda m_i \sin [2 d(\q_i, \S^1_{zw})]\nabla_{\q_i}d(\q_i, \S^1_{zw}),\ i=\overline{1,N}, \ {\rm in}\ \S^3,\\
 \nabla_{\q_i}U(\q)&=\lambda m_i \sinh [2 d(\q_i, \H^1_{zw})]\nabla_{\q_i}d(\q_i, \H^1_{zw}),\ i=\overline{1,N}, \ {\rm in}\ \H^3,
 \end{split}
 \end{equation}
 where $\lambda \in \R$ is a constant.
\end{theorem}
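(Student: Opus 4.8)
The plan is to derive this characterization directly from the first central configuration equation \eqref{CCE} together with the geometric formula for the moment of inertia proved in Lemma \ref{inertia}. By definition, $\q$ is a central configuration of the curved $N$-body problem in $\M^3$ precisely when there is a constant $\lambda\in\R$ with $\nabla_{\q_i}U(\q)=\lambda\nabla_{\q_i}I(\q)$ for all $i=\overline{1,N}$; hence everything reduces to rewriting the vector field $\nabla_{\q_i}I(\q)$ in terms of the geodesic distance to $\S^1_{zw}$ in $\S^3$, and to $\H^1_{zw}$ in $\H^3$.

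First I would use Lemma \ref{inertia} to write, on $(\S^3)^N$, $I(\q)=\sum_{j=1}^N m_j\sin^2 d(\q_j,\S^1_{zw})$, a sum in which the $j$-th summand depends only on $\q_j$. Therefore $\nabla_{\q_i}I$ picks up only the $i$-th term, and the chain rule on the Riemannian manifold $\S^3$ gives $\nabla_{\q_i}I = 2m_i\sin d(\q_i,\S^1_{zw})\cos d(\q_i,\S^1_{zw})\,\nabla_{\q_i}d(\q_i,\S^1_{zw}) = m_i\sin[2d(\q_i,\S^1_{zw})]\,\nabla_{\q_i}d(\q_i,\S^1_{zw})$. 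Running the identical computation on $(\H^3)^N$, with $\sinh^2$ in place of $\sin^2$ and $\H^1_{zw}$ in place of $\S^1_{zw}$, yields $\nabla_{\q_i}I = m_i\sinh[2d(\q_i,\H^1_{zw})]\,\nabla_{\q_i}d(\q_i,\H^1_{zw})$. Substituting these two expressions into $\nabla_{\q_i}U(\q)=\lambda\nabla_{\q_i}I(\q)$ produces exactly the system \eqref{CCE2}, and conversely \eqref{CCE2} recovers \eqref{CCE}; this establishes the stated equivalence, with the same constant $\lambda$ in both formulations.

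The one point requiring care — which I expect to be the main, though minor, obstacle — is that the map $\q_i\mapsto d(\q_i,\S^1_{zw})$ fails to be differentiable exactly on $\S^1_{zw}$ and on its cut locus $\S^1_{xy}$ (in the hyperbolic case, only on $\H^1_{zw}$ itself), so the chain-rule step is literally valid only for $\q_i$ outside this set. At such points, however, $\nabla_{\q_i}I=0$ by Proposition \ref{vanish_of_I}, while the scalar coefficient $\sin[2d(\q_i,\S^1_{zw})]$ (respectively $\sinh[2d(\q_i,\H^1_{zw})]$) also vanishes, since the distance is then $0$ or $\pi/2$; hence both sides of \eqref{CCE2} are zero and the identity persists with the convention that the right-hand side is read as $0$ whenever that coefficient vanishes. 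I would also note in passing that $I$ is globally smooth — indeed $\sin^2 d(\q_i,\S^1_{zw})=x_i^2+y_i^2$ from the proof of Lemma \ref{inertia} — so no genuine regularity difficulty arises; only the particular way of displaying $\nabla_{\q_i}I$ through $\nabla_{\q_i}d$ needs the above remark.
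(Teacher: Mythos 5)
Your proposal is correct and follows essentially the same route as the paper: rewrite $I$ via Lemma \ref{inertia} as a sum of $m_j\sn^2 d(\q_j,\M^1_{zw})$ and apply the chain rule to get $\nabla_{\q_i}I=m_i\sn[2d(\q_i,\M^1_{zw})]\nabla_{\q_i}d(\q_i,\M^1_{zw})$, after which the equivalence with \eqref{CCE} is immediate. Your additional remark handling the locus where $d(\cdot,\S^1_{zw})$ is not differentiable (where both $\nabla_{\q_i}I$ and the coefficient $\sin[2d]$ vanish, consistently with Proposition \ref{vanish_of_I}) is a point the paper passes over silently, and it is handled correctly.
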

\begin{proof}
The previous proposition yields 
\[ \nabla_{\q_i} I= m_i \sin [2 d(\q_i, \S^1_{zw})]\nabla_{\q_i}d(\q_i, \S^1_{zw}), \]
 \[ \nabla_{\q_i} I= m_i\sinh [2 d(a, \H^1_{zw})]\nabla_{\q_i}d(\q_i, \H^1_{zw}). \]
 Then the first central configuration equation is equivalent to \eqref{CCE2}. 
\end{proof}

Let us give some physical interpretation for an ordinary central configuration. Denote by $\M^1_{zw}$ either of the geodesics $\S^1_{zw}$ and $\H^1_{zw}$. Then an ordinary central configuration is a special position of the particles in $\M^3$  with the property that the gravitational acceleration vector produced on each particle by all the others particles points toward the geodesic $\M^1_{zw}$ and is proportional to $\sn [2 d(\q_i, \M^1_{zw})]$. By the definition of special central configurations, we see that they are  special arrangements of the particles such that $\F_i=\sum_{j=1, j\ne i}^{N} \F_{ij} =0$ for each $i=\overline{1,N}$.  

This geometrical description of the vector field $\nabla (x^2+y^2)$ brings restrictions to the condition for  ordinary $\M^2$ central configurations.  Recall that  they  are ordinary central configurations lying on a great sphere or a great hyperbolic sphere $\M^2$.  Let $\q$ be such a central configuration on $\M^2$. $\nabla_{\q_i}  U$ is always  tangential   to $(\M^2)^N$ from how the gravitational law is defined in $\M^3$. But the minimal geodesic connecting $\q_i$ and $\S^1_{zw}$ $(\H^1_{zw})$ may not lie on that particular $\M^2$, thus $\nabla_{\q_i} I$ might not be tangential  to  
$(\M^2)^N$, which means that the configuration cannot be central. Although we can find restrictions to $\M^2$ following this  geometric approach, we will further use an analytic argument, which is easier to explain.

\begin{proposition} \label{2d_CC1}
    Consider the nonempty set 
\[  \M^2:=  \{ (x,y,z,w)^T\in \M^3\ \! |\ \! ax+by+cz+dw=0\},  \]
and suppose that there exists an ordinary central configuration in $\M^2$.  Then  $(a,b)=(0,0)$ or $(c,d)=(0,0)$.   
     \end{proposition}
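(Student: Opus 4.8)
The plan is to run the argument through the constant $\lambda$ together with the observation that all the forces stay inside the linear subspace that cuts out $\M^2$. Write $H:=\{(x,y,z,w)^T\in\R^4\,|\,ax+by+cz+dw=0\}$, so $\M^2=H\cap\M^3$. First I would note that $\lambda\neq0$ in equation \eqref{CCE}: if $\lambda=0$, then $\nabla_{\q_i}U(\q)=0$ for every $i$, i.e.\ $\q$ would satisfy \eqref{SCCE} and be special, contrary to being ordinary. Next, $\nabla_{\q_i}U(\q)=\F_i=\sum_{j\neq i}\F_{ij}$ lies in $H$: each $\F_{ij}=\frac{m_im_j(\q_j-\csn d_{ij}\,\q_i)}{\sn^3 d_{ij}}$ is a scalar multiple of $\q_j-\csn d_{ij}\,\q_i$, hence a linear combination of $\q_i$ and $\q_j$, and both of these lie in the linear subspace $H$. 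Dividing the first central configuration equation by $\lambda$ then gives $\nabla_{\q_i}I(\q)\in H$ for every $i=\overline{1,N}$.

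The second step is to turn $\nabla_{\q_i}I(\q)\in H$ into linear conditions on $\q_i$. I would substitute the explicit expression for $\nabla_{\q_i}I(\q)$ recorded after Criterion \ref{criterion} (and reused in Proposition \ref{vanish_of_I}). Applying the coefficients $(a,b,c,d)$ to the components of $\nabla_{\q_i}I(\q)$ and cancelling $2m_i$, the membership $\nabla_{\q_i}I(\q)\in H$ reads $(ax_i+by_i)(w_i^2+z_i^2)-(cz_i+dw_i)(x_i^2+y_i^2)=0$ in $\S^3$, and $(ax_i+by_i)(w_i^2-z_i^2)+(cz_i+dw_i)(x_i^2+y_i^2)=0$ in $\H^3$. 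Now $\q_i\in\M^2$ gives $cz_i+dw_i=-(ax_i+by_i)$; substituting this and using the constraint $x_i^2+y_i^2+z_i^2+\sigma w_i^2=\sigma$, each identity collapses in one line to $ax_i+by_i=0$, and hence also $cz_i+dw_i=0$, for every $i$.

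The last step is the dichotomy. If both $(a,b)\neq(0,0)$ and $(c,d)\neq(0,0)$, then $\{(x,y)\,|\,ax+by=0\}$ and $\{(z,w)\,|\,cz+dw=0\}$ are one-dimensional, so every $\q_i$ lies in the $2$-dimensional linear subspace $L:=\{ax+by=0\}\cap\{cz+dw=0\}$ of $\R^4$, and the whole configuration therefore sits on the geodesic $L\cap\M^3$. Ruling out this possibility gives $(a,b)=(0,0)$ or $(c,d)=(0,0)$. I expect this final step to be the delicate one: the linear algebra by itself only traps $\q$ in a $2$-plane, and upgrading that to the stated alternative relies on $\q$ being a genuine $\M^2$ central configuration, not a geodesic one, so the geodesic case has to be separated off — for a configuration contained in a geodesic, every great $2$-sphere passing through that geodesic still contains it, and some of those have both $(a,b)$ and $(c,d)$ nonzero.
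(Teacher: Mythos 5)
Your computation coincides with the paper's: the paper likewise observes that $\nabla_{\q_i}U$, being a linear combination of the position vectors $\q_j$, lies in the hyperplane $H=\{ax+by+cz+dw=0\}$, writes the membership $\nabla_{\q_i}I\in H$ as $(ax_i+by_i)(w_i^2+\sigma z_i^2)-(cz_i+dw_i)(x_i^2+y_i^2)=0$, and collapses this to $ax_i+by_i=0=cz_i+dw_i$ using the constraint $\q_i\cdot\q_i=\sigma$. Your explicit remark that $\lambda\ne 0$ for an ordinary central configuration is left tacit in the paper but is exactly what lets one pass from \eqref{CCE} to $\nabla_{\q_i}I\in H$, so that is a small improvement rather than a deviation.

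The only real difference is the last step, and the delicacy you flag there is genuine. The paper jumps from the pointwise identities $ax_i+by_i=0=cz_i+dw_i$ to the claim that $H$ coincides with $\{ax+by=0\}\cap\{cz+dw=0\}\cap H$, hence that the three normal vectors span a line; but the identities were derived only at the $N$ configuration points, so this inference is valid precisely when the $\q_i$ span the $3$-dimensional subspace $H$. In that case the argument closes as you indicate: the functionals $ax+by$ and $cz+dw$ vanish on a spanning set of $H$, hence on $H$, so $(a,b,0,0)$ and $(0,0,c,d)$ both lie in $H^{\perp}=\mathrm{span}\{(a,b,c,d)\}$, which forces $(a,b)=(0,0)$ or $(c,d)=(0,0)$. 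If the $\q_i$ do not span $H$, they span a subspace of dimension at most two and the configuration is geodesic, and then the literal conclusion can fail, exactly as you suspect: an ordinary geodesic central configuration on $\S^1_{xz}$ (Section \ref{example}) lies on the great sphere $\{y+w=0\}\cap\S^3$, for which $(a,b)=(0,1)$ and $(c,d)=(0,1)$ are both nonzero. So the proposition carries the implicit hypothesis that the configuration is not contained in a geodesic (that case is treated separately through Proposition \ref{3} and Theorem \ref{1d_CC}); under that hypothesis your write-up, completed by the spanning observation above, is a rigorous version of the paper's proof rather than a different one.
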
 
 \begin{proof}
 Recalling that $\nabla _{\q_i}U$ is a linear combination of the position vectors,  we see that $\nabla _{\q_i}U$ lies  on the 3-dimensional space  $ax+by+cz+dw=0$ in $\R^4$. Thus to have an ordinary central configuration, it is necessary that
 \[ \nabla (x^2+y^2)=(x(w^2+ \sigma z^2),y(w^2+  \sigma z^2), -\sigma  z(x^2+ y^2), - \sigma  w(x^2+ y^2) )^T\]
also lies in this 3-dimensional space, i.e., 
 \begin{equation*} 
  \begin{split}
  0&=ax(w^2+ \sigma z^2)+by(w^2+  \sigma z^2) -c\sigma  z(x^2+ y^2) -d \sigma  w(x^2+ y^2)\\
  &=(ax+by)(w^2+ \sigma z^2)-(cz+dw)(x^2+ y^2).\\
  \end{split}
  \end{equation*}
Using that $ax+by+cz+dw=0$ and $(x^2+ y^2)=\sigma-\sigma(w^2+  \sigma z^2)$, we obtain
\[ ax+by=0, \ \ cz+dw=0.  \] 
Thus the original  3-dimensional space $ax+by+cz+dw=0$ is equivalent to the intersection of the spaces,
\[  \{ax+by=0, \ \ cz+dw=0, \ \ ax+by+cz+dw=0\}. \]
So the linear space spanned by the three vectors $(a,b,0,0)$, $(0,0,c,d)$, and $(a,b,c,d)$ is the 1-dimensional space spanned by $(a,b,c,d)$, which implies that we have either $(a,b)=(0,0)$ or $(c,d)=(0,0)$.   
 \end{proof}

\section{Equivalent central configurations}\label{count}
In this section we find a way to count central configurations.
With a convention to be introduced soon, we will see that that there are infinitely many central configurations for three equal masses on $\S^3$.  Then we will show that any $\M^2$ central configuration is equivalent to some central configuration on one of four particular great 2-spheres, and that any geodesic central configuration is equivalent to some central configuration on one of two particular geodesics.

Recall that the central configuration equation for the Newtonian $N$-body problem in Euclidean space is given by the system 
$$ \nabla_{\q_i}U(\q)=\lambda \nabla_{\q_i} \sum_{i=1}^N m_i(x_i^2+y_i^2+z_i^2),\ i=\overline{1,N},$$
which is invariant under the Euclidean similarities of $\R^3-$ dilations and the isometry group $O(3)$. Thus we call two central configurations $\q$ and $\q'$ \emph{equivalent} if there is a 
 constant $k\in \R$ and a $3\times 3$ orthogonal matrix $Q$ such that $\q_i'=kQ\q_i,$  $i=\overline{1,N}$. So we count central configurations by counting the corresponding equivalence classes.

Things are different for central configurations of the curved $N$-body problem is different. First, we do not have dilations  since the space $\M^3$ is not linear; second, while the  special central configuration equation,  $\nabla_{\q_i}U=0 $,  is obviously invariant under the group  $O(4)$,  the ordinary central configuration equation, $\nabla_{\q_i}U=\lambda \nabla_{\q_i}I$, is not invariant under the isometry group, a fact implied by Proposition \ref{2d_CC1}.  Nevertheless, we can define equivalent classes of central configurations using the subgroup that preserves the central configuration equation. 
\begin{definition}\label{equi-central configuration}
Let  
$$\q=(\q_1,\dots,\q_N),\ \q_i=(x_i,y_i,z_i,w_i)^T,\   \ i=\overline{1,N},$$
$$\q'=(\q'_1,\dots,\q'_N),\ \q'_i=(x'_i,y'_i,z'_i,w'_i)^T,\   \ i=\overline{1,N},
$$ 
be two central configurations in $\M^3$.
If $\q$ and $\q'$ are  special central configurations,  then we call them equivalent if there is  $\phi\in SO(4)$, such that $\q_i=\phi\q_i', i=\overline{1,N}$.
If $\q$ and $\q'$ are ordinary central configurations,  then we call them equivalent if there is  $\phi=(\phi_1, \phi_2)\in SO(2)\times SO(2)$  $(SO(2)\times SO(1,1))$, such that $\q_i=\phi\q_i'$ for each $i=\overline{1,N}$, where the action of $\phi$ is understood such that $\phi_1$ acts on the $xy$-plane and $\phi_2$ on the $zw$-plane. 
\end{definition}  
We would like to point out that though the central configuration equation is actually invariant under the action of $O(2)\times O(2)$ or $O(2)\times O(1,1)$, we adopt this definition to keep consistency with the critical point formulation, which will be introduced in Section \ref{criticalpoint}. 
 
Let us now justify the definition, i.e.\ show that the central configuration equation is only invariant under the subgroup $O(2)\times O(2)$ or $O(2)\times O(1,1)$. This is intuitively 
easy to see since only this subgroup keeps $I$, the moment of inertia, invariant. More precisely, let $X(\q)=\nabla _{\q_i}U(\q) -\lambda \nabla _{\q_i}I(\q)$ be the vector field on $(M^3)^N\setminus \D$ defined by the central configuration equation, and $\phi$ an element of the isometry group. We need to show that 
 \begin{center}
 $\phi_* X(\q)= X(\phi \q)$ if and only if $\phi \in O(2)\times O(2)$ or $O(2)\times O(1,1)$, 
 \end{center}
 where $\phi_*$ is the tangent map associated to  $\phi$ and it is just $\phi$ since $\phi$ is a linear map.  
First, note that the force function $U$ depends on the mutual distances between bodies, thus  $U(\q)=U(\phi \q)$. Notice that $\nabla_{\q'_i} U(\q)$ and $\nabla_{\q_i} U(\q)$ are two $(1,0)$ tensors, and we have 
\[ \nabla_{\q'_i} U(\phi \q) =\nabla_{\q'_i} U(\q) = \phi \nabla_{\q_i} U(\q).\]
Second, notice that $I(\phi \q)= \sum_{1\le i\le N} m_i \sn^2 d(\phi \q_i, \M^1_{zw}),$ then
\[  \nabla_{\q'_i} I(\phi \q) =\phi\nabla_{\q_i} I(\phi \q)  = \phi m_i \sn [2 d(\phi \q_i, \M^1_{zw})]\nabla_{\q_i}d(\phi \q_i, \M^1_{zw}). \] 
Thus we obtain that
$$X(\phi \q)=\phi \left(\nabla_{\q_i} U(\q) - \lambda m_i \sn [2 d(\phi \q_i, \M^1_{zw})]\nabla_{\q_i}d(\phi \q_i, \M^1_{zw})\right). $$
Then $\phi X(\q)= X(\phi \q)$ implies that
\[ \sn [2 d(\phi \q_i, \M^1_{zw})]\nabla_{\q_i}d(\phi \q_i, \M^1_{zw}) = \sn [2 d( \q_i, \M^1_{zw})]\nabla_{\q_i}d( \q_i, \M^1_{zw}), \]
which holds if and only if $\phi$ preserves the function $d(\q_i, \M^1_{zw})$, so then it must belong to the claimed subgroup. \\

\subsection{A counting example}
Let us now illustrate this convention by counting the
number of  central configurations in the following  example of Lagrangian central configurations on 
 $$\S_{xyz}^2:=\{(x,y,z,w)^T\in \R^4|x^2+y^2+z^2+w^2=1, w=0\},$$
see Figure \ref{fig:lag+}.
These central configurations have already appeared in Section \ref{relativeequilibria} in connection with our first example of relative equilibria and we will discuss them in more detail in Section \ref{example}. 
 So let us place three equal masses $m_1=m_2=m_3=1$ at 
 $$\q=(\q_1,\q_2, \q_3),\ \q_j=(x_j,y_j,z_j,w_j)^T,\ j=1,2,3,$$
 $$
 x_j=\sqrt{1-c^2}\cos  \beta_j,\ \ y_j=\sqrt{1-c^2}\sin \beta_j,\ \ z_j=c,\ \ w_j=0,\ \  \beta_j = \frac{2\pi j}{3},
 $$
where $c$ could have any value between  $-1$ and $1$. It is easy to verify that these are all central configurations. By the convention we introduced,  rotating the central configurations in the $xy$-plane does not lead to new central configurations, and the rotated ones still remain on the original 2-sphere; rotating them in the $zw$-plane does not lead to new central configurations either, although they will not remain on the original 2-sphere. Though all these central configurations are similar in some sense, there does not exist an element in $SO(2)\times SO(2)$ to relate any two of them. Thus we see that there is an infinite number of equivalent classes of central configurations for the three equal masses. In other words, the set they form has the power of the continuum.  

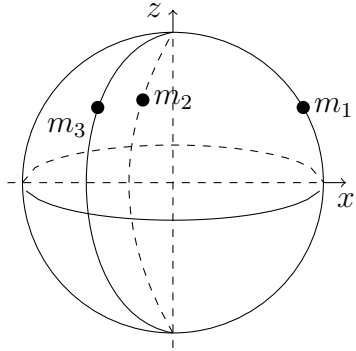
\begin{figure}[!h]
      	\centering
         \begin{tikzpicture}
         \draw (0,  0) circle (2);
         \draw [dashed] (0, -2.2)--(0, 2);
        \draw [->] (0, 2)--(0, 2.3) node (zaxis) [left] {$z$};
         \draw [dashed] ( -2.2,0)--(2,0);
        \draw [->] (2, 0)--( 2.3,0) node (yaxis) [below] {$x$};
         \draw [dashed] [domain=-2:2] plot (\x, {sqrt(4-\x*\x)/4}); 
                \draw  [domain=-1.95:1.95] plot (\x, {-sqrt(4-\x*\x)/4});

          \draw[dashed]  (0,-2) to [out=120,in=240] (0,2);
          \draw  (0,-2) to [out=170,in=190] (0,2);

         \fill (30:2) circle (2.5pt) node[right] {$m_1$};
          \fill ( -0.4,1.1) circle (2.5pt) node[ right] {$m_2$}; 
           \fill ( -1,1) circle (2.5pt) node[below left] {$m_3$};  
          
          \end{tikzpicture}
     \caption{Lagrangian  central configurations on $\S^2_{xyz}$    }
       \label{fig:lag+}  
       \end{figure}    
We will see that this property is common for all given masses, and we will return to this topic after we prove the existence of central configurations for any given masses in Section \ref{criticalpoint}.\\

\subsection{Reduction results}
 We will further show how some types of central configurations can be studied in simpler settings. Let us denote
 \begin{equation*} 
 \begin{split}
 \S_{xyz}^2:&=\{(x,y,z,w)^T\in \R^4\ \! |\ \!  x^2+y^2+z^2+w^2=1, \ w=0\},\\
 \S_{xzw}^2:&=\{(x,y,z,w)^T\in \R^4 \ \! |\ \!   x^2+y^2+z^2+w^2=1, \ y=0\},\\
 \H_{xyw}^2:&=\{(x,y,z,w)^T\in \R^4 \ \! |\ \!   x^2+y^2+z^2-w^2=-1, \ z=0\},\\
 \H_{xzw}^2:&=\{(x,y,z,w)^T\in \R^4 \ \! |\ \!   x^2+y^2+z^2-w^2=-1, \ y=0\}.
 \end{split}
 \end{equation*}
With the above  definition of equivalent central configurations,  the result in Proposition \ref{2d_CC1} can be restated as follows. 
 \begin{theorem}\label{2d_CC2}
 Any $\H^2$ central configuration  is equivalent to some  central configuration on $\H_{xyw}^2$ or $\H_{xzw}^2$.  Any $\S^2$ central configuration  is equivalent to some  central configuration on $\S_{xyz}^2$ or $\S_{xzw}^2$. Furthermore, there is a one-to-one correspondence between the central configurations on  $\S_{xyz}^2$ and the central configurations on  $\S_{xzw}^2$.
 \end{theorem}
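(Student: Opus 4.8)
The plan is to deduce all three assertions from Proposition \ref{2d_CC1} together with the description of the equivalence relation on central configurations. First I would recall that, by Definition \ref{equi-central configuration}, two ordinary central configurations are equivalent when related by an element of $SO(2)\times SO(2)$ (in $\S^3$), acting by $\phi_1$ on the $xy$-plane and $\phi_2$ on the $zw$-plane, while special central configurations are related by the full $SO(4)$; in either case the relevant group acts on each body vector $\q_i$ simultaneously. So the task splits into the ordinary and the special cases, and in the ordinary case the content of Proposition \ref{2d_CC1} is exactly what makes the argument work.

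For the $\S^2$ statement, let $\q$ be an $\S^2$ central configuration, so it lies on some great $2$-sphere $\M^2=\{ax+by+cz+dw=0\}\cap\S^3$. If $\q$ is ordinary, Proposition \ref{2d_CC1} forces $(a,b)=(0,0)$ or $(c,d)=(0,0)$. In the first case the defining equation is $cz+dw=0$ with $(c,d)\ne(0,0)$, so a rotation $\phi_2\in SO(2)$ in the $zw$-plane can be chosen to send the line $cz+dw=0$ to $w=0$ (take $\phi_2$ the rotation by the angle bringing $(c,d)$ to a multiple of $(0,1)$), hence $\phi=(\mathrm{id},\phi_2)\in SO(2)\times SO(2)$ carries $\q$ to a central configuration on $\S^2_{xyz}$. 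In the second case, $ax+by=0$ with $(a,b)\ne(0,0)$; a rotation $\phi_1\in SO(2)$ in the $xy$-plane sends this line to $y=0$, so $\phi=(\phi_1,\mathrm{id})$ carries $\q$ to a central configuration on $\S^2_{xzw}$. If instead $\q$ is special, then $\nabla_{\q_i}U=0$ for all $i$ and the equation is invariant under all of $SO(4)$; one may then rotate the great $2$-sphere $\M^2$ to $\S^2_{xyz}$ by a suitable element of $SO(4)$, and such special configurations are trivially also special on $\S^2_{xyz}$ (and likewise on $\S^2_{xzw}$). The $\H^2$ statement is identical, replacing $SO(2)\times SO(2)$ by $SO(2)\times SO(1,1)$ and noting that in $\H^3$ one needs the normalized form $ax+by+cz+dw=0$ with the reduction $(a,b)=(0,0)$ or $(c,d)=(0,0)$ from Proposition \ref{2d_CC1}; in the first subcase $cz+dw=0$ becomes $z=0$ (giving $\H^2_{xyw}$) under an element of $SO(1,1)$ acting on the $zw$-plane, and in the second $ax+by=0$ becomes $y=0$ (giving $\H^2_{xzw}$) under $SO(2)$.

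For the final assertion — the one-to-one correspondence between central configurations on $\S^2_{xyz}$ and on $\S^2_{xzw}$ — I would exhibit an explicit orthogonal map of $\R^4$ that interchanges the coordinates $y$ and $w$, namely $\tau(x,y,z,w)=(x,w,z,y)$. This $\tau\in O(4)$ maps $\S^3$ to $\S^3$, carries the slice $\{w=0\}$ to the slice $\{y=0\}$, and hence restricts to a bijection $\S^2_{xyz}\to\S^2_{xzw}$. It remains to check that $\tau$ takes central configurations to central configurations: since $\tau$ preserves mutual distances it preserves $U$ and all $\nabla_{\q_i}U$ up to the linear action of $\tau$, and since on $\S^2_{xyz}$ one has $w_i=0$ while on $\S^2_{xzw}$ one has $y_i=0$, the function $I=\sum m_i(x_i^2+y_i^2)$ on $\S^2_{xyz}$ matches $\sum m_i(x_i^2+w_i^2)$, which is what $\tau$ turns the standard $I$ on $\S^2_{xzw}$ into — so on these two slices the map $\tau$ intertwines the two central configuration equations exactly, with the same constant $\lambda$. (One should note $\tau$ has determinant $-1$, which is harmless here because we are only asserting a bijection between the two sets of central configurations, not an $SO$-equivalence.) The main obstacle, and the point deserving care, is precisely this last compatibility check: one must verify that when restricted to configurations with $w_i=0$ (resp.\ $y_i=0$) the gradient $\nabla_{\q_i}I$ — computed as a gradient on the full $\S^3$, with the $(w^2+z^2)$ and $(x^2+y^2)$ factors of the formula preceding Proposition \ref{vanish_of_I} — transforms correctly under $\tau$, so that a solution of the central configuration system on one slice really is carried to a solution on the other and the correspondence is genuinely a bijection of solution sets rather than merely of ambient point sets.
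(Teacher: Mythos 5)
Your treatment of the first two assertions is correct and follows essentially the same route as the paper: invoke Proposition \ref{2d_CC1} to force $(a,b)=(0,0)$ or $(c,d)=(0,0)$, then apply a rotation in the appropriate coordinate plane (an element of $SO(2)\times SO(2)$, resp.\ $SO(2)\times SO(1,1)$) to bring the great sphere to standard position; the special case is handled by the full isometry group. No issue there.

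The proof of the third assertion, however, has a genuine error. Your map $\tau(x,y,z,w)=(x,w,z,y)$ does carry the point set $\S^2_{xyz}$ bijectively onto $\S^2_{xzw}$, but it does \emph{not} carry central configurations to central configurations. The central configuration equation involves $\nabla_{\q_i}I$ computed on the ambient $(\S^3)^N$, and $I\circ\tau=\sum_i m_i(x_i^2+w_i^2)$ is a genuinely different function from $I=\sum_i m_i(x_i^2+y_i^2)$: it is not of the form $\pm I+\mathrm{const}$, so its gradient is not a multiple of $\nabla I$ even at points with $w_i=0$. Concretely, at $w_i=0$ one has
\begin{equation*}
\nabla_{\q_i}I=2m_i\bigl(x_iz_i^2,\ y_iz_i^2,\ -z_i(x_i^2+y_i^2),\ 0\bigr)^T,
\qquad
\nabla_{\q_i}(I\circ\tau)=2m_i\bigl(x_i(y_i^2+z_i^2),\ -y_ix_i^2,\ -z_ix_i^2,\ 0\bigr)^T,
\end{equation*}
which are not proportional once $x_iy_i\neq0$ (the $y$-components even have opposite signs). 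Testing $\tau$ on the Lagrangian central configuration of Section \ref{example} with $\q_2=(-r/2,\,r\sqrt3/2,\,z,\,0)$, the two candidate values of $\lambda'$ obtained by comparing the third and fourth components of $\nabla_{\q_2'}U=\lambda'\nabla_{\q_2'}I$ are $4\lambda$ and $-4z^2\lambda/r^2$, which cannot agree; so $\tau\q$ is not a central configuration for any $\lambda'$. The fix is to use the block swap $\varphi(x,y,z,w)=(z,w,x,y)$, which exchanges the $xy$-plane with the $zw$-plane as a pair: then $I\circ\varphi=\sum_i m_i-I$, hence $\nabla_{\q_i}(I\circ\varphi)=-\nabla_{\q_i}I$ and $\varphi$ sends a central configuration with constant $\lambda$ on $\S^2_{xyz}$ to one with constant $-\lambda$ on $\S^2_{xzw}$ (note the sign: even with the correct map the constant is not preserved, contrary to your claim of ``the same constant $\lambda$'').
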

 \begin{proof}
 The above statement is obvious for special central configurations. For an ordinary $\M^2$ central configuration $\q$, by Proposition \ref{2d_CC1},  we assume that it lies on  
 \[ \{ (x,y,z,w)^T\in \M^3\ \!|\ \! \cos \th_1x+\sin \th_1y=0\}. \]
 Let $\phi=(\phi_1, id)$, and
  \[ \phi_1=\begin{bmatrix}
   \cos(\pi/2-\th_1) & -\sin (\pi/2-\th_1) \\
   \sin (\pi/2-\th_1) &\cos(\pi/2-\th_1) \end{bmatrix}=\begin{bmatrix}
     \sin \th_1 & -\cos \th_1 \\
     \cos \th_1  &\sin \th_1  \end{bmatrix}\in SO(2).\]
 Then $\phi (x,y,z,w)^T=(\sin \th_1 x-\cos \th_1 y, 0, z,w)$. Hence $\q$ is equivalent to $\phi \q$, which is on $\S_{xzw}^2$  $(\H_{xzw}^2)$. 
 
 Similarly, we can show that an  $\M^2$ central configuration $\q$  on  
  \[ \{ (x,y,z,w)^T\in \M^3 \ \! |\ \!   cz+dw=0\} \]
   is  equivalent to some central configuration on $\S_{xyz}^2$  $(\H_{xyw}^2)$. 
   
   Let $\q$ be  a central configuration  on $\S^2_{xyz}$, i.e.\ $\nabla _{\q_i}U(\q) -\lambda \nabla _{\q_i}I(\q)=0, i=\overline{1,N}$.  Consider the orthogonal transformation given by  
   \[ \varphi (x_i,y_i,z_i,w_i)=(z_i,w_i,x_i,y_i). \]
  Then $\q_i'=\varphi \q_i$ is on  $\S^2_{xzw}$ and 
  \[ I(\q')=\sum_{i=1}^N m_i({x'}_i^2+{y'}_i^2)=\sum_{i=1}^N m_i(1-x_i^2-y_i^2)= \sum_{i=1}^N m_i -I(\q). \]
 Recall that $\nabla _{\q'_i}f=\varphi \nabla _{\q_i}f$ for any smooth function $f$. We then obtain that
 \[\nabla _{\q'_i}U(\varphi\q) +\lambda \nabla _{\q'_i}I(\varphi\q)= \varphi\left(\nabla _{\q_i}U(\q) +\lambda \nabla _{\q_i}[-I(\q)]\right)=0.  \] 
  Then the configuration $\q'=\varphi \q$ is a central configuration on $\S^2_{xzw}$, a remark that completes the proof.
 \end{proof}
 
Let us now study the vector fields $\nabla (x^2+y^2)$ on great spheres  and great hyperbolic spheres.  Our goal is to find all the geodesics to which the vector field is tangential. 
 \begin{proposition}\label{3}
Assume that a geodesic $\M^1$ on $\S_{xyz}^2$, $\S_{xyw}^2$,   $\H_{xyw}^2$, or $\H_{xzw}^2$ is given by the nonempty set   
  \[ \{ (x,y,z,w)^T\in \M^2 \ \! |\ \!   ax+by+cz+dw=0 \}. 
  \]
  Then the vector field $\nabla (x^2+y^2)$ is tangential to this geodesic if and only if  $(a,b)=(0,0)$ or $(c,d)=(0,0)$. 
 \end{proposition}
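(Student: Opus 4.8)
The plan is to read Proposition~\ref{3} as the geodesic-by-geodesic counterpart of the computation already carried out in the proof of Proposition~\ref{2d_CC1}, after turning tangency into a single scalar condition along the geodesic.

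First I would note that on each of the four great $2$-spheres $\M^2$ in the statement, the vector field $\nabla(x^2+y^2)$ is already tangent to $\M^2$: from the explicit formula
\[ \nabla(x^2+y^2)=2\big(x(w^2+\sigma z^2),\, y(w^2+\sigma z^2),\, -\sigma z(x^2+y^2),\, -\sigma w(x^2+y^2)\big)^T \]
established before Proposition~\ref{2d_CC1}, the component in the coordinate that is set to zero on $\M^2$ vanishes on $\M^2$. So $\nabla(x^2+y^2)$ restricts to a vector field on $\M^2$. Writing the geodesic as $\M^1=\M^2\cap H$ with $H=\{\,ax+by+cz+dw=0\,\}$, and choosing the coefficient attached to the coordinate dropped in $\M^2$ to be zero (legitimate, since that coordinate vanishes on $\M^2$), the tangent line is $T_\q\M^1=T_\q\M^2\cap H$. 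Hence $\nabla(x^2+y^2)$ is tangent to $\M^1$ at $\q$ precisely when the linear functional $ax+by+cz+dw$ annihilates $\nabla(x^2+y^2)$ there.

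Next I would evaluate that functional. With $f:=x^2+y^2$ and the formula above,
\[ a(\nabla f)_x+b(\nabla f)_y+c(\nabla f)_z+d(\nabla f)_w = 2\big[(ax+by)(w^2+\sigma z^2)-\sigma(cz+dw)(x^2+y^2)\big]. \]
On $\M^1$ one substitutes $ax+by=-(cz+dw)$, and on $\M^3$ one has $(w^2+\sigma z^2)+\sigma(x^2+y^2)=1$; these are the two reductions already used in the proof of Proposition~\ref{2d_CC1}, and together they collapse the right-hand side to $-2(cz+dw)$. Therefore $\nabla(x^2+y^2)$ is tangent to $\M^1$ if and only if $cz+dw\equiv 0$ along $\M^1$.

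It remains to check that this condition is equivalent to $(a,b)=(0,0)$ or $(c,d)=(0,0)$. If $(c,d)=(0,0)$ the condition is automatic, and if $(a,b)=(0,0)$ then $H=\{cz+dw=0\}$, so the condition holds on $\M^1$ by definition; this gives one implication. For the converse, suppose $cz+dw\equiv 0$ on $\M^1$ while $(c,d)\ne(0,0)$. Subtracting from $ax+by+cz+dw=0$ shows $ax+by\equiv 0$ on $\M^1$ as well, so $\M^1$ lies in the codimension-two linear subspace $\{ax+by=0\}\cap\{cz+dw=0\}$; since $\M^1$ is a genuine geodesic it spans a $2$-plane, which must then coincide with that subspace, and this $2$-plane simultaneously lies in the $3$-space cutting out $\M^2$. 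I expect the main, and only genuinely new, obstacle to be showing these requirements are incompatible unless $(a,b)=(0,0)$: for each of the listed $2$-spheres this is a short linear-algebra verification that uses both the coordinate suppressed on $\M^2$ and the normalization of $(a,b,c,d)$ fixed above.
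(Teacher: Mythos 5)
Your proposal is correct and follows the same route as the paper, whose proof of Proposition \ref{3} consists of invoking verbatim the computation from Proposition \ref{2d_CC1}: reduce tangency to the vanishing of $(ax+by)(w^2+\sigma z^2)-\sigma(cz+dw)(x^2+y^2)$ along the geodesic and use $ax+by+cz+dw=0$ together with the defining equation of $\M^3$ to force $ax+by=cz+dw=0$ there. The linear-algebra step you defer is indeed routine and succeeds in all four cases (once the coefficient of the suppressed coordinate is normalized to zero, the $2$-plane spanned by the geodesic would have to sit inside $\{ax+by=0\}\cap\{cz+dw=0\}\cap E$, which is at most $1$-dimensional when both pairs are nonzero), and your explicit normalization plus dimension count is in fact more careful than the paper's one-sentence proof, which needs exactly these two points to transfer the Proposition \ref{2d_CC1} argument to a set that spans only a $2$-plane rather than the full hyperplane.
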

  \begin{proof}
   We can use here the same argument as in the proof of Proposition \ref{2d_CC1}. 
  \end{proof}
  We sketch these vector fields in Figures \ref{fig:nablaI1} and \ref{fig:nablaI2}, with the explanations below. 
 
  In $\S^2_{xyz}$, $(a,b)=(0,0)$ leads to the great circle $\S^1_{xy}$, and $(c,d)=(0,0)$ gives the great circles passing through $(0,0,1,0)$.
  
   In $\S^2_{xzw}$, $(a,b)=(0,0)$ leads to the great circles passing through $(1,0,0,0)$, and $(c,d)=(0,0)$ gives the great circle $\S^1_{zw}$. 
   
In $\H^2_{xyw}$, $(a,b)=(0,0)$ leads to an empty set, and $(c,d)=(0,0)$ gives the great hyperbolic circles passing through $(0,0,0,1)$.
   
  In $\H^2_{xzw}$, $(a,b)=(0,0)$ leads to the great hyperbolic circles
  \[ \{ (x,y,z,w)^T\in \H^2_{xzw}| \cosh \th z- \sinh \th w=0 \}, \ \th \in \R, \]
  and $(c,d)=(0,0)$ gives the great hyperbolic circles $\H^1_{zw}$.

\begin{figure}[!h]
 	\centering
    \begin{tikzpicture}
    \vspace*{0cm}\hspace*{-4cm}
    \draw (0,  0) circle (2);
        \draw [dashed] (0, -2.2)--(0, 2);
        \draw [dashed] ( -2.2,0)--( 2,0);
        \draw [dashed] ( .8,0.5)--( -0.8,-0.5);
       \draw [->] (0, 2)--(0, 2.3) node (zaxis) [left] {$z$};
       \draw [->] (2,0)--( 2.3,0) node (zaxis) [below] {$y$};
       \draw [->] ( -0.8,-0.5)--(-1,  -0.625)node (zaxis) [below] {$x$};
         \draw [dashed] [domain=-2:2] plot (\x, {sqrt(4-\x*\x)/4}); 
                \draw  [domain=-1.95:1.95] plot (\x, {-sqrt(4-\x*\x)/4});

        \draw (0,2) arc (135:225:2.8);
         \draw (0,-2) arc (-45:45:2.8); 
            
        \draw[->] (70:2) -- (69:2.);   
           \draw[->] (20:2) -- (19:2);
           
               \draw[->] (110:2) -- (111:2.);
               \draw[->] (160:2) -- (161:2);
            
            \draw[->] (200:2) -- (199:2.);       
            \draw[->] (250:2) -- (249:2);
            
            \draw[->] (290:2) -- (291:2.);        
            \draw[->] (340:2) -- (341:2);
        
         \draw[->] (.22,1.7) -- (.25,1.65);
           \draw[->] (.8,.2) -- (.8,0);
           
           \draw[->] (-.24,1.7) -- (-.27,1.65);
              \draw[->] (-.8,.2) -- (-.8,0);
              
              \draw[->] (.3,-1.6) -- (.4,-1.5);
                 \draw[->] (.74,-.7) -- (.75,-.6);
                 
                \draw[->] (-.3,-1.6) -- (-.4,-1.5);
                         \draw[->] (-.74,-.7) -- (-.75,-.6);
           
         \draw [fill] (90:2) circle [radius=0.05];
         \draw [fill] (0:2) circle [radius=0.05];
         \draw [fill] (180:2) circle [radius=0.05];
         \draw [fill] (270:2) circle [radius=0.05]; 
         
             \draw [fill] (.78,-.45) circle [radius=0.05];
              \draw [fill] (-0.78,-.45) circle [radius=0.05]; 
\vspace*{0cm}\hspace*{8 cm}           
     \draw (0,  0) circle (2);
        \draw [dashed] (0, -2.2)--(0, 2);
        \draw [dashed] ( -2.2,0)--( 2,0);
        \draw [dashed] ( .8,0.5)--( -0.8,-0.5);                            
       \draw [->] (0, 2)--(0, 2.3) node (zaxis) [left] {$x$};
       \draw [->] (2,0)--( 2.3,0) node (zaxis) [below] {$w$};
        \draw [->] ( -0.8,-0.5)--(-1,  -0.625)node (zaxis) [below] {$z$};
        \draw [dashed] [domain=-2:2] plot (\x, {sqrt(4-\x*\x)/4}); 
        \draw  [domain=-1.95:1.95] plot (\x, {-sqrt(4-\x*\x)/4});

    \draw (0,2) arc (135:225:2.8);
     \draw (0,-2) arc (-45:45:2.8); 
        
    \draw[->] (69:2.)--(70:2);   
    \draw[->]   (19:2)--(20:2);
    
        \draw[->]  (111:2.)--(110:2);
        \draw[->]   (161:2)--(160:2);
        
       \draw[->]  (199:2)-- (200:2);       
               \draw[->]  (249:2)--(250:2);
               
  \draw[->]   (291:2.)--(290:2);        
 \draw[->]  (341:2)--(340:2);
    
     \draw[->]  (.25,1.65)--(.22,1.7);
       \draw[->]  (.8,0)-- (.8,.2);
       
       \draw[->] (-.27,1.65)-- (-.24,1.7) ;
          \draw[->]   (-.8,0)--(-.8,.2);
          
          \draw[->]  (.4,-1.5)--(.3,-1.6);
             \draw[->]  (.75,-.6)--(.74,-.7);
             
            \draw[->]   (-.4,-1.5)--(-.3,-1.6);
                     \draw[->]  (-.75,-.6)-- (-.74,-.7);
       
     \draw [fill] (90:2) circle [radius=0.05];
     \draw [fill] (0:2) circle [radius=0.05];
     \draw [fill] (180:2) circle [radius=0.05];
     \draw [fill] (270:2) circle [radius=0.05]; 
     
         \draw [fill] (.78,-.45) circle [radius=0.05];
          \draw [fill] (-0.78,-.45) circle [radius=0.05];

     \end{tikzpicture}
\caption{$\nabla (x^2+y^2)$  on $\S_{xyz}^2$  and $\S_{xzw}^2$}
  \label{fig:nablaI1}  
\end{figure}
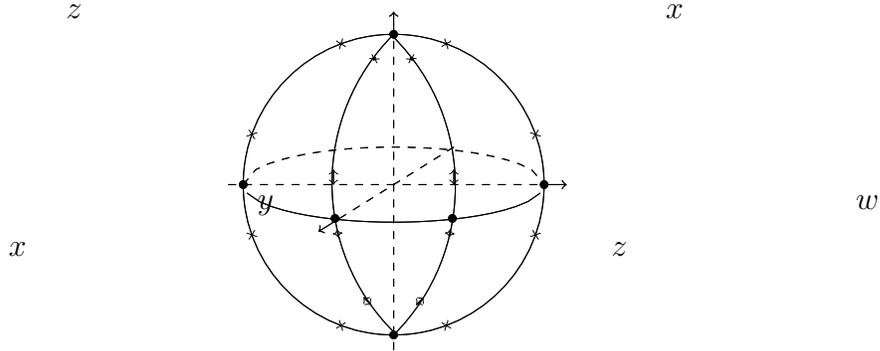

\begin{figure}[!h]
 	\centering
    \begin{tikzpicture}
    \vspace*{0cm}\hspace*{-4cm}
     \draw[black, domain=-2:2] plot (\x, {sqrt(\x*\x+1)});
      \draw [->] (0, 0)--(0, 2.5) node (zaxis) [above] {$w$};
     \draw [->](-1.3, 0)--(1.3, 0) node (yaxis) [right] {$y$};                                                             
      \draw [->]( .8,0.5)--( -0.8,-0.5) node (yaxis) [right] {$x$};
      \draw[black, domain=-0.5:0] plot (\x, {sqrt(30*\x*\x+1)});
       \draw[dashed, domain=0:0.5] plot (\x, {sqrt(30*\x*\x+1)}); 
       
 \draw[black, domain=-1:0] plot (\x, {sqrt(5*\x*\x+1)});
        \draw[dashed, domain=0:1] plot (\x, {sqrt(5*\x*\x+1)});       
  \draw[->] (0.5,1.12)--(0.6,1.17);
  \draw[->] (1,1.42)--(1.1,1.49);
 \draw[->] (-0.5,1.12)--(-0.6,1.17);
   \draw[->] (-1,1.42)--(-1.1,1.49);
 \draw[->] (0.4,1.34)--(0.45,1.42);
 \draw[->] (0.7,1.86)--(0.75,1.95);
   \draw[->] (-0.4,1.34)--(-0.45,1.42);
   \draw[->] (-0.7,1.86)--(-0.75,1.95);
   
   \draw[->] (0.2,1.48)--(0.23,1.6); 
   \draw[->] (0.4,2.4)--(0.43,2.56); 
  \draw[->] (-0.2,1.48)--(-0.23,1.6); 
     \draw[->] (-0.4,2.4)--(-0.43,2.56);  
       
 \fill (0,1) circle (1.6pt);

   \vspace*{0cm}\hspace*{8cm}                   
        
 \draw[black, domain=-2:2] plot (\x, {sqrt(\x*\x+1)});
  \draw [->] (0, 0)--(0, 2.5) node (zaxis) [above] {$w$};
 \draw [->](-1.3, 0)--(1.3, 0) node (yaxis) [right] {$z$};                                                             
  \draw [->]( .8,0.5)--( -0.8,-0.5) node (yaxis) [right] {$x$};
  \draw[black, domain=-0.5:0] plot (\x, {sqrt(30*\x*\x+1)});
         \draw[dashed, domain=0:0.5] plot (\x, {sqrt(30*\x*\x+1)}); 
   
\draw(0.52,1.3)..controls (0.3,0.9) and (0.4,1)..(1.4,2);   
\draw[dashed] (0.52,1.3)--(1.07, 2.3);

\draw[->] (0.2,1.48)--(0.23,1.6); 
   \draw[->] (0.4,2.4)--(0.43,2.56); 
  \draw[->] (-0.2,1.48)--(-0.23,1.6); 
     \draw[->] (-0.4,2.4)--(-0.43,2.56);  
       
\draw[->] (0.6,1.46)--(0.67,1.6); 
 \draw[->] (0.9,2)--(1,2.15);  

\draw[->] (0.8,1.4)--(0.9,1.5); 
\draw[->] (1.2,1.8)--(1.3,1.9); 
    
   \fill (0,1) circle (1.6pt);
  \fill (0.45,1.09) circle (1.6pt);

     \end{tikzpicture}
\caption{$\nabla (x^2+y^2)$  on $\H_{xyw}^2$ and $\H_{xzw}^2$ }
  \label{fig:nablaI2}  
\end{figure}
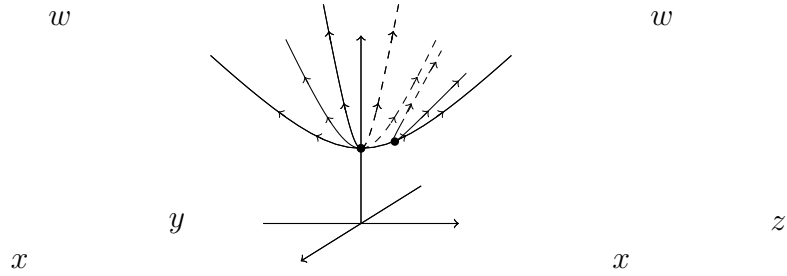

Let us further denote
 \begin{equation*} 
 \begin{split}
 \S^1_{xz}:=&\{(x,y,z,w)^T\in \R^4 \ \! |\ \!   x^2+z^2=1, y=w=0\},\\ 
   \H^1_{xw}:=&\{(x,y,z,w)^T\in \R^4 \ \! |\ \!   x^2-w^2=-1, y=z=0\}.
 \end{split}
 \end{equation*}  
 We can now state and prove the following result.

\begin{theorem}\label{1d_CC}
 Any geodesic central configuration in $\S^3$ is equivalent to some  central configuration on $\S_{xz}^1$.   Any geodesic central configuration in $\H^3$ is equivalent to some  central configuration on $\H_{xw}^1$.
 \end{theorem}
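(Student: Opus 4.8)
The plan is to combine the reduction of Theorem~\ref{2d_CC2} with the tangency analysis of Proposition~\ref{3}. A complete geodesic of $\S^3$ (resp.\ $\H^3$) lies on a great $2$-sphere (resp.\ great hyperbolic $2$-sphere), so every geodesic central configuration is in particular an $\M^2$ central configuration; by Theorem~\ref{2d_CC2} it is therefore equivalent to a central configuration supported on one of $\S^2_{xyz}$, $\S^2_{xzw}$, $\H^2_{xyw}$, $\H^2_{xzw}$. The equivalences of Definition~\ref{equi-central configuration} are realised by linear isometries, so the image is again a geodesic central configuration; and since two of its bodies --- distinct and non-antipodal because the configuration is nonsingular and $N\ge2$ --- lie on the distinguished $2$-sphere and determine a unique geodesic, that geodesic lies on the $2$-sphere. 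Thus it suffices to treat a geodesic central configuration $\q$ supported on one of these four $2$-spheres and produce an equivalent one on $\S^1_{xz}$ (for $\S^3$) or $\H^1_{xw}$ (for $\H^3$). Special central configurations are disposed of immediately: in $\H^3$ there are none, while in $\S^3$ the equivalence relation for special central configurations is the full group $SO(4)$, which acts transitively on great circles, so any $\phi\in SO(4)$ taking the supporting great circle to $\S^1_{xz}$ gives an equivalent central configuration on $\S^1_{xz}$ (the equation $\nabla_{\q_i}U=0$ being $SO(4)$-invariant). Assume henceforth that $\q$ is ordinary.

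For an ordinary $\q$ the constant $\lambda$ in $\nabla_{\q_i}U=\lambda\nabla_{\q_i}I$ is nonzero, since otherwise $\nabla_{\q_i}U\equiv0$ and $\q$ would be special. Each $\nabla_{\q_i}U$ is a linear combination of the position vectors, hence lies in the $2$-plane spanned by the bodies and is tangent to the supporting geodesic $\gamma$; consequently so is $\nabla_{\q_i}I$, i.e.\ $\nabla(x^2+y^2)$ is tangent to $\gamma$ at every body. By Proposition~\ref{3}, applied --- as in Proposition~\ref{2d_CC1} --- at each $\q_i$ using $\q_i\cdot\q_i=\sigma$, there are in each case exactly two possibilities for $\gamma$: on $\S^2_{xyz}$, either $\gamma=\S^1_{xy}$ or $\gamma$ runs through $(0,0,\pm1,0)$; on $\S^2_{xzw}$, either $\gamma=\S^1_{zw}$ or $\gamma$ runs through $(\pm1,0,0,0)$; on $\H^2_{xyw}$, either $\gamma$ is empty or $\gamma$ runs through $(0,0,0,1)$; and on $\H^2_{xzw}$, either $\gamma=\H^1_{zw}$ or $\gamma=\{\cosh\theta\,z-\sinh\theta\,w=0\}\cap\H^2_{xzw}$ for some $\theta\in\R$.

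In each case the first alternative is incompatible with $\q$ being a nonsingular ordinary central configuration: if $\gamma\subseteq\S^1_{xy}\cup\S^1_{zw}$ then $\q$ is special by Corollary~\ref{SCC_on_S1&S1}; if $\gamma=\H^1_{zw}$ then $\nabla_{\q_i}I=0$ for all $i$ by Proposition~\ref{vanish_of_I}, hence $\nabla_{\q_i}U=0$ and $\q$ is again special; and the empty set carries no configuration. In the surviving alternative the relevant family of geodesics is a single orbit of a one-parameter group leaving the distinguished $2$-sphere invariant and containing the target geodesic --- for $\S^2_{xyz}$, the meridians through $(0,0,\pm1,0)$ under rotation of the $xy$-plane (an orbit containing $\S^1_{xz}=\{y=0\}\cap\S^2_{xyz}$); for $\S^2_{xzw}$, the meridians through $(\pm1,0,0,0)$ under rotation of the $zw$-plane (containing $\S^1_{xz}=\{w=0\}\cap\S^2_{xzw}$); for $\H^2_{xyw}$, the hyperbolic meridians through $(0,0,0,1)$ under rotation of the $xy$-plane (containing $\H^1_{xw}=\{y=0\}\cap\H^2_{xyw}$); and for $\H^2_{xzw}$, the curves $\{\cosh\theta\,z-\sinh\theta\,w=0\}\cap\H^2_{xzw}$ under the boost subgroup $SO(1,1)$ of the $zw$-plane (containing $\H^1_{xw}=\{z=0\}\cap\H^2_{xzw}$). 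Choosing the element $\phi\in SO(2)\times SO(2)$ (for $\S^3$) or $\phi\in SO(2)\times SO(1,1)$ (for $\H^3$) that sends $\gamma$ to $\S^1_{xz}$ (resp.\ $\H^1_{xw}$) makes $\phi\q$ the desired equivalent central configuration; composing with the reduction of Theorem~\ref{2d_CC2} and using transitivity of equivalence finishes the proof. The one delicate point is this case bookkeeping --- verifying that the degenerate geodesics flagged by Proposition~\ref{3} ($\S^1_{xy}$, $\S^1_{zw}$, $\H^1_{zw}$, or empty) cannot carry a nonsingular ordinary central configuration (which is exactly where Corollary~\ref{SCC_on_S1&S1}, Proposition~\ref{vanish_of_I}, and the constraint $w>0$ on $\H^3$ enter), and that each surviving family is a single group orbit containing the target geodesic.
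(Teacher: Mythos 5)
Your proposal is correct and follows essentially the same route as the paper: reduce via Theorem~\ref{2d_CC2} to the four distinguished great (hyperbolic) $2$-spheres, invoke Proposition~\ref{3} to pin down the admissible supporting geodesics, discard $\S^1_{xy}$, $\S^1_{zw}$, and $\H^1_{zw}$ because they would force the configuration to be special, and rotate or boost the remaining geodesics onto $\S^1_{xz}$ or $\H^1_{xw}$. You merely make explicit several steps the paper leaves implicit (the $SO(4)$-transitivity argument for special configurations and the use of $\lambda\neq0$ to transfer tangency from $\nabla_{\q_i}U$ to $\nabla_{\q_i}I$), which is fine.
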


\begin{proof}
By Proposition \ref{3}, we see that a geodesic central configuration is possible only if it lies on
$\S_{xyz}^2$, $\S_{xyw}^2$,   $\H_{xyw}^2$, or $\H_{xzw}^2$. Using the same argument as in the proof of Theorem \ref{2d_CC2},  we see that each of those particular geodesics on $\S^2_{xyz}$ and $\S^2_{xzw}$ can be transformed to $\S^1_{xz}$ by some element in $SO(2)\times SO(2)$, and each of  those particular geodesics on $\H^2_{xyw}$ and $\H^2_{xzw}$ can be transformed to $\H^1_{xw}$ or $\H^1_{zw}$ by some element in $SO(2)\times SO(1,1)$. Now recall that $\nabla_{\q_i}I=0$ if $\q_i\in \H^1_{zw}$, so a geodesic central configuration on $\H^1_{zw}$ must be a special central configuration, which does not exist. This remark completes the proof.
\end{proof} 

Consequently, when looking for $\S^2$ central configurations and geodesic central configurations in $\S^3$, it suffices to seek them on $\S^2_{xyz}$ and $\S^1_{xz}$, respectively. When looking for $\H^2$ central configurations and geodesic central configurations in $\H^3$, it suffices to seek them on $\H^2_{xyw}$, $\H^2_{xzw}$, and $\H^1_{xw}$, respectively. This is a bit surprising, since the symmetry groups for central configurations are just subgroups of the isometry groups.

  \section{Criteria for central configurations and the value of $\lambda$}\label{lambda}
  
  In this section we rewrite the central configuration equations, both in $\S^3$ and $\H^3$, and state them as existence criteria. For practical purposes, the new equations are more useful than the original ones. Although we could merge these results into a single criterion, we prefer to state them separately since this is the way we apply them. But first let us further simplify the notation by taking
  $$
  r_i:=(x_i^2+y_i^2)^{1/2},\ \ \rho_i:=(\sigma z_i^2+w_i^2)^{1/2},
  \ \  \ i=\overline{1,N}.
  $$
  Then we have  
  $$r_i^2+\sigma \rho_i^2=\sigma, \ \  {\rm and}\ \  \rho_i^2>0\ \  {\rm in \ the \ case\  of } \ \H^3. $$
   Recall that 
   \begin{equation*}
  \nabla_{\q_i} U =\sum_{j=1,j\ne i}^N\frac{m_im_j[\q_j - \csn d_{ij}\q_i ]}{\sn^3 d_{ij} }, \ \ \nabla_{\q_i} I(\q) =  2m_i 
          \begin{bmatrix}
           x_i\rho_i^2\\
           y_i\rho_i^2\\
          - \sigma z_ir_i^2\\
           - \sigma w_ir_i^2 
           \end{bmatrix}, \ \  i=\overline{1,N}.
   \end{equation*}

  \subsection{Criterion for central configurations in $\S^3$}
  
  We can now state and prove the following criterion for the existence of central configurations in $\S^3$.
  
  \begin{criterion} \label{criterion++}
  Consider the masses $m_1,\dots, m_N>0$ in $\S^3$ at the configuration 
 $\q=(\q_1,\dots,\q_N)$, $\q_i=(x_i,y_i,z_i,w_i)^T \in \S^1_{xy} \cup \S^1_{zw}$
   for $1\le i\le k$, $k\le N$, and 
 $\q_i=(x_i,y_i,z_i,w_i)^T\notin\S^1_{xy} \cup \S^1_{zw},\ k<i\le N$. 
 Then the first central configuration equation, $\nabla_{\q_i}U=\lambda\nabla_{\q_i}I, i=\overline{1,N},$ is equivalent to the following $3N$ equations: 
for each $i$ with $1\le i\le k$, choose three of the four equations 
  \[  \sum_{j=1,j\ne i}^N\frac{m_im_j[\q_j - \cos d_{ij}\q_i ]}{\sin^3 d_{ij} }=0,\]
and for $k<i\le N$ take 
  \begin{equation*}
  \begin{cases}
  &\sum_{j=1,j\neq i}^N\dfrac{m_j(x_ix_j + y_iy_j -r_i^2\cos d_{ij})}{\sin^3 d_{ij}}=2\lambda r_i^2\rho_i^2\\
  &\sum_{j=1,j\neq i}^N\dfrac{m_j(x_iy_j-x_jy_i)}{\sin^3 d_{ij}}=0\\
  &\sum_{j=1,j\neq i}^N\dfrac{m_j(z_iw_j-z_jw_i)}{\sin^3 d_{ij}}=0. \ \
  \end{cases}
  \end{equation*}
 \end{criterion}
  \begin{proof}
  The idea is to decompose the central configuration equation $\nabla_{\q_i}U=\lambda\nabla_{\q_i}I $ along some basis. For $i>k$,  $\q_i \notin \S^1_{xy} \cup \S^1_{zw}$, i.e.\ $r_i\rho_i\ne0$,  the following four vectors form an orthogonal basis of $\R^4$:
  \begin{equation*} 
  e_1= \frac{(x_i,y_i,0,0)^T}{r_i},\
  e_2= \frac{(-y_i, x_i, 0,0)^T}{r_i},\
  e_3= \frac{(0,0,z_i,w_i)^T}{\rho_i},\
  e_4= \frac{(0,0,-w_i,z_i)^T}{\rho_i}.
  \end{equation*}
  Decomposing $\nabla_{\q_i}U$ and  $\nabla_{\q_i}I$ along these vectors, we obtain 
  \begin{equation} 
  \begin{split}
 \nabla_{\q_i}U=& \sum_{j=1,j\ne i}^N\frac{m_im_j}{r_i\sin^3 d_{ij}} \left[ (x_ix_j +y_iy_j -r_i^2 \cos d_{ij} )e_1 + \begin{vmatrix}
   x_i & y_i\\
   x_j& y_j
   \end{vmatrix}e_2\right]\\
   &+\frac{m_im_j}{\rho_i\sin^3 d_{ij}} \left[ (z_iz_j +w_iw_j - \cos d_{ij} \rho_i^2)e_3 + 
   \begin{vmatrix}
    z_i & w_i\\
    z_j& w_j
    \end{vmatrix}e_4\right],\\
 \nabla_{\q_i}I&=2m_i (r_i\rho_i^2 e_1-\rho_ir_i^2e_3),    \ \ i=\overline{1,N}.
  \end{split}
  \notag
  \end{equation}
  Thus the central configuration equation $\nabla_{\q_i}U=\lambda\nabla_{\q_i}I $ is 
  \begin{equation*} 
  \begin{split}
  &\sum_{j=1,j\neq i}^N\frac{m_j(x_ix_j + y_iy_j -r_i^2\cos d_{ij} )}{\sin^3 d_{ij}}= 2\lambda r_i^2\rho_i^2,\\
  &\sum_{j=1,j\neq i}^N\frac{m_j}{\sin^3 d_{ij}} \begin{vmatrix}
   x_i & y_i\\
   x_j& y_j
   \end{vmatrix} =0,\\
   &\sum_{j=1,j\neq i}^N\frac{m_j(z_iz_j + w_iw_j - \rho_i^2\cos d_{ij})}{\sin^3 d_{ij}}=-2\lambda r_i^2\rho_i^2,\\
   &\sum_{j=1,j\neq i}^N\frac{m_j}{\sin^3 d_{ij}} 
  \begin{vmatrix}
    z_i & w_i\\
    z_j& w_j
  \end{vmatrix} =0, \ \ i=\overline{1,N}.
  \end{split}
  \end{equation*}
  By adding the first and third equation we obtain an
  identity, which means that these equations are
  dependent, so we can eliminate one of them (say, the third) to obtain $3(N-k)$ equations.

 For $i\le k$, $\q_i \in \S^1_{xy} \cup \S^1_{zw}$,  by Proposition \ref{vanish_of_I}, we have  \[\nabla _{\q_i}U=\sum_{j=1,j\ne i}^N\frac{m_im_j[\q_j - \cos d_{ij}\q_i ]}{\sin^3 d_{ij} }=0. \]
 Notice that $\nabla_{\q_i}U\cdot \q_i=0$, so the four equations  are dependent, thus we can select three independent ones as follows. Suppose that $w_i\ne 0$, then we take the first three equations since the first three components being zero imply that the fourth is also zero. This remark completes the proof.
  \end{proof}
    \subsection{Criterion for central configurations in $\H^3$}
    We can now state and prove the following criterion for the existence of central configurations in $\H^3$.
    
  \begin{criterion}\label{criterion--}
    Consider the masses $m_1,\dots, m_N>0$ in $\H^3$ at the configuration 
     $\q=(\q_1,\dots,\q_N)$, where $\q_i=(x_i,y_i,z_i,w_i)^T \in  \H^1_{zw}$
       for $1\le i\le k$, $k\le N$, and 
     $\q_i=(x_i,y_i,z_i,w_i)^T\notin \H^1_{zw}$
     for $k<i\le N$. 
     Then the first central configuration equation, $\nabla_{\q_i}U=\lambda\nabla_{\q_i}I, $ is equivalent to the following $3N$ equations: 
 for each $i$ with $1\le i\le k$, take
  \begin{equation*}
  \begin{cases}
  & \sum_{j=1,j\ne i}^N\dfrac{m_im_j[x_j - \cosh d_{ij}x_i ]}{\sinh^3 d_{ij} }=0\\
  & \sum_{j=1,j\ne i}^N\dfrac{m_im_j[y_j - \cosh d_{ij}y_i ]}{\sinh^3 d_{ij} }=0\\
  & \sum_{j=1,j\ne i}^N\dfrac{m_im_j[z_j - \cosh d_{ij}z_i ]}{\sinh^3 d_{ij} }=0,
\end{cases}
  \end{equation*}
   and for $k<i\le N$, choose
    \begin{equation*}
       \begin{cases}
       &\sum_{j=1,j\neq i}^N\dfrac{m_j(x_ix_j + y_iy_j -r_i^2\cosh d_{ij})}{\sinh^3 d_{ij}}=2\lambda r_i^2\rho_i^2\\
       &\sum_{j=1,j\neq i}^N\dfrac{m_j(x_iy_j-x_jy_i)}{\sinh^3 d_{ij}}=0\\
       &\sum_{j=1,j\neq i}^N\dfrac{m_j(z_iw_j-z_jw_i)}{\sinh^3 d_{ij}}=0.
       \end{cases}
       \end{equation*}
    \end{criterion}
    \begin{proof}
    We use the same idea as in the proof of the previous criterion.  If $\q_i \notin \H^1_{zw}$, i.e.\ $r_i\ne0$,  then  the following four vectors form an orthogonal basis of $\R^{3,1}$:
    \begin{equation*} 
    e_1= \frac{(x_i,y_i,0,0)^T}{r_i},\
    e_2= \frac{(-y_i, x_i, 0,0)^T}{r_i},\
    e_3= \frac{(0,0,z_i,w_i)^T}{\rho_i},\
    e_4= \frac{(0,0,w_i,z_i)^T}{\rho_i}.
    \end{equation*}
    Decomposing $\nabla_{\q_i}U$ and  $\nabla_{\q_i}I$ along these vectors, we obtain 
    \begin{equation} 
    \begin{split}
   \nabla_{\q_i}U=& \sum_{j=1,j\ne i}^N\frac{m_im_j}{r_i\sinh^3 d_{ij}} \left[ (x_ix_j +y_iy_j -r_i^2 \cosh d_{ij} )e_1 + \begin{vmatrix}
     x_i & y_i\\
     x_j& y_j
     \end{vmatrix}e_2\right]\\
     &-\frac{m_im_j}{\rho_i\sinh^3 d_{ij}} \left[ (z_iz_j -w_iw_j + \rho_i^2\cosh d_{ij} )e_3 - 
     \begin{vmatrix}
      z_i & w_i\\
      z_j& w_j
      \end{vmatrix}e_4\right],\\
   \nabla_{\q_i}I&=2m_i (r_i\rho_i^2 e_1+\rho_ir_i^2e_3),    \ \ i=\overline{1,N}.
    \end{split}
    \notag
    \end{equation}
    Thus the central configuration equation $\nabla_{\q_i}U=\lambda\nabla_{\q_i}I $ is 
    \begin{equation*} 
    \begin{split}
    &\sum_{j=1,j\neq i}^N\frac{m_j(x_ix_j + y_iy_j -r_i^2\cosh d_{ij} )}{\sinh^3 d_{ij}}= 2\lambda r_i^2\rho_i^2,\\
    &\sum_{j=1,j\neq i}^N\frac{m_j}{\sinh^3 d_{ij}} \begin{vmatrix}
     x_i & y_i\\
     x_j& y_j
     \end{vmatrix} =0,\\
     &\sum_{j=1,j\neq i}^N\frac{m_j(z_iz_j - w_iw_j + \rho_i^2\cosh d_{ij})}{\sinh^3 d_{ij}}=-2\lambda r_i^2\rho_i^2,\\
     &\sum_{j=1,j\neq i}^N\frac{m_j}{\sinh^3 d_{ij}} 
    \begin{vmatrix}
      z_i & w_i\\
      z_j& w_j
    \end{vmatrix} =0, \ \ i=\overline{1,N}.
    \end{split}
    \end{equation*}
    But adding the first and third equation we obtain an
    identity, which means that these equations are
    dependent, so we can eliminate one of them (say, the third) 
    to obtain the $3(N-k)$ equations. 
    
     If  $\q_i \in \H^1_{zw}$,  then by Proposition \ref{vanish_of_I}, \[\nabla _{\q_i}U=\sum_{j=1,j\ne i}^N\frac{m_im_j[\q_j - \cosh d_{ij}\q_i ]}{\sinh^3 d_{ij} }=0. \]
     Notice that $\nabla_{\q_i}U \cdot \q_i=0$, so the four equations  are dependent and we can select the first  three independent equations. Since $w_i\ge1 $, then the first three components being zero imply that the fourth is zero too.  This remark completes the proof.
    \end{proof}
   \subsection{The value of $\lambda$} 
    A configuration $\q=(\q_1,\dots,\q_N)$, $\q_i=(x_i,y_i,z_i,w_i)^T,\  i=\overline{1,N}$, is a central configuration if it satisfies the central configuration equation \eqref{CCE} or, equivalently, equation \eqref{CCE2} for some constant $\lambda$. It turns out that the value of $\lambda$ is uniquely determined by the central configuration equation. 
     Recall that $\lambda=-U/2I$  for central configurations of the Newtonian $N$-body problem. Here the form of $\lambda$ is not as nice as that.
     
    \begin{corollary}
    For any given ordinary  central configuration $\q=(\q_1,\dots,\q_N)$, $\q_i=(x_i,y_i,z_i,w_i)^T,\  i=\overline{1,N}$, in $\S^3$,  
  \begin{equation}\label{lambda+}
     \begin{split}
     \lambda=\sum_{1\le i\le N} \frac{ \sum_{j=1,j\ne i}^N\dfrac{m_im_j[2x_ix_j+2y_iy_j-(r_i^2+r_j^2)\cos d_{ij}]}{\sin^3d_{ij}}}{2 m_i r_i^2 \rho_i^2}.
     \end{split}
     \end{equation} 
   \end{corollary}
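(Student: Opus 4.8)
The plan is to start from the criterion for central configurations in $\S^3$ (Criterion \ref{criterion++}) and simply add up the scalar equations that isolate $\lambda$. For each index $i$ with $k<i\le N$ (i.e.\ $\q_i\notin\S^1_{xy}\cup\S^1_{zw}$, so $r_i\rho_i\ne 0$), the first of the three equations in the criterion reads
\[
\sum_{j=1,j\neq i}^N\frac{m_j(x_ix_j + y_iy_j -r_i^2\cos d_{ij})}{\sin^3 d_{ij}}=2\lambda r_i^2\rho_i^2,
\]
so that, dividing by $2m_ir_i^2\rho_i^2$ and multiplying the summand by the extra factor $m_i$, we can write
\[
\lambda=\frac{1}{2m_ir_i^2\rho_i^2}\sum_{j=1,j\neq i}^N\frac{m_im_j(x_ix_j + y_iy_j -r_i^2\cos d_{ij})}{\sin^3 d_{ij}}
\]
for \emph{each} such $i$. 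First I would observe that for an ordinary central configuration there is at least one such $i$ (otherwise, by Corollary \ref{SCC_on_S1&S1}, the configuration would be special), so this already pins down $\lambda$; the content of the corollary is to re-express it as the stated symmetric average over \emph{all} $i$.

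The key step is a symmetrization. Summing the displayed identity over $i$ from $1$ to $N$ — where for any $i\le k$ the corresponding term must be interpreted as $0$, consistent with $\nabla_{\q_i}I=0$ and $\nabla_{\q_i}U=0$ there — and using that each of the $N-k$ valid equations gives the same value $\lambda$, one gets $\lambda$ as an average. To reach the precise form in \eqref{lambda+} I would pair the $(i,j)$ term with the $(j,i)$ term: the combination $x_ix_j+y_iy_j$ is symmetric in $i\leftrightarrow j$, while the term $r_i^2\cos d_{ij}$ from the $i$-th equation pairs with $r_j^2\cos d_{ij}$ from the $j$-th equation, producing the factor $(r_i^2+r_j^2)\cos d_{ij}$ and hence the numerator $2x_ix_j+2y_iy_j-(r_i^2+r_j^2)\cos d_{ij}$. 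One must be slightly careful that the denominators $2m_ir_i^2\rho_i^2$ are \emph{not} symmetric, so the symmetrization is performed only in the numerator of each individual term before summing; the outer sum over $i$ keeps its own weight $1/(2m_ir_i^2\rho_i^2)$, which is exactly what appears in \eqref{lambda+}.

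The only genuine obstacle is bookkeeping: making sure the terms with $\q_i\in\S^1_{xy}\cup\S^1_{zw}$ contribute nothing (so the outer sum in \eqref{lambda+} is really over those $i$ with $r_i\rho_i\ne0$, with the convention that the other summands vanish) and that the $i\leftrightarrow j$ pairing is carried out consistently. Everything else is the substitution $r_i^2=x_i^2+y_i^2$, $\rho_i^2=z_i^2+w_i^2$ and the definition $d_{ij}=\arccos(\q_i\cdot\q_j)$, together with the fact, already established, that any ordinary central configuration has $r_i\rho_i\ne0$ for at least one $i$, which guarantees the formula is not vacuous. I would close by noting that the uniqueness of $\lambda$ is immediate: it is determined by any single one of the scalar equations above.
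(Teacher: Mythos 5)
Your starting point is the same as the paper's: the first scalar equation of Criterion \ref{criterion++},
\[
\sum_{j=1,j\neq i}^N\frac{m_im_j(x_ix_j + y_iy_j -r_i^2\cos d_{ij})}{\sin^3 d_{ij}}=2\lambda m_ir_i^2\rho_i^2,
\]
valid for every $i$ (trivially, with both sides zero, when $\q_i\in\S^1_{xy}\cup\S^1_{zw}$), together with the appeal to Corollary \ref{SCC_on_S1&S1} to guarantee at least one $i$ with $r_i\rho_i\ne0$. The gap lies in how you pass from this to \eqref{lambda+}. The $(i,j)\leftrightarrow(j,i)$ pairing that produces the symmetric numerator $2x_ix_j+2y_iy_j-(r_i^2+r_j^2)\cos d_{ij}$ is legitimate only when the two paired terms enter with the \emph{same} coefficient; carrying out the symmetrization ``only in the numerator of each individual term'' while keeping the $i$-dependent weight $1/(2m_ir_i^2\rho_i^2)$ replaces the $i$-th numerator by a genuinely different quantity (the piece $-r_j^2\cos d_{ij}$ belongs to the $j$-th equation, not the $i$-th), so the resulting summands are no longer each equal to $\lambda$ unless all the $r_i$ coincide. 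There is also a normalization problem: if each of the $N-k$ valid equations yields the value $\lambda$ after division, their sum is $(N-k)\lambda$, and your final expression carries no compensating factor. A sanity check on the equal-mass Lagrangian configuration on $\S^2_{xyz}$ ($N=3$, all $r_i$ equal) shows that the ``sum over $i$ of ratios with symmetrized numerators'' evaluates to $6\lambda$, not $\lambda$.

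The correct route, which is what the paper does, is to add the $N$ \emph{unweighted} equations first, obtaining
\[
\sum_{1\le i\le N}\sum_{j=1,j\ne i}^N\frac{m_im_j(x_ix_j+y_iy_j-r_i^2\cos d_{ij})}{\sin^3 d_{ij}}
=2\lambda\sum_{1\le i\le N}m_ir_i^2\rho_i^2 .
\]
Every term on the left now has weight one, so the $(i,j)$ and $(j,i)$ contributions may be merged into the symmetric numerator, and a single division by $2\sum_i m_ir_i^2\rho_i^2$ (nonzero precisely because the configuration is ordinary) gives $\lambda$ as one fraction with the sum over $i$ in \emph{both} the numerator and the denominator. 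That quotient of sums is the intended content of \eqref{lambda+} --- the placement of the summation sign in the displayed formula is misleading, but the paper's own use of it in Section \ref{example} confirms this reading --- and it is a weighted combination of your individual ratios rather than their plain sum. If you prefer the per-$i$ formulation, the clean correct statement is simply $\lambda=\bigl(\sum_{j\ne i}m_im_j(x_ix_j+y_iy_j-r_i^2\cos d_{ij})/\sin^3 d_{ij}\bigr)\big/\bigl(2m_ir_i^2\rho_i^2\bigr)$ for any single $i$ with $r_i\rho_i\ne0$; the summed version requires the division to be performed after, not before, the summation over $i$.
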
 
 \begin{proof}
 From the system of the $3N$ equations derived in Criterion \ref{criterion++}, for $\q_i \notin\S^1_{xy}\cup \S^1_{zw}$ we have  
 \[ \sum_{j=1,j\neq i}^N\frac{m_im_j(x_ix_j + y_iy_j -r_i^2\cos d_{ij} )}{\sin^3 d_{ij}}= 2\lambda m_ir_i^2\rho_i^2, \ {\rm for } \ \q_i \notin\S^1_{xy}\cup \S^1_{zw}, i=\overline{1,N},\]
 and it is easy to see that these equations also hold for $\q_i \in \S^1_{xy}\cup \S^1_{zw}$. Adding these $N$ equations, we obtain that
 \[ \sum_{1\le i\le N} \sum_{j=1,j\ne i}^N \frac{m_im_j\left(2x_ix_j+2y_iy_j-(r_i^2+r_j^2)\cos d_{ij}\right)}{\sin^3d_{ij}}= \sum_{1\le i\le N}  2\lambda m_ir_i^2\rho_i^2.\]
 
 Notice that we have assumed that the central configuration is not a special central configuration, which implies that there is some  $\q_i \notin\S^1_{xy}\cup \S^1_{zw}$ by Corollary \ref{SCC_on_S1&S1}.  Hence we see that $\sum_{1\le i\le N}  2 m_ir_i^2\rho_i^2\ne 0$. Then the above equation leads to equation \eqref{lambda+}, a remark that completes the proof.
 \end{proof}   
    
\begin{corollary}
For central configurations $\q=(\q_1,\dots,\q_N)$, $\q_i=(x_i,y_i,z_i,w_i)^T,\  i=\overline{1,N}$, in $\H^3$, we have
   \begin{equation}\label{lambda-}
   \begin{split}
   \lambda= \sum_{1\le i\le N}\frac{ \sum_{j=1,j\ne i}^N\dfrac{m_im_j[2x_ix_j+2y_iy_j-(r_i^2+r_j^2)\cosh d_{ij}]}{\sinh^3d_{ij}}}{2  m_i r_i^2 \rho_i^2}<0.
   \end{split}
   \end{equation} 
    \end{corollary}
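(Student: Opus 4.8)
The plan is to follow the derivation of the preceding ($\S^3$) corollary to obtain the closed form of $\lambda$, and then to add a sign analysis, which is the one genuinely new ingredient. I would start from the equations supplied by Criterion \ref{criterion--}: for every index $i$ with $\q_i\notin\H^1_{zw}$, the first of the three equations listed there, multiplied by $m_i$, reads
\[
\sum_{j=1,j\ne i}^N\frac{m_im_j\,(x_ix_j+y_iy_j-r_i^2\cosh d_{ij})}{\sinh^3 d_{ij}}=2\lambda\, m_i r_i^2\rho_i^2 ,
\]
while for $\q_i\in\H^1_{zw}$ we have $x_i=y_i=r_i=0$, so both sides of this identity vanish; hence it holds for all $i=\overline{1,N}$. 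Summing over $i$ and symmetrizing the double sum (the weight $m_im_j/\sinh^3 d_{ij}$ and the quantity $x_ix_j+y_iy_j$ are symmetric in $i,j$, while $r_i^2\cosh d_{ij}$ symmetrizes to $\tfrac12(r_i^2+r_j^2)\cosh d_{ij}$) produces the numerator and denominator of \eqref{lambda-}. Since $\H^3$ admits no special central configurations, Proposition \ref{vanish_of_I} shows that not every $\q_i$ can lie on $\H^1_{zw}$ --- otherwise $\nabla_{\q_i}I=0$ for all $i$ and \eqref{CCE} would force $\nabla_{\q_i}U=0$, i.e.\ a special central configuration. Thus some $r_{i_0}>0$, the sum $\sum_i m_i r_i^2\rho_i^2$ is nonzero, and solving for $\lambda$ gives formula \eqref{lambda-}.

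It remains to prove $\lambda<0$. The denominator $2\sum_i m_i r_i^2\rho_i^2$ is strictly positive, since in $\H^3$ one has $\rho_i^2=w_i^2-z_i^2=1+r_i^2>0$ for every $i$ and at least one $r_i$ is nonzero. For the numerator I would bound each pair term from above: Cauchy--Schwarz gives $2x_ix_j+2y_iy_j\le 2r_ir_j$, the arithmetic--geometric mean inequality gives $2r_ir_j\le r_i^2+r_j^2$, and $\cosh d_{ij}\ge1$, so that
\[
2x_ix_j+2y_iy_j-(r_i^2+r_j^2)\cosh d_{ij}\ \le\ (r_i^2+r_j^2)\bigl(1-\cosh d_{ij}\bigr)\ \le\ 0 .
\]
To get strict negativity of the whole numerator, fix $i_0$ with $r_{i_0}>0$ and take any $j\ne i_0$: if $r_j=0$ the bracket for the pair $(i_0,j)$ equals $-r_{i_0}^2\cosh d_{i_0 j}<0$, while if $r_j>0$ the displayed bound is strict because a nonsingular configuration has $d_{i_0 j}>0$, hence $\cosh d_{i_0 j}>1$. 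As $N\ge2$ there is at least one such pair, every other pair contributes a nonpositive term, and all weights $m_im_j/\sinh^3 d_{ij}$ are positive; hence the numerator is strictly negative and $\lambda<0$.

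The algebra turning Criterion \ref{criterion--} into \eqref{lambda-} is routine bookkeeping, identical in spirit to the $\S^3$ case. The point requiring care is the sign discussion: one must track the equality cases through the chain Cauchy--Schwarz $\Rightarrow$ arithmetic--geometric mean $\Rightarrow$ $\cosh\ge1$ in order to obtain \emph{strict} negativity, and one must invoke the nonexistence of special central configurations in $\H^3$ at exactly the two places it is needed --- to license dividing by $\sum_i m_i r_i^2\rho_i^2$ and to guarantee an index $i_0$ with $r_{i_0}>0$.
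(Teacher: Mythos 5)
Your proof is correct and follows essentially the same route as the paper: sum the first equation of Criterion \ref{criterion--} over $i$, symmetrize the double sum, and bound each pair term using $\cosh d_{ij}>1$. The one difference worth noting is that you are more careful about the equality cases than the paper's own argument, whose chain of strict inequalities silently assumes $r_i^2+r_j^2>0$ and which deduces $\sum_i m_ir_i^2\rho_i^2\ne 0$ from the (not fully justified in the degenerate case) strict negativity of the numerator, whereas you supply the needed index $i_0$ with $r_{i_0}>0$ directly by invoking the nonexistence of special central configurations in $\H^3$ and then track strictness through the pair $(i_0,j)$.
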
 
\begin{proof}
From the system of the $3N$ equations derived in Criterion \ref{criterion--}, for $\q_i \notin \H^1_{zw}$ we have  
\[ \sum_{j=1,j\neq i}^N\frac{m_im_j(x_ix_j + y_iy_j -r_i^2\cosh d_{ij} )}{\sinh^3 d_{ij}}= 2\lambda m_ir_i^2\rho_i^2, \ {\rm for } \ \q_i \notin\H^1_{zw}, i=\overline{1,N},\]
and it is easy to see that these relationships also hold for $\q_i \in \H^1_{zw}$. Adding these $N$ equations, we obtain that
\[ \sum_{1\le i\le N} \sum_{j=1,j\ne i}^N\frac{m_im_j\left(2x_ix_j+2y_iy_j-(r_i^2+r_j^2)\cosh d_{ij}\right)}{\sinh^3d_{ij}}= \sum_{1\le i\le N} 2\lambda m_ir_i^2\rho_i^2.\]
Notice that $\cosh d_{ij}>1$ since $d_{ij}>0$, so 
\begin{equation*} 
 \begin{split}
2x_ix_j+2y_iy_j-(r_i^2+r_j^2)\cosh d_{ij}
<&2x_ix_j+2y_iy_j-(x_i^2+x_j^2+y_i^2+y_j^2)\\
<&-(x_i-x_j)^2-(y_i-y_j)^2<0.
 \end{split}
 \end{equation*}
This implies that  the left hand side of the above identity is not zero, consequently we have 
$\sum_{1\le i\le N} m_i r_i^2 \rho_i^2\ne 0$, a relationship that leads to equation \eqref {lambda-}.
\end{proof}
   
The above result also implies that there is no special central configuration on $\H^3$ that corresponds to the case $\lambda=0$. We can thus conclude that there is no such central configurations with $\q_i\in \H^1_{zw}$ for all $i=\overline{1,N}$.

 \section{Existence of ordinary central configurations }\label{criticalpoint}
In this section we will interpret  central configurations as  critical points of functions related to $U$,  which turns out to be a very useful approach. Then we will prove the existence of ordinary central configurations for any given masses. Finally we will discuss the Wintner-Smale conjecture for the curved $N$-body problem.

\subsection{Central configurations as critical points}
From the first central configuration equation, 
\[ \nabla_{\q_i}U(\q)-\lambda\nabla_{\q_i}I(\q)=0, \]
we can derive the following property.
\begin{proposition}
Central configurations in $\M^3$ are  critical points of the function 
$$U-\lambda I: (\M^3)^N\setminus\Delta\to \R,$$ 
where $\lambda$ is a constant. In the case of the sphere $\S^3$, $\lambda=0$  corresponds to special central configurations.   
\end{proposition}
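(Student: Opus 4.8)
The proposition is essentially a restatement of the defining equations, so the proof will be very short; the plan is just to make the identification precise.

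\smallskip

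First I would recall that $U$ and $I$ are smooth real-valued functions on the open manifold $(\M^3)^N\setminus\Delta$, so $U-\lambda I$ is smooth there for any constant $\lambda$, and its critical points are precisely the configurations at which the differential $d(U-\lambda I)$ vanishes. Since $(\M^3)^N$ is a product of the factors $\M^3$, the differential (equivalently, the gradient with respect to the Riemannian metric generated by the kinetic energy) splits blockwise over the $N$ particles, so that a configuration $\q=(\q_1,\dots,\q_N)$ is a critical point of $U-\lambda I$ if and only if $\nabla_{\q_i}(U-\lambda I)(\q)=0$ for every $i=\overline{1,N}$. Because the gradient is $\R$-linear and $\lambda$ is a constant, $\nabla_{\q_i}(U-\lambda I)=\nabla_{\q_i}U-\lambda\nabla_{\q_i}I$.

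\smallskip

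Next I would compare this with the first central configuration equation \eqref{CCE}, namely $\nabla_{\q_i}U(\q)=\lambda\nabla_{\q_i}I(\q)$ for $i=\overline{1,N}$. The two systems of conditions are identical, so $\q$ satisfies \eqref{CCE} for a given constant $\lambda$ if and only if $\q$ is a critical point of $U-\lambda I$; this proves the first assertion. For the last sentence I would specialize to $\S^3$ and take $\lambda=0$: then $U-\lambda I=U$, and a critical point of $U$ on $(\S^3)^N\setminus\Delta$ is exactly a configuration with $\nabla_{\q_i}U(\q)=0$ for all $i$, i.e.\ one satisfying \eqref{SCCE}, which is the definition of a special central configuration. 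Conversely every special central configuration satisfies \eqref{CCE} with $\lambda=0$, as already observed right after its definition, hence is a critical point of $U$. Thus, on $\S^3$, the value $\lambda=0$ corresponds precisely to the special central configurations.

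\smallskip

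The only point that warrants a (minor) word of care is the blockwise decomposition of the gradient on the product manifold together with the observation that a critical point of a function on such a product is one at which all partial gradients vanish; beyond this bookkeeping there is no real obstacle, since the statement merely re-expresses equations \eqref{CCE} and \eqref{SCCE}.
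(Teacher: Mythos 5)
Your argument is correct and coincides with what the paper does: the proposition is stated as an immediate consequence of the first central configuration equation \eqref{CCE} (and \eqref{SCCE} for the special case), with no further proof given, and your blockwise identification of critical points of $U-\lambda I$ with solutions of \eqref{CCE} is exactly that observation made explicit.
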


We can also see that an ordinary  central configuration is a critical point of the restriction of $U$ subject to the constraint $I=constant$. From this point of view, $-\lambda$ is a Lagrange multiplier. More precisely, let us denote 
\[ S_c:=\{  \q\in(\M^3)^N\setminus\Delta\ \! | \ \! I(\q)=c\}. \]
We then have the following result.
\begin{proposition}
 Ordinary central configurations in $\M^3$ are critical points of
  $U|_{S_c}$, i.e.\ critical points of  
  $$  U : \ S_c \to\R.$$  
\end{proposition}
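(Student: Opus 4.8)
The statement to prove is that ordinary central configurations in $\M^3$ are exactly the critical points of the restriction $U|_{S_c}$, where $S_c = \{\q \in (\M^3)^N \setminus \Delta \mid I(\q) = c\}$.

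The plan is to apply the method of Lagrange multipliers on the manifold $(\M^3)^N \setminus \Delta$, using the level set $S_c$ cut out by the smooth function $I$. First I would observe that by Proposition \ref{vanish_of_I}, the gradient $\nabla_{\q} I$ vanishes on $(\M^3)^N$ only at configurations all of whose bodies lie on $\S^1_{xy} \cup \S^1_{zw}$ (in $\S^3$) or on $\H^1_{zw}$ (in $\H^3$); by Corollary \ref{SCC_on_S1&S1} such configurations, if central, are special. Hence at any ordinary central configuration $\q$, the gradient $\nabla_\q I \ne 0$, so $c := I(\q)$ is a regular value of $I$ near $\q$ and $S_c$ is a smooth hypersurface there, with tangent space $T_\q S_c = \{\v \mid \ll \nabla_\q I, \v \gg = 0\}$ (using the metric on $(\M^3)^N$).

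Next I would recall the standard Lagrange multiplier characterization: a point $\q \in S_c$ is a critical point of $U|_{S_c}$ if and only if the gradient of $U$ on $(\M^3)^N$ is orthogonal to $T_\q S_c$, i.e.\ $\nabla_\q U$ is parallel to $\nabla_\q I$, which means there exists a scalar $-\lambda$ (on each factor; one must check it is the same $\lambda$ for all $i$) with $\nabla_{\q_i} U = \lambda \nabla_{\q_i} I$ for all $i = \overline{1,N}$. This is precisely the first central configuration equation \eqref{CCE}. For the forward direction, if $\q$ is an ordinary central configuration, then \eqref{CCE} holds with some $\lambda \in \R$, and since $\q$ is not special, $\nabla_\q I \ne 0$; thus $\nabla_\q U \perp T_\q S_c$, so $\q$ is a critical point of $U|_{S_c}$. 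For the converse, if $\q \in S_c$ is a critical point of $U|_{S_c}$, the multiplier rule gives $\nabla_{\q_i} U = \lambda \nabla_{\q_i} I$ for a common constant $\lambda$; this says $\q$ is a central configuration, and it is ordinary provided $\lambda \ne 0$ or not all bodies sit on the exceptional circles — but the latter is automatic because $\nabla_\q I \ne 0$ is needed for $S_c$ to be a manifold at $\q$ and for the multiplier rule to apply, and by Proposition \ref{vanish_of_I} this forces some $\q_i$ off $\S^1_{xy}\cup\S^1_{zw}$ (resp. off $\H^1_{zw}$), so $\q$ is not special.

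The main subtlety — rather than a genuine obstacle — is the uniformity of the multiplier $\lambda$ across the $N$ blocks $\q_i$: the constraint $I = c$ is a single scalar condition on the product manifold $(\M^3)^N$, so the standard multiplier theorem gives one scalar $-\lambda$ multiplying the full gradient $\nabla_\q I = (\nabla_{\q_1}I, \dots, \nabla_{\q_N}I)$, which automatically yields the same $\lambda$ in each equation $\nabla_{\q_i}U = \lambda \nabla_{\q_i}I$; this is exactly the shape of \eqref{CCE}. One should also note that each body contributes $\M^3$ (not the singular locus), so $(\M^3)^N \setminus \Delta$ is a smooth manifold and $U$, $I$ are smooth there, legitimizing all the differential calculus; and that $S_c$ being possibly disconnected or having other components where $\nabla I = 0$ is irrelevant since we only test criticality locally at $\q$. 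I would keep the write-up short, citing Proposition \ref{vanish_of_I}, Corollary \ref{SCC_on_S1&S1}, and the preceding proposition about $U - \lambda I$, and invoking the Lagrange multiplier theorem on a manifold as a black box.
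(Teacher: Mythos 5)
Your proposal is correct and is exactly the argument the paper has in mind: the paper states this proposition without a separate proof, presenting it as the Lagrange-multiplier reading of the first central configuration equation $\nabla_{\q_i}U=\lambda\nabla_{\q_i}I$ with $-\lambda$ as the multiplier for the single scalar constraint $I=c$. Your added care about the non-vanishing of $\nabla_\q I$ at an ordinary central configuration (via Proposition \ref{vanish_of_I} and Corollary \ref{SCC_on_S1&S1}), which guarantees that $S_c$ is a smooth hypersurface near $\q$, is a detail the paper leaves implicit but is handled correctly here.
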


Let $\q$ be an ordinary central configuration and $\phi$ an element of $SO(2)\times SO(2)$ or  $SO(2)\times SO(1,1)$.  Then  $\phi \q$ is also a central configuration. Thus it follows that the critical points of  $U|_{S_c}$ are not isolated, but rather occur as manifolds of critical points. Similarly, these special central configurations are not isolated either.  This fact suggests that we can further look at the central configurations as critical points of $U$ subject to a quotient manifold. Note that both $U$ and $(\M^3)^N$ are invariant under the isometry group,  and the set $S_c$ is invariant under the subgroup $SO(2)\times SO(2)$ or $SO(2)\times SO(1,1)$. We thus have the following property.

 \begin{proposition}\label{six}
 There is a one-to-one correspondence between the classes of central configurations and the critical points of the force function $\hat{U}$ induced by $U$ on the quotient set 
 \begin{enumerate}
 \item $((\S^3)^N\setminus \D) /SO(4)$
for special central configurations in $\S^3$,
 \item $S_c/(SO(2)\times SO(2))$
 for ordinary central configurations in $\S^3$, and
 \item $S_c/(SO(2)\times SO(1,1))$ for central configurations in $\H^3$.
 \end{enumerate}
 \end{proposition}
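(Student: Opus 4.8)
The plan is to obtain all three statements from a single general fact about invariant functions, fed by the two preceding propositions (which identify central configurations with critical points of $U-\lambda I$, and ordinary central configurations with critical points of $U|_{S_c}$) and by a short check of the relevant invariances. First I would record the invariances: since $U$ depends only on the mutual distances $d_{ij}$, it is invariant under the whole isometry group, so $SO(4)$ preserves $U$ and, acting by isometries, preserves $\D$, hence acts on $(\S^3)^N\setminus\D$; the subgroup $SO(2)\times SO(2)\subset SO(4)$ moreover preserves $I(\q)=\sum_i m_i(x_i^2+y_i^2)$, its first factor rotating each pair $(x_i,y_i)$ and its second acting only on $(z_i,w_i)$, so it preserves every level set $S_c=\{I=c\}$; the analogous statements hold in $\H^3$ for $SO(2)\times SO(1,1)\subset SO(3,1)$, which also leaves the sheet $w>0$ invariant. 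Hence $U$ induces a well-defined function $\hat U$ on each of the three quotient sets, and near any ordinary central configuration $S_c$ is a smooth codimension-one submanifold with $c>0$: by Corollary \ref{SCC_on_S1&S1} an ordinary central configuration cannot have all $\q_i$ on $\S^1_{xy}\cup\S^1_{zw}$ (resp.\ on $\H^1_{zw}$), so $\nabla_\q I\neq 0$ there by Proposition \ref{vanish_of_I}.

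The general fact I would invoke is the following: if a group $G$ acts on a manifold $M$ preserving a smooth function $f$, then $df_\q$ automatically annihilates the tangent space to the orbit $G\q$, so $df_\q=0$ exactly when $df_\q$ annihilates some complement of $T_\q(G\q)$, i.e.\ exactly when the class $[\q]$ is a critical point of the induced function $\hat f$ on $M/G$ --- where, if the quotient is not a manifold, this last condition is taken as the definition, consistent with the smooth notion on the principal stratum. Thus the critical set of $f$ is $G$-invariant and descends bijectively onto the critical set of $\hat f$. I would then apply this three times: with $f=U$, $M=(\S^3)^N\setminus\D$, $G=SO(4)$, using that special central configurations are precisely the critical points of $U$ and that, by Definition \ref{equi-central configuration}, their equivalence classes are precisely the $SO(4)$-orbits, to obtain (1); and with $f=U|_{S_c}$, $M=S_c$, $G=SO(2)\times SO(2)$ (resp.\ $SO(2)\times SO(1,1)$), using the preceding proposition on critical points of $U|_{S_c}$, to obtain (2) (resp.\ (3), where one also invokes that $\H^3$ has no special central configurations).

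Two steps will need care. The first is the bookkeeping between the cases: a critical point of $U|_{S_c}$ satisfies $\nabla_{\q_i}U=\lambda\nabla_{\q_i}I$ by the Lagrange-multiplier description, and when $\lambda=0$ it is a special central configuration (possible in $\S^3$, e.g.\ the vertices of a regular simplex on a great $2$-sphere, which sits at a smooth point of $S_c$), so in (2) one should read ``critical points of $\hat U$'' as those of the level set $S_c$ --- equivalently, classes of central configurations with $I=c$ --- with the understanding that any such special classes are already fully enumerated by (1); in $\H^3$ this overlap is vacuous. The second, which I expect to be the genuine obstacle, is that the $G$-action on $S_c$ need not be free, so $S_c/G$ may be singular; the plan is to show this is irrelevant on the locus of ordinary central configurations, because a nontrivial element of $SO(2)\times SO(2)$ fixes $\q$ only if all $\q_i$ lie on $\S^1_{xy}$ or all on $\S^1_{zw}$, and a nontrivial element of $SO(2)\times SO(1,1)$ fixes $\q$ only if all $\q_i$ lie on $\H^1_{zw}$ (its $SO(1,1)$ factor acts freely on $\H^3$ since $(z_i,w_i)\neq(0,0)$ there); by Corollary \ref{SCC_on_S1&S1} such configurations are special. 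Hence $G$ acts freely near every ordinary central configuration, the relevant open piece of $S_c/G$ is a genuine manifold on which $\hat U$ is smooth, the bijection of critical sets is the standard statement for a free proper action, and the remaining singular strata contribute only special configurations, handled by (1).
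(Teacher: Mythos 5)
Your argument is correct and follows the same route the paper intends: the paper offers no formal proof of this proposition beyond the preceding observations that central configurations are critical points of $U-\lambda I$ (or of $U|_{S_c}$), that $U$, $\D$, and $S_c$ are invariant under the respective groups, and that equivalence classes of central configurations are by definition exactly the orbits of those groups. Your write-up simply supplies the details the paper leaves implicit --- the descent of critical points to the quotient, the freeness of the $SO(2)\times SO(2)$ (resp.\ $SO(2)\times SO(1,1)$) action near ordinary central configurations, and the bookkeeping for the $\lambda=0$ overlap in case (2) --- all of which check out.
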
          
 
  Let $\q$ in the quotient set be a critical point of $\hat{U}$. In the case of special central configuration on $\S^3$, the Hessian of $\hat{U}$ at $\q$, $D^2\hat{U}(\q)$, is an invariantly defined symmetric bilinear form on $T_{\q} ((\S^3)^N\setminus \D) /SO(4)  )$. For ordinary central configurations in $\S^3$ and $\H^3$, $D^2\hat{U}(\q)$ is an invariantly defined symmetric bilinear form on $T_{\q} \hat{S}_c$, where $\hat{S_c}$ is the quotient set in either (2) or (3) of Proposition \ref{six}.   The index of $D^2\hat{U}(\q)$ is the maximal dimension of a subspace of the tangent space 
   on which this form is negative definite. A critical point $\q$  of $\hat{U}$ is degenerate whenever the Hessian has a non-trivial nullspace.

 We can now formally introduce the following two concepts.

\begin{definition}
A central configuration is degenerate (nondegenerate) provided that the corresponding critical point $\q$  of $\hat{U}$ is degenerate (nondegenerate). 
\end{definition}

       \subsection{The structure of $I^{-1}(c)$}
Unlike in the Newtonian $N$-body problem, where $I=c>0$ is always a $(3N-1)$-dimensional ellipsoid, the set $I^{-1}(c)$ may not be a smooth manifold. 
To understand the structure of this set, we need the classical Regular Value Theorem, which we further recall for completeness. Let $\mathcal M, \mathcal N$ be differentiable manifolds and $f\colon \mathcal M\to \mathcal N$ a differentiable function. Then $f$ is called a submersion at $x\in \mathcal M$ if its differential, $Df_x\colon T_x\mathcal M\to T_{f(x)}\mathcal N$, is surjective. In this case, $x$ is called a regular point and $f(x)$ a regular value. Otherwise, $x$ is called a critical point and $f(x)$ a critical value. We can now state the following well known result, \cite{Hirsch}.
 
 \medskip
 \noindent $\bf{Regular \ Value\  Theorem.}$
{\it Let $f\colon \mathcal M\to \mathcal N$ be a $C^r$\!-map, $r\ge 1$. If $y\in f(\mathcal M)$ is a regular value, then $f^{-1}(y)$ is a $C^r$\!-submanifold of $\mathcal M$.}
 \medskip
 
If we further regard the moment of inertia as the smooth map
 $$
 I\colon(\M^3)^N\to[0,\infty),
 $$         
we have the following properties.  
 \begin{lemma}
 Assume that the masses $m_1,\dots, m_N$ are in $\S^3$, and consider $c\ge 0$, not of the form $c=\sum_{i=1}^Nm_i\mu_i$, where $\mu_1,\dots, \mu_N\in\{0,1\}$. Then the set
 $I^{-1}(c)$ is a smooth manifold.
 \end{lemma}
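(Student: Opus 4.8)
The plan is to apply the Regular Value Theorem to the smooth map $I\colon(\S^3)^N\to[0,\infty)$, $I(\q)=\sum_{i=1}^N m_i(x_i^2+y_i^2)$, at the level $c$. It suffices to show that under the hypothesis on $c$, every $\q\in I^{-1}(c)$ is a regular point, i.e.\ $DI_{\q}$ is surjective onto $T_{I(\q)}[0,\infty)\cong\R$, which (since the target is one-dimensional) is the same as showing $DI_{\q}\ne 0$, i.e.\ $\nabla_{\q}I\ne 0$ as a tangent vector to $(\S^3)^N$ at $\q$. By Proposition \ref{vanish_of_I}, $\nabla_{\q_i}I=0$ on $(\S^3)^N$ if and only if $\q_i\in\S^1_{xy}\cup\S^1_{zw}$; hence $\nabla_{\q}I=0$ precisely when $\q_i\in\S^1_{xy}\cup\S^1_{zw}$ for every $i=\overline{1,N}$.

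First I would observe that if $\q_i\in\S^1_{xy}$ then $x_i^2+y_i^2=1$, so $m_i(x_i^2+y_i^2)=m_i$, while if $\q_i\in\S^1_{zw}$ then $x_i^2+y_i^2=0$, so $m_i(x_i^2+y_i^2)=0$. Therefore, if $\q$ is a critical point of $I$, then $I(\q)=\sum_{i=1}^N m_i\mu_i$ with each $\mu_i\in\{0,1\}$ (namely $\mu_i=1$ when $\q_i\in\S^1_{xy}$ and $\mu_i=0$ when $\q_i\in\S^1_{zw}$; if $\q_i$ lies in both, which is impossible since $\S^1_{xy}\cap\S^1_{zw}=\emptyset$, either choice would do). Contrapositively, if $c$ is not of the form $\sum_{i=1}^N m_i\mu_i$ with $\mu_i\in\{0,1\}$, then no point of $I^{-1}(c)$ is a critical point of $I$; that is, $c$ is a regular value.

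The conclusion then follows immediately: by the Regular Value Theorem applied to the $C^\infty$ map $I$ (restricted, if one wishes, to the preimage of a neighbourhood of $c$ in $(0,\infty)$, on which $I$ is genuinely a map into a manifold), the set $I^{-1}(c)$ is a smooth submanifold of $(\S^3)^N$. I would remark in passing that this argument also identifies the "bad" levels: $c=\sum m_i\mu_i$ are exactly the values at which $I^{-1}(c)$ may fail to be smooth, which is the phenomenon alluded to in the preceding paragraph of the paper and which has no analogue in the Euclidean case.

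There is essentially no serious obstacle here; the only point requiring a little care is the bookkeeping that a critical point of $I$ forces \emph{every} body onto $\S^1_{xy}\cup\S^1_{zw}$ and hence forces $c$ into the stated arithmetic form — this is exactly where Proposition \ref{vanish_of_I} is used, and it is the crux of the lemma. The rest is a direct invocation of the Regular Value Theorem as stated in the excerpt.
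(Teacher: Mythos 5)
Your argument is correct and is essentially identical to the paper's own proof: both identify the critical points of $I$ via Proposition \ref{vanish_of_I} as exactly those configurations with every $\q_i\in\S^1_{xy}\cup\S^1_{zw}$, deduce that the critical values are precisely the sums $\sum_{i=1}^N m_i\mu_i$ with $\mu_i\in\{0,1\}$, and then invoke the Regular Value Theorem. Your added remark about restricting the target to an open neighbourhood of $c$ is a minor tidying of a point the paper leaves implicit, but it does not change the argument.
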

 \begin{proof}
 Suppose that $c\ge 0$ is a critical value for $I$. This is equivalent to saying that there exists a $\q=(\q_1,\dots,\q_N)$
 such that $\q\in I^{-1}(c)$ and
 \begin{equation*}
 \nabla_{\q_1}I(\q)=\dots=\nabla_{\q_N}I(\q)=\bf 0,
 \end{equation*} 
 which implies that $\q_i\in \S^1_{xy}\cup \S^1_{zw}$ by Proposition \ref{vanish_of_I} in Section \ref{central-config}. Then $x_i^2+y_i^2=0$ or $1$ and $c=\sum_{i=1}^Nm_i\mu_i$, where $\mu_1,\dots, \mu_N\in\{0,1\}$, a remark that  completes the proof.
 \end{proof}
 \begin{lemma}
 Assume that the masses $m_1,\dots,m_N$ are in $\H^3$, and
 consider $c\ge 0$. Then $I^{-1}(c)$ is always a smooth manifold.
 \end{lemma}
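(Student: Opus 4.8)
The plan is to invoke the Regular Value Theorem, exactly as in the previous lemma, but to handle the single exceptional value $c=0$ by direct inspection. First I would locate the critical points of the smooth map $I\colon(\H^3)^N\to[0,\infty)$: a configuration $\q=(\q_1,\dots,\q_N)$ is critical precisely when $\nabla_{\q_i}I(\q)=\mathbf 0$ for every $i$, and by Proposition \ref{vanish_of_I} this occurs if and only if $\q_i\in\H^1_{zw}$ for all $i$. Since every point of $\H^1_{zw}$ has $x_i=y_i=0$, such a critical configuration satisfies $I(\q)=\sum_{i=1}^N m_i(x_i^2+y_i^2)=0$. Hence the only possible critical value of $I$ is $c=0$, so every $c>0$ is a regular value, and the Regular Value Theorem immediately gives that $I^{-1}(c)$ is a $C^\infty$ submanifold of $(\H^3)^N$ for such $c$.

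It then remains to treat $c=0$. Here I would simply note that $I^{-1}(0)=(\H^1_{zw})^N$: the masses being positive and each summand $m_i(x_i^2+y_i^2)$ nonnegative, $I(\q)=0$ forces $x_i=y_i=0$ for every $i$, and conversely any such configuration lies in $I^{-1}(0)$. Now $\H^1_{zw}$ is a smooth one-dimensional submanifold of $\H^3$ (it is the branch $w>0$ of a hyperbola, equivalently $\H^3\cap\{x=y=0\}$, and the restrictions of the coordinate functions $x$ and $y$ to $\H^3$ have $0$ as a regular value), so its $N$-fold product $(\H^1_{zw})^N$ is a smooth submanifold of $(\H^3)^N$. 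Therefore $I^{-1}(c)$ is a smooth manifold for every $c\ge0$.

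The only point needing a little care is precisely the value $c=0$, and the reason the statement is cleaner than its $\S^3$ counterpart is structural: on $\H^3$ the quantity $w_i^2-z_i^2$ equals $1+x_i^2+y_i^2$ and is thus always positive, so the locus on which $\nabla_{\q_i}I$ vanishes reduces to $\H^1_{zw}$ alone, whereas on $\S^3$ the extra circle $\S^1_{xy}$ also appears. This lone family of flat directions sits entirely at the level $I=0$, so the list of bad values $c=\sum_{i=1}^N m_i\mu_i$ from the $\S^3$ lemma collapses to the single value $c=0$, at which the level set happens to be the smooth manifold $(\H^1_{zw})^N$ even though it does not come from a regular value. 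I do not anticipate any genuine obstacle beyond setting out this dichotomy carefully.
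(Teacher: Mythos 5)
Your proposal is correct and follows essentially the same route as the paper: identify $c=0$ as the only possible critical value via Proposition \ref{vanish_of_I}, apply the Regular Value Theorem for $c>0$, and observe that $I^{-1}(0)=(\H^1_{zw})^N$ is itself a smooth manifold (the paper notes it is homeomorphic to $\R^N$, you note it is a product of smooth one-dimensional submanifolds — both suffice). No gaps.
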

 \begin{proof}
  Suppose that $c\ge 0$ is a critical value for $I$. This is equivalent to saying that there exists a $\q=(\q_1,\dots,\q_N)$
  such that $\q\in I^{-1}(c)$ and
  \begin{equation*}
  \nabla_{\q_1}I(\q)=\dots=\nabla_{\q_N}I(\q)=\bf 0,
  \end{equation*} 
  which implies that $\q_i\in \H^1_{zw}$ by Proposition \ref{vanish_of_I} in Section \ref{central-config}. Then $x_i^2+y_i^2=0$  and $c=0$. Moreover,  $I^{-1}(0)= (\H^1_{zw})^N$, which is homeomorphic with $\R^N$, a remark that  completes the proof.
  \end{proof}

       \subsection{The existence result}
  The characterization of central configurations as critical points provides an easy way to see that ordinary central configurations exist, i.e.\ that the complicated criteria developed earlier always have solutions for $\lambda \ne 0$.
\begin{theorem}\label{existence}
Assume that the  masses $m_1,\dots,m_N$ are in $\S^3$ or $\H^3$. Then for any positive values these masses take, there is at least one ordinary central configuration in $\S^3$ and at least one ordinary central configuration in $\H^3$.
\end{theorem}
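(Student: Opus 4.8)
The plan is to realize ordinary central configurations as critical points of the force function $U$ restricted to a level set of the moment of inertia, and then invoke compactness together with the behavior of $U$ near the singularity set $\D$ to guarantee that a maximum exists. Fix a constant $c>0$ that is \emph{not} of the form $\sum_{i=1}^N m_i\mu_i$ with $\mu_i\in\{0,1\}$ (in $\S^3$) or simply any $c>0$ (in $\H^3$); by the two lemmas on the structure of $I^{-1}(c)$ the set $S_c=\{\q\in(\M^3)^N\setminus\D\mid I(\q)=c\}$ is a smooth manifold. By the proposition identifying ordinary central configurations with critical points of $U|_{S_c}$ (with $-\lambda$ the Lagrange multiplier), it suffices to produce a critical point of $U|_{S_c}$ at which the multiplier $\lambda$ is nonzero; but this last point is automatic, since the corollaries computing $\lambda$ show $\lambda<0$ on $\H^3$ and, on $\S^3$, a critical point of $U|_{S_c}$ with $\lambda=0$ would be a special central configuration, which by Corollary \ref{SCC_on_S1&S1} forces all $\q_i\in\S^1_{xy}\cup\S^1_{zw}$ and hence $I(\q)=\sum m_i\mu_i$, contradicting the choice of $c$.

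So the real content is: \emph{$U|_{S_c}$ attains a maximum on $S_c$}. First I would take the closure $\overline{S_c}$ of $S_c$ inside the compact set $(\M^3)^N$ (in the $\H^3$ case one must be slightly careful, since $\H^3$ is noncompact, but $S_c=I^{-1}(c)$ with $I=\sum m_i(x_i^2+y_i^2)=c$ bounded already confines the $x_i,y_i$ coordinates, and $w_i^2=1+x_i^2+y_i^2+z_i^2$ together with a further argument is needed — see the obstacle below). On the compact manifold $\S^3$ the set $\overline{S_c}=I^{-1}(c)$ is compact. The force function $U(\q)=\sum_{i<j}m_im_j\,\ctn d_{ij}$ tends to $+\infty$ as $\q\to\D^{+}$, i.e. as two bodies collide ($d_{ij}\to 0^+$), and one checks that as $\q$ approaches the rest of $\partial S_c$ — the antipodal singularity set $\D^-$ on $\S^3$ — $U$ stays bounded or tends to $-\infty$, so it does not obstruct a maximum. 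Hence $U|_{S_c}$, being continuous on $S_c$, smooth, and blowing up to $+\infty$ only near the "collision" part of the boundary while remaining bounded above away from it, attains its supremum at an interior point $\q^\ast\in S_c$; more carefully, pick any $\q_0\in S_c$ and restrict attention to the sublevel-complement $\{\q\in S_c: U(\q)\ge U(\q_0)\}$, whose closure in $(\M^3)^N$ misses $\D^-$ (where $U$ is bounded above) and misses $\D^+$ only in the limit, so it is compact, giving the maximizer. That maximizer is a critical point of $U|_{S_c}$, hence an ordinary central configuration.

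The main obstacle I anticipate is controlling the behavior of $U$ on the boundary of $S_c$ carefully enough to run the compactness argument — specifically, (a) showing that the part of $\partial(\M^3)^N\setminus S_c$ consisting of antipodal or "at infinity" configurations does not allow $U$ to approach its supremum there, and (b) in the hyperbolic case, confirming that the constraint $I=c$ plus the manifold equation genuinely produces a set on which a maximizing sequence cannot escape to infinity: if $x_i^2+y_i^2$ is bounded but $z_i,w_i\to\infty$, then $d_{ij}\to\infty$ for the pairs that separate, so $\coth d_{ij}\to 1$ and $U$ stays bounded, meaning a maximizing sequence again accumulates only where two bodies collide, which is an interior obstruction handled by the $U\to+\infty$ blow-up. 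Once these boundary estimates are in place, the existence of the maximizer, and therefore of an ordinary central configuration for arbitrary positive masses in both $\S^3$ and $\H^3$, follows. Finally, I would note that the argument in fact yields \emph{infinitely many} equivalence classes in many cases, but the theorem as stated only claims existence, so the maximizer suffices.
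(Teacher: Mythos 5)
Your argument breaks at its central step because the extremum points the wrong way. You propose to find a \emph{maximum} of $U|_{S_c}$ and justify its existence by the fact that $U\to+\infty$ as $\q\to\D^+$. But that blow-up means precisely that $\sup_{S_c}U=+\infty$: the supremum is not attained, and your set $\{\q\in S_c:\ U(\q)\ge U(\q_0)\}$ is not compact inside $S_c$ --- its closure in $(\M^3)^N$ contains collision points of $\D^+$, since every configuration sufficiently close to a collision belongs to it. The divergence of $U$ at collisions is useful only for producing a \emph{minimum}, because it forces a minimizing sequence to stay away from $\D^+$. Once you switch to minimizing, however, the obstruction is exactly the part of the boundary you dismiss as harmless: on $\S^3$ the antipodal set $\D^-$, where $U\to-\infty$ (so the infimum of $U$ over all of $S_c$ is $-\infty$ and is never attained), and on $\H^3$ escape to infinity, where $\coth d_{ij}\to 1$ and $U$ decreases toward its unattained infimum $\sum_{i<j}m_im_j$. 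These are the difficulties the paper's proof is organized around: in $\H^3$ it confines the bodies to $\H^1_{xw}$, where $I=c$ is an ellipsoid, hence compact, with boundary consisting only of collisions; in $\S^3$ it confines the bodies to $\S^2_{xyz}$, chooses $0<c<m_1\le\cdots\le m_N$, and isolates a connected component $\mathcal J$ of $S_c$ (all bodies in the polar cap $x_i^2+y_i^2<c/m_1$, $z_i>0$) whose boundary, by a connectedness argument, lies entirely in $\D^+$. On each of these sets $U$ attains an interior minimum, which is the desired critical point.

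A secondary issue: to rule out $\lambda=0$ you invoke the corollary on configurations in $\S^1_{xy}\cup\S^1_{zw}$ in the wrong direction. That corollary says a central configuration with \emph{all} bodies on $\S^1_{xy}\cup\S^1_{zw}$ is special; it does not say that every special central configuration has all its bodies there (the regular tetrahedron on $\S^2_{xyz}$ is a counterexample). So your choice of $c$ does not by itself exclude the possibility that the critical point you find is a special central configuration rather than an ordinary one.
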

\begin{proof}
Let us first prove the result in $\H^3$.
In general, the manifold $I^{-1}(c)$ is not compact in this case. However, this changes if we confine all masses to the hyperbolic circle $\H^1_{xw}$, since the set $I=c>0$ is homeomorphic to an ellipsoid. Then $U$ defines a smooth function on the open subset $S_c$, and the boundary of $S_c$ is composed of points in the singularity set. Since the ellipsoid is compact and $U\to +\infty$ as $\q$ approaches the boundary of $S_c$,  it follows that $U$ attains a minimum at some non-singular configuration $\q$. This will be a  critical point of $U$ on $S_c$ and hence an ordinary central configuration. 

 Let is now prove the result in $\S^3$.
In this case, for the proper value $c$, $S_c$ is a compact manifold since  $(\mathbb{S}^3)^N$ is compact. The problem is that there are two types of singularities, since we can write
$\Delta= \Delta^+ \bigcup \Delta^-,$ where 
$$
\Delta^+=\cup_{1\le i<j\le N}\{\q\in(\S^3)^N \ \! | \ \! \q_i=\q_j \},\ \
\Delta^-=\cup_{1\le i<j\le N}\{\q\in(\S^3)^N \ \! | \ \! \q_i=-\q_j \}.
$$ 
On $\Delta^+$, $U$ is $\infty$; on $\Delta^-$ it is $-\infty$. However,  we can prove the existence of central configurations  by finding a connected component of $S_c$, whose boundary is composed of points that lie only in $\Delta^+$.   
To find such a connected component, we further confine the particles to $\S_{xyz}^2$ and order the masses such that 
$$
0<c<m_1\le\dots\le m_N,
$$
where $c$ is a constant of our choice.
Then $S_c$ is a smooth manifold.
 Let us further choose a configuration $\q\in S_c$ with all bodies lying near the North Pole $(0,0,1)$,  which means that $z_i>0,\ \! i=\overline{1,N}$. Denote by $\mathcal J$ the connected component of the manifold $S_c$ that contains the configuration $\q$. We claim that the boundary $\partial\mathcal J$ of $\mathcal J$ contains only points from $\Delta^+$.

To prove this claim, we first define the sets
$$
\mathcal U=\{(x,y,z)\in\S_{xyz}^2\ \! |\ \! x^2+y^2<c/m_1,\ \! z>0\},
$$
$$  
\mathcal V=\{(x,y,z)\in\S_{xyz}^2\ \! |\ \! x^2+y^2<c/m_1,\ \! z\le 0\}.
$$
Since $I(\q)=\sum_{i=1}^Nm_i(x_i^2+y_i^2)\ge m_1(x_i^2+y_i^2), \ \! i=\overline{1,N},$ it follows that
$$
x_i^2+y_i^2\le c/m_1, \ \! i=\overline{1,N},
$$
which means that for any configuration $\q\in\mathcal J$ each body
lies either in $\mathcal U$ or in $\mathcal V$. 

\begin{figure}[!h]
      	\centering
         \begin{tikzpicture}
         \vspace*{0cm}\hspace*{-3.5cm}
         \draw (0,  0) circle (2);
         \draw [dashed] (0, -2.2)--(0, 2);
        \draw [->] (0, 2)--(0, 2.3) node (zaxis) [left] {$z$};
         \draw [dashed] ( -2.2,0)--(2,0);
        \draw [->] (2, 0)--( 2.3,0) node (yaxis) [below] {$x$};
 \draw [dashed] [domain=-2:2] plot (\x, {sqrt(4-\x*\x)/4}); 
                \draw  [domain=-1.95:1.95] plot (\x, {-sqrt(4-\x*\x)/4});

         \draw [dashed] (0,1.73) ellipse (1  and .1); 
                        \draw [dashed] (0,-1.73) ellipse (1  and .1); 
           \draw [domain=-1:1] plot (\x, {1.73-sqrt(1-\x*\x)/10}); 
                       \draw [domain=-1:1] plot (\x, {-1.73-sqrt(1-\x*\x)/10});
       \node[above right] at (1,1.75) {$\mathcal{U}$}; 
                     \node[right] at (1,-1.75) {$\mathcal{V}$};

         \fill (80:2) circle (2.5pt) node[above right] {$m_1$};
          \fill ( 100:1.9) circle (2.5pt) node[ above left] {$m_2$}; 
           
         \vspace*{0cm}\hspace*{7 cm} 
         \draw (0,  0) circle (2);
                 \draw [dashed] (0, -2.2)--(0, 2);
                \draw [->] (0, 2)--(0, 2.3) node (zaxis) [left] {$z$};
                 \draw [dashed] ( -2.2,0)--(2,0);
                \draw [->] (2, 0)--( 2.3,0) node (yaxis) [below] {$x$};
  \draw [dashed] [domain=-2:2] plot (\x, {sqrt(4-\x*\x)/4}); 
                 \draw  [domain=-1.95:1.95] plot (\x, {-sqrt(4-\x*\x)/4}); 
               
                \draw [dashed] (0,1.73) ellipse (1  and .1); 
                \draw [dashed] (0,-1.73) ellipse (1  and .1); 
                \draw [domain=-1:1] plot (\x, {1.73-sqrt(1-\x*\x)/10}); 
              \draw [domain=-1:1] plot (\x, {-1.73-sqrt(1-\x*\x)/10});
                
               \node[above left] at (-1,1.75) {$\mathcal{U}$}; 
               \node[right] at (1,-1.75) {$\mathcal{V}$};

                \fill (80:2) circle (2.5pt)  node[above right] {$m_1$};
                          \fill ( 265:1.95) circle (2.5pt)  node[ below left] {$m_2$};  
        
          \end{tikzpicture}
     \caption{$\q$ and $\tilde\q$ on $\S^2_{xyz}$    }
       \label{fig:exist+}  
       \end{figure}

Let us now suppose that $\partial\mathcal J\cap\Delta^-\ne\emptyset$.  Then there must exist a configuration
$\tilde\q=(\tilde\q_1,\dots,\tilde\q_N)\in\mathcal J$ such that one body is in $\mathcal U$ and the another in $\mathcal V$, say, $\tilde\q_1\in\mathcal U$ and $\tilde\q_2\in\mathcal V$. Since $\mathcal J$ is connected, it is also path connected. Then there is a path in $\mathcal J$ connecting $\q$ and $\tilde{\q}$, so there is a path that connects $\q_2\in\mathcal U$ and $\tilde{\q}_2\in\mathcal V$, and this path must lie in $\mathcal U$ and $\mathcal V$. But this is impossible since $\mathcal U\cap\mathcal V=\emptyset$. Thus we have verified our claim that $\mathcal J$ is a connected component of the compact manifold $S_c$ whose boundary consists only of points from $\Delta^+$. 

Therefore $U\to +\infty$ as $\q$ approaches $\partial\mathcal J$. 
It follows that $U$ attains a minimum at some configuration $\q$, which is then a  critical point of $U$ on $S_c$, hence an ordinary central configuration. This remark completes the proof. 
\end{proof}
 \subsection{The Wintner-Smale conjecture in spaces of constant curvature}

Notice that the proof in the previous subsection also works for other constant values of $I$, namely any $\bar{c}>0$ in $\H^3$ and any $\bar{c}<c$  in $\S^3$, and that these central configurations with different values of $I$ are not equivalent. Hence there always exist central configurations on $S_c$ for $c$ belonging to some open intervals. So we have the following obvious consequence of the above existence result.
\begin{corollary}
Assume that the  masses $m_1,\dots,m_N$ are in $\S^3$ or $\H^3$. Then for any positive values these masses take, the  set of ordinary central configurations  has the power of the continuum. 
\end{corollary}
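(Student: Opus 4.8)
The plan is to harvest from the proof of Theorem~\ref{existence} not a single ordinary central configuration but an entire one-parameter family of them, parametrized by the value $c$ of the moment of inertia, and then to observe that $c$ is an invariant that separates the corresponding equivalence classes.

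First I would rerun the construction in the proof of Theorem~\ref{existence} with the level value left free. In $\H^3$, confine all $N$ bodies to the geodesic $\H^1_{xw}$; for every $c>0$ the set $\{\,\q\in(\H^1_{xw})^N: I(\q)=c\,\}$ is homeomorphic to an ellipsoid, hence compact, and $U\to+\infty$ as $\q$ approaches its intersection with the singularity set $\D$, so $U$ attains a minimum on $S_c$ at some nonsingular $\q^{(c)}$, which is an ordinary central configuration with $I(\q^{(c)})=c$. In $\S^3$, order the masses so that $0<c_*<m_1\le\cdots\le m_N$ and confine the bodies to $\S^2_{xyz}$; for every $c$ with $0<c<c_*$ the auxiliary sets $\mathcal U$, $\mathcal V$ used in the proof of Theorem~\ref{existence} remain disjoint, so the connected component $\mathcal J$ of $S_c$ through a configuration clustered near the North Pole has boundary contained in $\Delta^+$, whence $U\to+\infty$ on $\partial\mathcal J$ and $U$ attains a minimum on $\mathcal J$ at an ordinary central configuration $\q^{(c)}$ with $I(\q^{(c)})=c$. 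In either case this produces a map $c\mapsto[\q^{(c)}]$ from an open interval of positive reals — $(0,\infty)$ in $\H^3$, $(0,c_*)$ in $\S^3$ — into the set of equivalence classes of ordinary central configurations in the sense of Definition~\ref{equi-central configuration}.

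Next I would argue that this map is injective. Equivalence of central configurations allows only the action of $SO(2)\times SO(2)$ (resp.\ $SO(2)\times SO(1,1)$), and the moment of inertia $I(\q)=\sum_i m_i(x_i^2+y_i^2)$ is manifestly invariant under rotations of the $xy$- and $zw$-planes; hence equivalent central configurations have equal moments of inertia. Therefore $c\ne c'$ forces $[\q^{(c)}]\ne[\q^{(c')}]$, so the set of equivalence classes of ordinary central configurations — and a fortiori the set of ordinary central configurations itself — has cardinality at least $2^{\aleph_0}$. For the reverse inequality, $(\M^3)^N$ is a second-countable finite-dimensional manifold, hence has cardinality exactly $2^{\aleph_0}$, and the set of ordinary central configurations is a subset of it. The two bounds give the asserted power of the continuum.

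The one point requiring genuine attention — the remainder being bookkeeping about the equivalence relation — is to confirm that the minimization argument of Theorem~\ref{existence} really persists as $c$ ranges over an interval rather than being pinned to one value: one must check that the compactness of $S_c$ (or of the relevant component $\mathcal J$), the blow-up $U\to+\infty$ at the boundary, and, in the spherical case, the claim $\partial\mathcal J\subseteq\Delta^+$ hold uniformly for all $c$ in the stated interval. I expect no real difficulty here, since none of those arguments used anything about the particular value of $c$ beyond the inequalities $0<c<c_*$ in $\S^3$ and $c>0$ in $\H^3$; for instance the sets $\mathcal U,\mathcal V$ are cut out by $x^2+y^2<c/m_1$ and stay disjoint for every such $c$. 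What remains is merely to make explicit the uniformity that the proof of Theorem~\ref{existence} leaves implicit.
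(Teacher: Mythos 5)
Your proposal is correct and follows essentially the same route as the paper: the authors likewise observe that the minimization argument of Theorem \ref{existence} goes through verbatim for every $c$ in an open interval ($c>0$ in $\H^3$, $0<\bar c<c$ in $\S^3$), and that central configurations with different values of $I$ cannot be equivalent because the admissible group $SO(2)\times SO(2)$ (resp.\ $SO(2)\times SO(1,1)$) preserves $I$. The only additions on your side are the explicit upper cardinality bound and the remark that the set of configurations (not just of classes) a fortiori has the power of the continuum, both of which the paper leaves implicit.
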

That is to say, if we extend the Wintner-Smale Conjecture (Smale's 6th problem), which asks whether for some given masses, $m_1,\dots,m_N>0$, the number of classes of planar central configurations for the Newtonian $N$-body problem is finite or not, \cite{Smale}, to the curved $N$-body problem, i.e.\ whether for some given
masses, $m_1,\dots,m_N>0$, the number of classes of central configurations for the curved $N$-body problem is finite or not,
 then this extension has an obvious and uninteresting answer. Also, we have already seen that  any two masses cannot form  a special central configuration, and that three masses can form special central configurations if and only if 
  the  mass triple $(m_1, m_2, m_3)$ belongs to some subset of $(\R^+)^3$, \cite{Diacu15-1}. In light of these facts, we can modify the conjecture as follows. 
 
 \medskip 
  \begin{enumerate}
  \item In the curved $N$-body problem, for given positive masses $m_1,\dots, m_N$ and all possible values of $c$, is the number of ordinary central configurations with $I(\q)=c$  finite?
  \item In the curved $N$-body problem in $\S^3$, for given positive masses $m_1,\dots, m_N$, is the number of special central configurations (if they exist) finite?
  \end{enumerate}
 \medskip
We can also formulate this problem as follows.

  \medskip
     \begin{enumerate}
     \item  For given masses $m_1,$ ..., $m_N$ and all possible $c$, is the number of critical points of $\hat{U}$ on $\hat{S_c}$  finite?
    \item For given positive masses $m_1,$ ..., $m_N$ in $\S^3$, is the number of critical points of $\hat{U}$ on $((\S^3)^N\setminus \D) /SO(4)$ (if they exist) finite?
    \end{enumerate}
   \medskip 

We will see in Section \ref{Moulton} that even for two equal masses, $m_1=m_2=:m$, there are infinitely many critical points of the  function  $\hat{U}$ on $\hat{S}_c$ when $c=m$. So the above nontrivial formulation of the Wintner-Smale conjecture in spaces of constant curvature is not difficult to answer in some particular cases, although the problem remains very difficult in general.

 \section{A property of central configurations}

 Recall that for central configurations of the Newtonian $N$-body problem in $\R^3$, the centre of mass of the configuration is set at the origin, i.e.
 \[ \sum_{i=1}^Nm_ix_i=\sum_{i=1}^Nm_iy_i=\sum_{i=1}^Nm_iz_i=0. \]
In this section we will provide an analogue of this property in $\M^3$. However, we should  keep in mind that, in the curved $N$-body problem, the centre of mass of a geometric configuration,
whatever definition we take for it, has no dynamical significance.
So let us state and prove the following result.
 
 \begin{theorem}
 Let $\q=(\q_1,\dots,\q_N),\ \q_i=(x_i,y_i,z_i,w_i)^T\in\M^3,\ i=\overline{1,N},$ be an ordinary central configuration. Then we have the relationships
 \begin{equation}\label{masscentre}
 \sum_{i=1}^Nm_ix_iz_i=\sum_{i=1}^Nm_ix_iw_i=\sum_{i=1}^Nm_iy_iz_i=\sum_{i=1}^Nm_iy_iw_i=0.
 \end{equation}
 \end{theorem}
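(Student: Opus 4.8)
The plan is to exploit the angular momentum integrals together with the rigid-motion structure of the relative equilibria associated to a central configuration. By Corollary \ref{CC_RE}, an ordinary central configuration $\q$ in $\M^3$ gives rise to a relative equilibrium of the form $A_{\alpha,\beta}(t)\q$ (on $\S^3$) or $B_{\alpha,\beta}(t)\q$ (on $\H^3$) for a whole one-parameter family of admissible $(\alpha,\beta)$. For such a rigid motion the six angular momentum integrals $\omega_{xy},\dots,\omega_{zw}$ are conserved; moreover, since the motion is genuinely a one-parameter group orbit, each $\omega$ must in fact take a value that is consistent with $\q$ being the initial configuration, and this forces algebraic relations on $\q$. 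First I would compute $\dot\q_i(0)$ explicitly from $\dot\q(t)=\boldsymbol{\xi}\, Q(t)\q$ evaluated at $t=0$, namely $\dot\q_i(0)=\boldsymbol{\xi}_1\q_i=(-\alpha y_i,\alpha x_i,-\beta w_i,\beta z_i)^T$ in the $\S^3$ case (and the analogous $\boldsymbol{\xi}_2\q_i=(-\alpha y_i,\alpha x_i,\beta w_i,\beta z_i)^T$ in $\H^3$). Then I would substitute these velocity components into the definitions of the six $\omega$'s.

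The key point is what happens with the "mixed'' components $\omega_{xz},\omega_{xw},\omega_{yz},\omega_{yw}$. Plugging in $\dot x_i=-\alpha y_i$, $\dot z_i=-\beta w_i$, etc., one gets, for instance,
\[
\omega_{xz}=\sum_i m_i(\dot x_i z_i - x_i\dot z_i)=\sum_i m_i(-\alpha y_i z_i + \beta x_i w_i),
\]
and similarly $\omega_{xw}=\sum_i m_i(-\alpha y_i w_i - \beta x_i z_i)$, $\omega_{yz}=\sum_i m_i(\alpha x_i z_i + \beta y_i w_i)$, $\omega_{yw}=\sum_i m_i(\alpha x_i w_i - \beta y_i z_i)$. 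Now here is the leverage: because an ordinary central configuration produces relative equilibria for infinitely many choices of $(\alpha,\beta)$ — in particular one can vary $\alpha$ and $\beta$ independently along the admissible curve, or invoke the existence of both a positive elliptic ($\beta=0$) member and a genuinely elliptic-elliptic member, as in Examples 1 and 2 — and each such relative equilibrium is a periodic or quasi-periodic bounded orbit, the corresponding angular momenta cannot grow, so they are consistent with the configuration being central. More directly: the central configuration equation itself, in the decomposed form of Criteria \ref{criterion++} and \ref{criterion--}, already contains the off-diagonal relations $\sum_j m_j(x_iy_j-x_jy_i)/\sn^3 d_{ij}=0$ and $\sum_j m_j(z_iw_j-z_jw_i)/\sn^3 d_{ij}=0$; summing the first over $i$ gives $0$ trivially, but pairing it appropriately against $z_i$ or $w_i$ and using the structure of $\nabla_{\q_i}U$ being a linear combination of the $\q_j$'s should produce exactly the four bilinear relations \eqref{masscentre}. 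So the cleanest route is: take the dot product of the central configuration equation $\nabla_{\q_i}U=\lambda\nabla_{\q_i}I$ with the constant vectors $e_z=(0,0,1,0)^T$ and $e_w=(0,0,0,1)^T$ after multiplying by suitable combinations, sum over $i$, and observe that the $U$-side is antisymmetric in $(i,j)$ and hence vanishes, while the $I$-side collapses to a multiple of $\sum_i m_i x_i z_i$, etc.

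Concretely, I would proceed as follows. Dot \eqref{CCE} with the vector $(z_i, 0, -x_i, 0)^T$ — which is tangent to $\M^3$ at $\q_i$ — and sum over $i$. On the right side $\nabla_{\q_i}I\cdot(z_i,0,-x_i,0)^T = 2m_i(x_i\rho_i^2 z_i + \sigma z_i r_i^2 x_i)=2m_i x_i z_i(\rho_i^2+\sigma r_i^2)=2\sigma m_i x_i z_i$ using $r_i^2+\sigma\rho_i^2=\sigma$. On the left side, $\sum_i \nabla_{\q_i}U\cdot(z_i,0,-x_i,0)^T = \sum_{i\ne j}\frac{m_im_j}{\sn^3 d_{ij}}\big[(x_j-\csn d_{ij}\,x_i)z_i - (z_j - \csn d_{ij}\,z_i)x_i\big]=\sum_{i\ne j}\frac{m_im_j}{\sn^3 d_{ij}}(x_j z_i - z_j x_i)$, and this double sum is antisymmetric under $i\leftrightarrow j$, hence zero. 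Therefore $\sum_i m_i x_i z_i = 0$. Repeating with $(w_i,0,0,-x_i)^T$, $(0,z_i,-y_i,0)^T$, and $(0,w_i,0,-y_i)^T$ yields the other three identities in \eqref{masscentre}. The main thing to be careful about is checking that these four test vectors are genuinely tangent to $\M^3$ at $\q_i$ — i.e. that $\q_i\cdot(z_i,0,-x_i,0)^T=0$ and the like, which follows immediately since $x_iz_i + \sigma\cdot 0 - x_iz_i=0$ — so that dotting the manifold-gradient against them is legitimate and no normal-component corrections appear; and that the hypothesis "ordinary'' is used only to guarantee $\lambda\ne 0$ is not actually needed here, since the argument gives \eqref{masscentre} regardless of $\lambda$ (the $U$-side vanishes on its own). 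The only genuinely delicate bookkeeping is the antisymmetry cancellation on the $U$-side, which is routine once one writes $\nabla_{\q_i}U$ in the summed-force form from \eqref{equation}.
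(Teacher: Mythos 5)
Your third paragraph is, in substance, exactly the paper's own proof: pair the central configuration equation $\nabla_{\q_i}U=\lambda\nabla_{\q_i}I$ against the tangent vectors $(z_i,0,-x_i,0)^T$ and its three companions, sum over $i$, and kill the force side by the antisymmetry of $x_jz_i-x_iz_j$ under $i\leftrightarrow j$. (The first two paragraphs, about angular momentum integrals of the associated relative equilibria, never actually produce an argument and can be discarded.) Two corrections are needed. First, and most importantly, your closing claim that the hypothesis ``ordinary'' (equivalently $\lambda\ne 0$) ``is not actually needed'' is wrong: the antisymmetry cancellation kills the \emph{left}-hand side, so what the summation yields is $0=2\lambda\sum_{i}m_ix_iz_i$, and without $\lambda\ne 0$ you cannot divide. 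This is precisely where the paper invokes ordinariness (a central configuration with $\lambda=0$ is special), so the hypothesis is load-bearing in this argument, not removable. Second, the test vectors whose nonzero entries occupy a $w$-slot must carry a $\sigma$: in $\H^3$ the vector $(w_i,0,0,-x_i)^T$ is not tangent to $\M^3$ at $\q_i$ (one gets $\q_i\cdot(w_i,0,0,-x_i)^T=(1-\sigma)x_iw_i=2x_iw_i$ for $\sigma=-1$), and the resulting bilinear expression $x_jw_i-\sigma x_iw_j$ is symmetric rather than antisymmetric in $i\leftrightarrow j$, so the cancellation fails; the correct choices are $(w_i,0,0,-\sigma x_i)^T$ and $(0,w_i,0,-\sigma y_i)^T$, as in the paper. (A harmless further slip: $\rho_i^2+\sigma r_i^2=1$, not $\sigma$, so the right-hand pairing is $2\lambda m_ix_iz_i$ rather than $2\sigma\lambda m_ix_iz_i$.)
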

\begin{proof}Recall from the first central configuration equation given by system \eqref{CCE}, $\nabla_{\q_i}U=\lambda\nabla_{\q_i}I,\ i=\overline{1,N}$, that the $i$-th equation can be explicitly written as
 \[ \sum_{j=1,j\ne i}^N\frac{m_im_j[\q_j - \csn d_{ij}\q_i ]}{\sn^3 d_{ij} }= 2\lambda m_i 
          \begin{bmatrix}
           x_i\rho_i^2\\
           y_i\rho_i^2\\
          - \sigma z_ir_i^2\\
           - \sigma w_ir_i^2 
           \end{bmatrix}.\]
Taking dot  products with the vectors
\[ \v_1=\begin{bmatrix} z_i\\0\\-x_i\\0\end{bmatrix}, \ \ 
\v_2=\begin{bmatrix} w_i\\0\\0\\-\sigma x_i\end{bmatrix}, \ \ 
\v_3=\begin{bmatrix} 0\\ z_i\\-y_i\\0\end{bmatrix}, \ \ 
\v_3=\begin{bmatrix} 0\\w_i\\0\\-\sigma y_i\end{bmatrix}, \ \  \] 
leads to the relationships in the above statement.  For example, multiplying by $\v_1$, we obtain  
\[\q_j\cdot \v_1=z_ix_j-x_iz_j, \ \ \q_i\cdot \v_1=0, \ \ \nabla_{\q_i}I\cdot \v_1=2m_ix_iz_i(\rho_i^2-\sigma r_i^2)=2m_ix_iz_i,   \]
and consequently
\[ \sum_{j=1,j\ne i}^N\frac{m_im_j}{\sn^3 d_{ij} } (z_ix_j-x_iz_j) = 2\lambda m_ix_iz_i, \ i=\overline{1,N}. \]
Adding the above $N$ equations, we can conclude that
\begin{equation*} 
 \begin{split}
2\lambda\sum_{i=1}^N m_ix_iz_i&= \sum_{i=1}^N\sum_{j=1,j\ne i}^N\frac{m_im_j}{\sn^3 d_{ij} } (z_ix_j-x_iz_j)\\
&= \sum_{1\le i<j\le N}\frac{m_im_j}{\sn^3 d_{ij} } [(z_ix_j-x_iz_j)+(z_jx_i-x_jz_i)]=0.
 \end{split}
 \end{equation*}
Since the central configuration is not a special central configuration, we necessarily have $\lambda \ne 0$, which implies that  $\sum_{i=1}^Nm_ix_iz_i=0$.  The other relationships can be obtained in the same way, a remark that completes the proof.
\end{proof}

An obvious application of equations \eqref{masscentre} is that of showing with little computational effort why certain configurations are not ordinary central configurations. But these relationships can be also used to find ordinary geodesic central configurations. In $\S^3$, for instance, suppose that the geodesic central configuration is on $\S^1_{xz}$,  and let $\q_i=(\sin \th_i, 0, \cos \th_i,0)^T$. Then 
 \[ 2\sum_{i=1}^N m_i x_iz_i =\sum_{i=1}^N m_i \sin 2 \theta_i=0.  \]  \\
 In  $\mathbb{H}^3$, suppose that the geodesic central configuration is on $\H^1_{xw}$,  and let's take $\q_i=(\sinh \th_i, 0, 0,\cosh \th_i)^T$. Then 
  \[ 2\sum_{i=1}^N m_i x_iw_i =\sum_{i=1}^N m_i \sinh 2 \theta_i=0.  \]  
For $N=2$, the above equations help us find the central configurations, as we will show in Section \ref{Moulton}.

   \section{Examples}\label{example}
In this section we will produce some examples of central configurations of the curved $N$-body problem in $\S^3$ and $\H^3$ and discuss the associated relative equilibria. Several examples will concern special and ordinary central configurations for $N=3$ that lie on the great sphere $\S^2_{xyz}$ and the great hyperbolic sphere $\H^2_{xyw}$. It is known that in the Newtonian $N$-body problem there are only two classes of central configurations for $N=3$, the Lagrangian (equilateral triangles) and the Eulerian (collinear configurations). For nonzero constant curvature, however, the set of central configurations (and therefore that of relative equilibria) is richer, as we will further show. We also include in this section examples of central configurations for $N>3$. Unless otherwise stated, the relative equilibria associated to all these central configurations were already found in \cite{Diacu03} and \cite{Diacu05}.

       \subsection{Acute triangle special central configurations on $\S^1_{xy}$}
      Recall that $\S^1_{xy}=\{(x,y,z,w)^T\in \R^4| x^2+y^2=1, z=w=0\}.$
      Let us assume that three masses, $m_1=\frac{\sin^2 \alpha}{\sin^2 \beta}$, $m_2=\frac{\sin^2 \alpha}{\sin^2 (\alpha+\beta)}$, and $m_3=1$ form an acute scalene triangle on $\S^1_{xy}$ and are given by the coordinates
      $$
      \q=(\q_1,\q_2,\q_3), \ \q_i=(x_i,y_i,z_i,w_i)^T,\ i=1,2,3,
      $$
      \begin{align*}
      x_1&=1, & y_1&=0, & z_1&=0, & w_1=0,\displaybreak[0]\\
      x_2&=\cos\alpha, & y_2&=\sin\alpha, & z_2&=0, & w_2=0,\displaybreak[0]\\
      x_3&=\cos(\alpha+\beta), & y_3&=\sin(\alpha+\beta), & z_3&=0, & w_3=0,
      \end{align*}
      for any fixed $0<\alpha<\pi, 0<\beta<\pi, \pi<\alpha+\beta<2\pi$, see Figure \ref{fig:acute}.  Then it is easy to verify that $\nabla_{\q_i} U=0$ for each $i=1,2,3$, and these configurations are special central configurations. 
     \begin{figure}[!h]
           	\centering
             \begin{tikzpicture}
             \draw (0,  0) circle (1.6);
             \draw[->] (0,0) -- (165:1.6);
             \draw[->] (0,0) -- (285:1.6);
             \draw[->] (0:1.6) -- (165:1.6);
             \draw[->] (0:1.6) -- (285:1.6);
             \draw[->] (165:1.6) -- (285:1.6);
             \draw[->] (0:.6)arc(0:165:.6);
             \draw[->] (165:.5)arc(165:285:.5);
             
             \node[above] at(120:.8) {$\alpha$};
             \node[below] at(220:.7) {$\beta$};
             \fill (0:1.6) circle (2.5pt) node[below] {$m_1$};
             \fill (165:1.6) circle (2.5pt) node[left] {$m_2$};
             \fill (285:1.6) circle (2.5pt) node[below] {$m_3$};

              \draw [->] (0, -1.7)--(0, 1.7) node (zaxis) [left] {$y$};
                    \draw [->](-1.7, 0)--(1.7, 0) node (yaxis) [right] {$x$};
             \end{tikzpicture}
             \caption{An acute triangle special central configuration}
             \label{fig:acute}
             \end{figure}
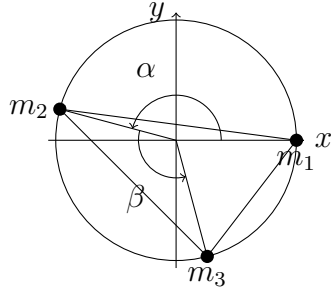
        
Since these special central configurations are confined to $\S^1_{xy}\cup \S^1_{zw}$,  each of them  gives rise to a two-parameter family of associated relative equilibria:  $A_{\a,\b}(t)\q$, $\a,\b \in \R$. The rotation in $zw$-plane does not affect the configuration, so they will be kept on $\S^1_{xy}$, thus forming a one-parameter family of associated relative equilibria,  $A_{\a,0}(t)\q$, $\a\in \R$.  
       \subsection{Regular tetrahedron  special central configurations on $\S_{xyz}^2$}
      Recall that $\S^2_{xyz}=\{(x,y,z,w)^T\in \R^4| x^2+y^2+z^2=1, w=0\}.$
       Let us assume that four masses,  $m_1=m_2=m_3=m_4=:m>0$,  
      form a regular tetrahedron, see Figure \ref{fig:tetra}, 
      $$
      \q=(\q_1,\q_2,\q_3,\q_4), \ \ \q_i=(x_i,y_i,z_i,w_i)^T,\ \ i=1,2,3,4,
      $$
      \begin{align*}
      x_1&=0, & y_1&=0, & z_1&=1, & w_1&=0, \displaybreak[0]\\
      x_2&=0, & y_2&=2\sqrt{2}/3, & z_2&=-1/3, & w_2&=0, \displaybreak[0]\\
      x_3&=-\sqrt{6}/3,& y_3&=-\sqrt{2}/3,& z_3&=-1/3, & w_3&=0,\displaybreak[0]\\
      x_4&=\sqrt{6}/3,& y_4&=-\sqrt{2}/3,& z_4&=-1/3, & w_4&=0.
      \end{align*}
     By symmetry,  it is easy to see that $\sum _{j=1,j\ne i}^4 \F_{ij}=\nabla_{\q_i} U=0$ for $i=1,2,3,4$, and the computations show that this is a special central configuration. 
      \begin{figure}[!h]
      	\centering
         \begin{tikzpicture}
         \draw (0,  0) circle (2);
         \draw [dashed] (0, -2.2)--(0, 2);
        \draw [->] (0, 2)--(0, 2.3) node (zaxis) [left] {$z$};
         \draw [dashed] ( -2.2,0)--(2,0);
        \draw [->] (2, 0)--( 2.3,0) node (yaxis) [below] {$y$};
         \draw [dashed] [domain=-2:2] plot (\x, {sqrt(4-\x*\x)/4}); 
                \draw  [domain=-1.95:1.95] plot (\x, {-sqrt(4-\x*\x)/4});

          \draw[dashed]  (0,-2) to [out=120,in=240] (0,2);
          \draw  (0,-2) to [out=170,in=190] (0,2);
      
       \fill (90:2) circle (2.5pt) node[above right] {$m_1$};   
         \fill (330:2) circle (2.5pt) node[right] {$m_2$};
          \fill ( -0.45,-0.9) circle (2.5pt) node[ right] {$m_3$}; 
           \fill ( -1,-1) circle (2.5pt) node[below left] {$m_4$};  
          
          \end{tikzpicture}
     \caption{Regular tetrahedron  special central configuration    }
       \label{fig:tetra}  
       \end{figure}
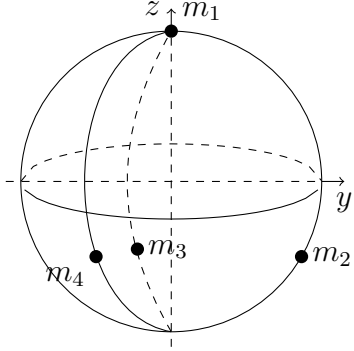

  Since this special central configuration is not confined to $\S^1_{xy}\cup \S^1_{zw}$, it gives rise to a one-parameter family of associated relative equilibria, $A_{\a,\pm \a}(t)\q$, $\a \in \R$. They are periodic orbits, but the motion is not confined to $\S^2_{xyz}$.  
          \subsection{Regular pentatope special central configurations in $\S^3$}
         Let us assume that five masses,  $m_1=m_2=m_3=m_4=m_5=:m>0$  
         form a regular pentatope in  $\S^3$, with
         $$
         \q=(\q_1,\q_2,\q_3,\q_4,\q_5), \ \ \q_i=(x_i,y_i,z_i,w_i)^T,\ \ i=1,2,3,4,5,
         $$
         \begin{align*}
         x_1&=1,& y_1&=0,& z_1&=0,& w_1&=0, \displaybreak[0]\\
          x_2&=-1/4,& y_2&=\sqrt{15}/4,& z_2&=0,& w_2&=0, \displaybreak[0]\\
          x_3&=-1/4,& y_3&=-\sqrt{5}/(4\sqrt{3}),& z_3&=\sqrt{5}/\sqrt{6},& w_3&=0,\displaybreak[0]\\
         x_4&=-1/4,& y_4&=-\sqrt{5}/(4\sqrt{3}),& z_4&=-\sqrt{5}/(2\sqrt{6}),& w_4&=\sqrt{5}/(2\sqrt{2}),  \displaybreak[0]\\
          x_5&=-1/4,& y_5&=-\sqrt{5}/(4\sqrt{3}),& z_5&=-\sqrt{5}/(2\sqrt{6}),& w_5&=-\sqrt{5}/(2\sqrt{2}).
         \end{align*}
         By symmetry,  it is easy to see that $\sum _{j=1,j\ne i}^4 \F_{ij}=\nabla_{\q_i} U=0$ for $i=1,2,3,4,5$, and computations show that this is a special central configuration. Since this special central configuration is not confined to $\S^1_{xy}\cup \S^1_{zw}$, it gives rise to a one-parameter family of associated relative equilibria, $A_{\a,\pm \a}(t)\q$, $\a \in \R$, which are periodic orbits. 
         
          \subsection{Pair of equilateral triangle special central configuration in $\S^3$}
         Let us assume that six masses,  $m_1=m_2=m_3=m_4=m_5=m_6=:m>0$ in 
         $\S^3$ 
         form two equilateral triangles on complementary great circles:
         $\S^1_{xy}$ and $\S^1_{zw}$, with
         $$
         \q=(\q_1,\q_2,\q_3,\q_4,\q_5,\q_6), \ \ \q_i=(x_i,y_i,z_i,w_i)^T,\ \ i=1,2,3,4,5,6,
         $$
         \begin{align*}
         x_1&=0,& y_1&=1,& z_1&=0, & w_1&=0,\displaybreak[0]\\
         x_2&=\sqrt{3}/2,& y_2&=-1/2,& z_2&=0, & w_2&=0,\displaybreak[0]\\
         x_3&=-\sqrt{3}/2,& y_3&=-1/2,& z_3&=0, & w_3&=0, \displaybreak[0]\\
         x_4&=0,& y_4&=0,& z_4&=0, & w_4&=1, \displaybreak[0]\\
         x_5&=0,& y_5&=0,& z_5&=\sqrt{3}/2, & w_5&=-1/2, \displaybreak[0]\\
         x_6&=0,& y_6&=0,& z_6&=-\sqrt{3}/2, & w_6&=-1/2.
         \end{align*}
        To see that $\sum _{j=1,j\ne i}^6 \F_{ij}=\nabla_{\q_i} U=0$ for $i=1,2,3,4,5,6$, it suffices to check that for $m_1$. That is,
        \[ \F_1=\F_{12}+\F_{13}+\F_{14}+\F_{15}+\F_{16}=0.\]
By symmetry we obtain that $ \F_{12} +\F_{13}=0$. For $i=4,5,6$,  since $\q_1\cdot \q_i =0$, we have $d_{1i}= \pi/2$ and    $\F_{1i}=\frac{m^2 (\q_i-\cos d_{1i}\q_1)}{\sin ^3 d_{1i}} =m^2 \q_i$. 
        Then 
        \[ \F_{14}+\F_{15}+\F_{16}= m^2(\q_4+\q_5+\q_6)=0, \]
        hence this  is a  special central configuration. 
        Since this special central configurations is on $\S^1_{xy}\cup \S^1_{zw}$,  it gives rise to a two-parameter family of associated relative equilibria,  $A_{\a,\b}(t)\q$, $\a,\b \in \R$. They are periodic orbits if $\a/\b$ is rational, but quasi-periodic orbits if $\a/\b$ is irrational.
         
   \subsection{Lagrangian central configurations in $\S_{xyz}^2$}
  This is the example we presented in Section \ref{count}. Let us assume that three equal masses, $m_1=m_2=m_3=:m>0$,  form an equilateral configuration on $\S_{xyz}^2$, parallel with the $xy$-plane, so the coordinates are given by
   $$
   \q=(\q_1,\q_2,\q_3),\ \  \q_i=(x_i,y_i,z_i,w_i)^T,\ \ i=1,2,3,
   $$
   \begin{align*}
   x_1&= r, & y_1&= 0, & z_1&= z, & w_1&=0,\displaybreak[0]\\
   x_2&= -r/2, & y_2&= r\sqrt{3}/2, & z_2&= z, & w_2&=0,\displaybreak[0]\\
   x_3&= -r/2, & y_3&= -r\sqrt{3}/2, & z_3&= z, & w_3&=0,\displaybreak[0]
   \end{align*}
  where $r^2+z^2=1$, $r\in (-1,1)$. By symmetry, we notice that $\F_i$ is pointing towards the North or South poles and that $|\F_i|=|\F_j|$. Comparing this with the vector field $\nabla (x^2+y^2)$ on $\S_{xyz}^2$  (see Figure \ref{fig:nablaI1}), we see that the central configuration equation $\nabla_{\q_i} U=\lambda\nabla_{\q_i} I$ is satisfied for $i=1,2,3$. 
  
   To find the value of $\lambda$, we use equation \eqref{lambda+}. 
   Since the configuration is equilateral, we obtain that 
   $$
   d_{ij}=d_{jk}=:d, \ \ \cos d=1-\frac{3r^2}{2},\ \ \sin^3 d=3\sqrt{3}r^3\left(1-\frac{3r^2}{4}\right)^{3/2},
   $$
   $$
   x_1x_2+y_1y_2=x_1x_3+y_1y_3=x_2x_3+y_2y_3=-r^2/2.
   $$
   Then equation \eqref{lambda+} yields
   \begin{equation*} 
    \begin{split}
     \lambda &=  \sum_{1\le i\le N} \sum_{j=1,j\ne i}^N\frac{m_im_j(2x_ix_j+2y_iy_j-(r_i^2+r_j^2)\cos d_{ij})}{\sin^3d_{ij}}/\left( 2 \sum_{1\le i\le N} m_i r_i^2 \rho_i^2 \right)  \\
     &= \frac{3m^2(-r^2-2r^2 \cos d)}{\sin^3d \cdot 6mr^2z^2}=\frac{-3m^2r^2(3-3r^2 )}{\sin^3d \cdot 6mr^2z^2}\\
     &=\frac{3m}{2\sin^3d}= -\frac{m}{2\sqrt{3}r^3\left(1-\frac{3r^2}{4}\right)^{3/2}}<0.
    \end{split}
    \end{equation*} 
   For $r=1$, we necessarily have $z=0$, i.e.\ the central configuration is on $\S^1_{xy}$, the special central configuration discussed in the first example. 
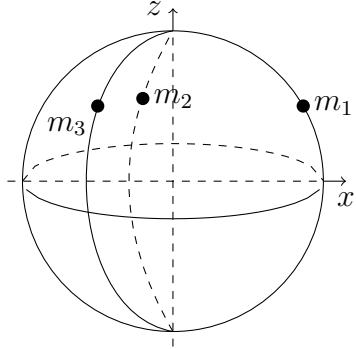
\begin{figure}[!h]
      	\centering
         \begin{tikzpicture}
         \draw (0,  0) circle (2);
         \draw [dashed] (0, -2.2)--(0, 2);
        \draw [->] (0, 2)--(0, 2.3) node (zaxis) [left] {$z$};
         \draw [dashed] ( -2.2,0)--(2,0);
        \draw [->] (2, 0)--( 2.3,0) node (yaxis) [below] {$x$};
         \draw [dashed] [domain=-2:2] plot (\x, {sqrt(4-\x*\x)/4}); 
                \draw  [domain=-1.95:1.95] plot (\x, {-sqrt(4-\x*\x)/4}); 
        
          \draw[dashed]  (0,-2) to [out=120,in=240] (0,2);
          \draw  (0,-2) to [out=170,in=190] (0,2);

         \fill (30:2) circle (2.5pt) node[right] {$m_1$};
          \fill ( -0.4,1.1) circle (2.5pt) node[ right] {$m_2$}; 
           \fill ( -1,1) circle (2.5pt) node[below left] {$m_3$};  
          
          \end{tikzpicture}
     \caption{Lagrangian  central configurations on $\S^2_{xyz}$    }
       \label{fig:lag+2}  
       \end{figure}

Each of these central configurations gives rise to a one-parameter family of associated relative equilibria, $A_{\a, \b}(t)\q$ with $\lambda=\frac{\b^2-\a^2}{2}$. 

   \subsection{Geodesic central configurations on $\S^1_{xz}$}
   Recall that we earlier defined  $\S^1_{xz}=\{(x,y,z,w)^T\in \R^4| x^2+z^2=1, y=w=0\}.$
  Let the coordinates of the three  bodies of masses $m_1=m_2=m_3=:m>0$ be given by
   $$
   \q=(\q_1,\q_2,\q_3),\ \  \q_i=(x_i,y_i,z_i,w_i)^T,\ \ i=1,2,3,
   $$
   \begin{align*}
   x_1&= 0, & y_1&= 0, & z_1&= 1, & w_1&=0,\displaybreak[0]\\
   x_2&= r, & y_2&= 0, & z_2&= z, & w_2&=0,\displaybreak[0]\\
   x_3&= -r, & y_3&= 0, & z_3&= z, & w_3&=0,\displaybreak[0]
   \end{align*}
with $r>0$, $z\in(-1,0)\cup(0,1)$ and $r^2+z^2=1$. Given the many zeroes that occur in the above coordinates, it is not difficult to check that the nine equations of Criterion \eqref{criterion++} are satisfied. We can also check the existence of this central configuration by using an argument similar to the one employed in the earlier examples, that is,  $\F_i$ and $\nabla_{\q_i}I$ are collinear and the ratios are the same for each $i=1,2,3$. By symmetry, $\F_1=0$ and $|\F_2|=|\F_3|$.  Comparing with the vector field $\nabla (x^2+y^2)$ on $\S^2_{xyz}$  (see Figure \ref{fig:nablaI1}), we see that the central configuration equation $\nabla_{\q_i} U=\lambda\nabla_{\q_i} I$ is satisfied for $i=1,2,3$.
    
To find the value of $\lambda$, we use equation \eqref{lambda+}. We have 
  $$
     d_{12}=d_{23},  \ \ r_1^2=0,\ \  r_2^2=r_3^2=r^2,\ \ 
     $$
     $$
     x_1x_2+y_1y_2=x_1x_3+y_1y_3=0, \ \ x_2x_3+y_2y_3=-r^2.
     $$
  $$\cos d_{12}=z,\ \ \sin^3 d_{12}=r^3,\ \ \cos d_{23}=z^2-r^2,\ \ \sin^3 d_{23}=8r^3|z|^3.\ \
       $$
 Then equation \eqref{lambda+} yields
   \begin{equation*} 
    \begin{split}
     \lambda
     &= \frac{1}{4mr^2z^2}\left(\frac{m^2(-r^2 \cos d_{12})}{\sin^3d_{12}} +\frac{m^2(-r^2 \cos d_{13})}{\sin^3d_{13}} +\frac{m^2(-2r^2-2r^2 \cos d_{23})}{\sin^3d_{23}}\right) \\
     &=\frac{-m}{2z^2}\left(\frac{\cos d_{12}}{\sin^3d_{12}} +\frac{1+\cos d_{23}}{\sin^3d_{23}}\right)=\frac{-m}{2r^3}\left(\frac{1}{z} +\frac{1}{4|z|^3}\right).
    \end{split}
    \end{equation*} 
It is easy to see that $\lambda <0$ for $z\in ( -1/2,0)\cup (0,1)$,  $\lambda >0$ for  $z\in (-1,-1/2)$,  and $\lambda =0$ for  $z= -1/2$, which shows the connection with the special central configuration discussed in the first example. 
   
   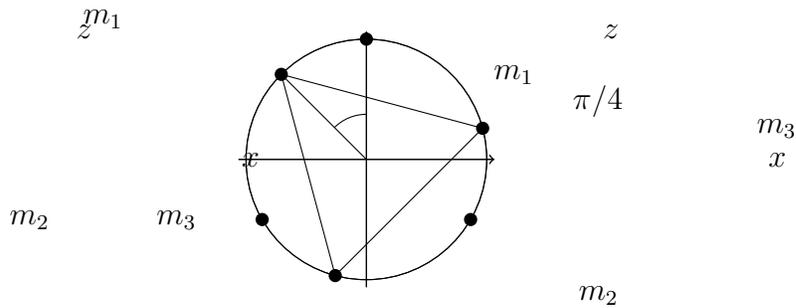
\begin{figure}[!h]
         	\centering
           \begin{tikzpicture}
            \vspace*{0cm}\hspace*{-3.5cm}
           \draw (0,  0) circle (1.6);
           
\fill (90:1.6) circle (2.5pt) node[above] {$m_1$};          
\fill (330:1.6) circle (2.5pt) node [left]   {$m_3$};
\fill (210:1.6) circle (2.5pt) node[right] {$m_2$};

            \draw [->] (0, -1.7)--(0, 1.7) node (zaxis) [left] {$z$};
                  \draw [->](-1.7, 0)--(1.7, 0) node (yaxis) [right] {$x$};
                  \vspace*{0cm}\hspace*{7cm}
       \draw (0,  0) circle (1.6);
                  
                  \draw[-] (135:1.6) -- (255:1.6);
                  \draw[-] (255:1.6) -- (15:1.6);
                  \draw[-] (15:1.6) -- (135:1.6);
                  \draw[-] (0,0) -- (135:1.6);
                  \draw[-] (90:.6)arc(90:135:.6) node [above]   {$\pi/4$};

 \fill (135:1.6) circle (2.5pt) node[left] {$m_1$};          
 \fill (255:1.6) circle (2.5pt) node [below]   {$m_2$};
 \fill (15:1.6) circle (2.5pt) node[right] {$m_3$};
                  
                   \draw [->] (0, -1.7)--(0, 1.7) node (zaxis) [left] {$z$};
                         \draw [->](-1.7, 0)--(1.7, 0) node (yaxis) [right] {$x$};

           \end{tikzpicture}
           \caption{Geodesic central configurations on $\S^1_{xz}$}
           \label{fig:euler+}
           \end{figure}
 
 All ordinary geodesic central configurations of three masses on $\S^1_{xz}$ were found in \cite{Zhu}. Some  interesting  examples were given there, such as the one in which three distinct masses form an equilateral triangle. For instance, take the masses
   $m_1=2, m_2=1, m_3=3$ on $\S^1_{xz}$ with configuration
   $$
   \q=(\q_1,\q_2,\q_3),\ \  \q_i=(x_i,y_i,z_i,w_i)^T,\ \ i=1,2,3,
   $$
   \begin{align*}
   x_1&= -\sin\frac{\pi}{4}, & y_1&= 0, & z_1&=\cos\frac{\pi}{4}, & w_1&=0,\displaybreak[0]\\
   x_2&=-\sin(11\pi/12), & y_2&= 0, & z_2&=\cos(11\pi/12), & w_2&=0,\displaybreak[0]\\
   x_3&=-\sin(19\pi/12), & y_3&= 0, & z_3&=\cos(19\pi/12), & w_3&=0.
   \end{align*}
 We could also verify that the equations of Criterion \ref{criterion++} are satisfied and 
   $$
   \lambda=-\frac{8}{3}.$$
We can actually find many such examples. For any three masses, if there are $\lambda\ne 0$ and $\theta$ such that
   $$
   \sin 2\theta= -\frac{4}{3\lambda}(m_3-m_2), \ \  \cos 2\theta= \frac{4\sqrt{3}}{9\lambda} (2m_1-m_3-m_2),
   $$
   then the configuration
   $$
   \q=(\q_1,\q_2,\q_3),\ \  \q_i=(x_i,y_i,z_i,w_i)^T,\ \ i=1,2,3,
   $$
   \begin{align*}
   x_1&= -\sin\theta, & y_1&= 0, & z_1&=\cos\theta, & w_1&=0,\displaybreak[0]\\
   x_2&=-\sin(\theta+2\pi/3), & y_2&= 0, & z_2&=\cos(\theta+2\pi/3), & w_2&=0,\displaybreak[0]\\
   x_3&=-\sin(\theta+4\pi/3), & y_3&= 0, & z_3&=\cos(\theta+4\pi/3), & w_3&=0,
   \end{align*}
   is always a central configuration. 
   
      \subsection{Isosceles central configuration in $\S^2_{xyz}$}
      Let us assume that three masses, $m_1=-2\cos\varphi$, with $\varphi\in(\pi/2,\pi)$, $m_2=m_3=1$,  form an isosceles triangle on the sphere $\S^2_{xyz}$, parallel with the $xy$-plane,  and are given by the coordinates 
      $$
      \q=(\q_1,\q_2,\q_3), \ \q_i=(x_i,y_i,z_i,w_i)^T,\ i=1,2,3,
      $$
      \begin{align*}
      x_1&=\sin\theta, & y_1&=0, & z_1&=\cos\theta, & w_1=0,\displaybreak[0]\\
      x_2&=\sin\theta\cos\varphi, & y_2&=\sin\theta\sin\varphi, & z_2&=\cos\theta, & w_2=0,\displaybreak[0]\\
      x_3&=\sin\theta\cos\varphi, & y_3&=-\sin\theta\sin\varphi, & z_3&=\cos\theta, & w_3=0,
      \end{align*}
      with $\theta$ chosen such that
      $$
      \cos^2\theta=1+\frac{2}{(\cos\varphi-1)(2\cos\varphi+3)}.
      $$
      
       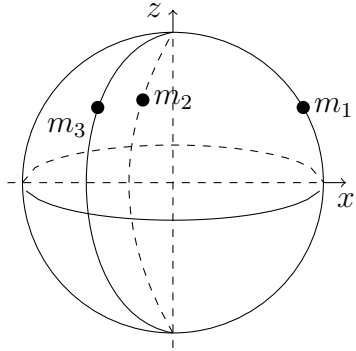
\begin{figure}[!h]
                  	\centering
                     \begin{tikzpicture}
                     \draw (0,  0) circle (2);
                     \draw [dashed] (0, -2.2)--(0, 2);
                    \draw [->] (0, 2)--(0, 2.3) node (zaxis) [left] {$z$};
                     \draw [dashed] ( -2.2,0)--(2,0);
                    \draw [->] (2, 0)--( 2.3,0) node (yaxis) [below] {$x$};
      \draw [dashed] [domain=-2:2] plot (\x, {sqrt(4-\x*\x)/4}); 
                            \draw  [domain=-1.95:1.95] plot (\x, {-sqrt(4-\x*\x)/4});

                      \draw[dashed]  (0,-2) to [out=120,in=240] (0,2);
                      \draw  (0,-2) to [out=170,in=190] (0,2);

                     \fill (30:2) circle (2.5pt) node[right] {$m_1$};
                      \fill ( -0.4,1.1) circle (2.5pt) node[ right] {$m_2$}; 
                       \fill ( -1,1) circle (2.5pt) node[below left] {$m_3$};  
                      
                      \end{tikzpicture}
                 \caption{Isosceles central configuration on $\S^2_{xyz}$    }
                   \label{fig:iso}  
                   \end{figure}
                   
       By straightforward computations, we can  see that the equations of Criterion \ref{criterion++} are  satisfied, and with equation \eqref{lambda+} we obtain 
      $$
      \lambda=-\frac{2-2\cos \varphi}{2\sin^3d_{12}}=-\frac{2-2\cos \varphi}{2\sin^3\th (1-\cos \varphi)^{3/2}(1+\sin^2\th \cos \varphi +\cos ^2\th)^{3/2}}.
      $$
      The existence of the associated relative equilibria was proved in \cite{Diacu78}. Interesting details concerning this type of central configuration will be given in a future paper.

   \subsection{Lagrangian central configurations in $\H_{xyw}^2$}
   Recall that we earlier defined
   $ \H^2_{xyw}=\{(x,y,z,w)^T\in \R^4| x^2+y^2-w^2=-1, z=0\}.$
    Let us assume that three equal masses, $m_1=m_2=m_3=:m>0$,  form an equilateral configuration in $\H^2_{xyw}$, parallel with the $xy$-plane, and the coordinates are given by
      $$
      \q=(\q_1,\q_2,\q_3),\ \  \q_i=(x_i,y_i,z_i,w_i)^T,\ \ i=1,2,3,
      $$
      \begin{align*}
        x_1&= r, & y_1&= 0, & z_1&=0, & w_1&= w,\displaybreak[0]\\
        x_2&= -r/2, & y_2&= r\sqrt{3}/2, & z_2&=0, & w_2&= w,\displaybreak[0]\\
        x_3&= -r/2, & y_3&= -r\sqrt{3}/2, & z_3&=0, & w_3&= w,\displaybreak[0]
        \end{align*}
     where $r^2-w^2=-1$, $w\in (1,+\infty)$. By symmetry, we notice that $\F_i$ is pointing towards $(0,0,0,1)$ and $|\F_i|=|\F_j|$. Comparing with the vector field $\nabla (x^2+y^2)$ on $\H^2_{xyw}$   (see Figure \ref{fig:nablaI2}), we see that the central configuration equation $\nabla_{\q_i} U=\lambda\nabla_{\q_i} I$ is satisfied for $i=1,2,3$.
     
To compute the value of $\lambda$, we use equation \eqref{lambda-}. 
      Since the configuration is equilateral, we obtain that 
      $$
      d_{ij}=d_{jk}=:d, \ \ \cosh d=1+\frac{3r^2}{2},\ \ \sinh^3 d=3\sqrt{3}r^3\left(1+\frac{3r^2}{4}\right)^{3/2},
      $$
      $$
      x_1x_2+y_1y_2=x_1x_3+y_1y_3=x_2x_3+y_2y_3=-r^2/2.
      $$
      Then equation \eqref{lambda-} yields
      \begin{equation*} 
       \begin{split}
        \lambda &=  \sum_{1\le i\le N} \sum_{j=1,j\ne i}^N\frac{m_im_j(2x_ix_j+2y_iy_j-(r_i^2+r_j^2)\cosh d_{ij})}{\sinh^3d_{ij}}/\left( 2 \sum_{1\le i\le N} m_i r_i^2 \rho_i^2 \right)  \\
        &= \frac{3m^2(-r^2-2r^2 \cosh d)}{\sinh^3d \cdot 6mr^2w^2}=\frac{-3m^2r^2(3+3r^2 )}{\sinh^3d \cdot 6mr^2w^2}\\
        &=\frac{3m}{2\sinh^3d}= -\frac{m}{2\sqrt{3}r^3\left(1+\frac{3r^2}{4}\right)^{3/2}}.
       \end{split}
       \end{equation*} 
Each of these central configurations gives rise to one-parameter family of associated relative equilibria: $B_{\a, \b}(t)\q$ with $\lambda=-\frac{\b^2+\a^2}{2}$. These orbits are a new discovery that has been missed in previous studies, a fact that shows the power of the central-configuration method for finding relative equilibria.  

   Although we build the whole theory of negative-curvature spaces on the hyperbolic-sphere model $\H^3$, it is convenient to visualize the associated  relative equilibria in the Poincar\'{e} ball model. 
    Recall that the Poincar\'e ball model is given by
 \[ \left(  \bar{x}^2+\bar{y}^2+\bar{z}^2<1, \ ds^2=\frac{4(d\bar{x}^2+d\bar{y}^2+d\bar{z}^2)}{1-(\bar{x}^2+\bar{y}^2+\bar{z}^2)}  \right),  \] 
 which can be seen as the perspective projection of the upper 3-dimensional hyperboloid viewed from $(0,0,0,-1)$. The projection mapping is 
 \[ \bar{x}=\frac{x}{1+w}, \ \ \bar{y}=\frac{y}{1+w}, \ \ \bar{z}=\frac{z}{1+w}.  \ \ \] 
This projection mapping shows that the isometries  of the  $SO(2)$  rotations in the $xy$-plane  become the  rotations in the $\bar{x}\bar{y}$-plane, and that the isometries of the  $SO(1,1)$  rotations in the $zw$-plane  become action moving points from $(0,0,-1)$ to $(0,0,1)$ or in the opposite direction.  
Thus the relative equilibria $B_{\a,\b}(t)\q$ in the Poincar\'e ball model can be viewed as bodies that rotate around the $\bar{z}$-axis and move up or down along the projection of the hyperbolic cylinder
   $$
   {\bf C}_{r\rho}:=\{(x,y,z,w)^T\in \H^3\ \! | \ \! x^2+y^2=r^2 \},
   $$
a spindle-shaped surface (within the framework of this model) for which the hyperbolic distance from the $\bar z$-axis is constant (see Figure \ref{fig:Lag-} on the right),
hence the name ``hyperbolic cylinder'' we gave to it in previous studies on relative equilibria, \cite{Diacu03}, \cite{Diacu05}. 
    
   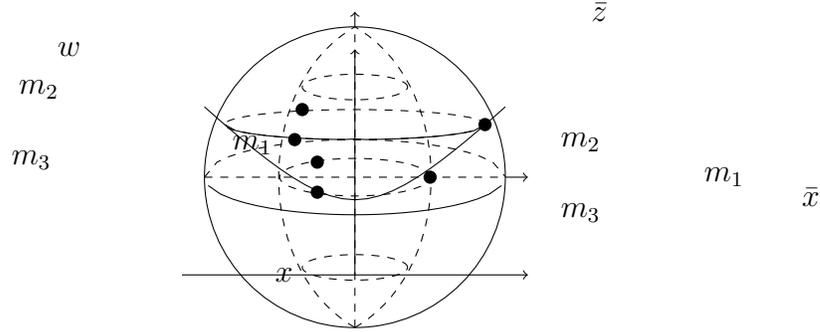
\begin{figure}
                 \centering
                \begin{tikzpicture}
                \vspace*{0cm}\hspace*{-3.5cm}
                \draw[black, domain=-2:2] plot (\x, {sqrt(\x*\x+1) });
                 \draw [dashed] (0,2) ellipse (1.73  and .2);  
                  \draw [domain=-1.73:1.73] plot (\x, {2-0.2*sqrt(1.73*1.73-\x*\x)/1.73});  
                
                \fill (-0.7,2.2) circle (2.5pt) node[above] {$m_2$};
                 \fill (-0.8,1.8) circle (2.5pt) node[below] {$m_3$};
               \fill (1.73,2) circle (2.5pt) node[below right] {$m_1$};

                 \draw [->] (0, 0)--(0, 3) node (zaxis) [left] {$w$};
                       \draw [->](-2.3, 0)--(2.3, 0) node (yaxis) [right] {$x$};
    \vspace*{0cm}\hspace*{7cm}
                    
     \draw (0,  1.3) circle (2);
        \draw [dashed] (0, -0.7)--(0, 3.3);
    \draw [dashed] (-2, 1.3)--(2, 1.3); 
       \draw [->] (0, 3.3)--(0, 3.5) node (zaxis) [left] {$\bar{z}$};
         \draw [->] (2, 1.3)--(2.3, 1.3) node (zaxis) [below right] {$\bar{x}$};
         
     \draw [dashed][domain=-2:2] plot (\x, {1.3+sqrt(4-\x*\x)/4});   
     \draw [domain=-1.95:1.95] plot (\x, {1.3-sqrt(4-\x*\x)/4}); 
    \draw [dashed] (0,1.3) ellipse (1  and .25);

      \draw[dashed]  (0,-0.7) to [out=150,in=210] (0,3.3);                         
     \draw[dashed]  (0,-0.7) to [out=30,in=330] (0,3.3);                            
      \draw [dashed] (0,2.5) ellipse (0.7  and .17);
       \draw [dashed] (0,0.1) ellipse (0.7  and .17);                           
    
     \fill (1,1.3) circle (2.5pt) node[right] {$m_1$};
      \fill (-0.5,1.5) circle (2.5pt) node[ above] {$m_2$}; 
       \fill (-0.5,1.1) circle (2.5pt) node[below] {$m_3$};                    
                \end{tikzpicture}
                \caption{ Lagrangian central configurations on $\H^2_{xyw}$ and the associated relative equilibria in the Poincar\'e ball.} \label{fig:Lag-}
                  \end{figure}

   \subsection{Geodesic central configurations in $\H^1_{xw}$}
    Recall that we earlier defined
    $ \H^1_{xw}=\{(x,y,z,w)^T\in \R^4| x^2-w^2=-1, y=z=0\}.$
     Let three  bodies of masses $m_1=m_2=m_3=:m>0$ have the coordinates
      $$
      \q=(\q_1,\q_2,\q_3),\ \  \q_i=(x_i,y_i,z_i,w_i)^T,\ \ i=1,2,3,
      $$
      \begin{align*}
      x_1&= 0, & y_1&= 0, & z_1&= 0, & w_1&=1,\displaybreak[0]\\
      x_2&= r, & y_2&= 0, & z_2&= 0, & w_2&=w,\displaybreak[0]\\
      x_3&= -r, & y_3&= 0, & z_3&= 0, & w_3&=w,\displaybreak[0]
      \end{align*}
      with $r>0$ and $r^2-w^2=-1$. Given the many zeroes that occur above, it is not difficult to check that system \eqref{criterion--} is satisfied.  We can also check the existence of this central configuration by using the argument employed in the earlier examples, that is,  $\F_i$ and $\nabla_{\q_i}I$ are collinear and the ratios are the same for each $i=1,2,3$. By symmetry, $\F_1=0$ and $|\F_2|=|\F_3|$.  Comparing with the vector field $\nabla (x^2+y^2)$ on $\H^2_{xyw}$  (see Figure \ref{fig:nablaI2}), we see that the central configuration equation $\nabla_{\q_i} U=\lambda\nabla_{\q_i} I$ is satisfied for $i=1,2,3$.
       
To compute the value of $\lambda$, we use equation \eqref{lambda-} and the relationships 
     $$
        d_{12}=d_{23},  \ \ r_1^2=0,\ \  r_2^2=r_3^2=r^2,\ \ 
        $$
        $$
        x_1x_2+y_1y_2=x_1x_3+y_1y_3=0, \ \ x_2x_3+y_2y_3=-r^2.
        $$
     $$\cosh d_{12}=w,\ \ \sin^3 d_{12}=r^3,\ \ \cosh d_{23}=w^2+r^2,\ \ \sinh ^3 d_{23}=8r^3w^3,\ \
          $$
which yield
      \begin{equation*} 
       \begin{split}
        \lambda
        &= \frac{1}{4mr^2z^2}\left[\frac{m^2(-r^2 \cosh d_{12})}{\sinh ^3d_{12}} +\frac{m^2(-r^2 \cosh d_{13})}{\sinh ^3d_{13}} +\frac{m^2(-2r^2-2r^2 \cosh d_{23})}{\sinh ^3d_{23}}\right] \\
        &=-\frac{m}{2w^2}\left(\frac{\cosh d_{12}}{\sinh ^3d_{12}} +\frac{1+\cosh d_{23}}{\sinh ^3d_{23}}\right)=-\frac{m}{2r^3}\left(\frac{1}{w} +\frac{1}{4w^3}\right).
       \end{split}
       \end{equation*} 
    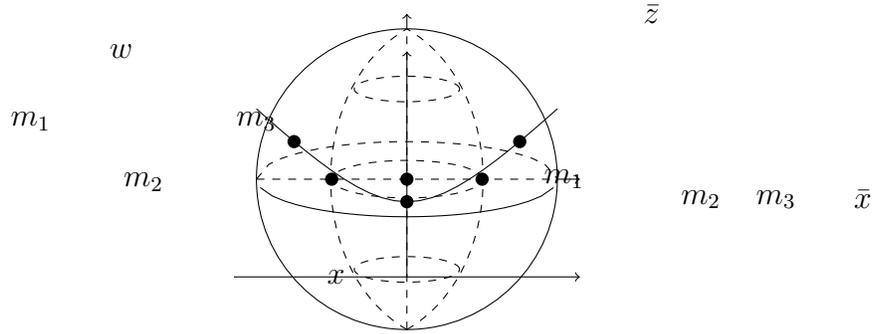
\begin{figure}
         \centering
        \begin{tikzpicture}
        
         \vspace*{0cm}\hspace*{-3.5cm}
        \draw[black, domain=-2:2] plot (\x, {sqrt(\x*\x+1) });

        \fill (1.5,1.8) circle (2.5pt) node[above] {$m_3$};
         \fill (0,1) circle (2.5pt) node[above] {$m_2$};
       \fill (-1.5,1.8) circle (2.5pt) node[above] {$m_1$};

         \draw [->] (0, 0)--(0, 3) node (zaxis) [left] {$w$};
               \draw [->](-2.3, 0)--(2.3, 0) node (yaxis) [right] {$x$};
                \vspace*{0cm}\hspace*{7cm}
                
 \draw (0,  1.3) circle (2);
    \draw [dashed] (0, -0.7)--(0, 3.3);
\draw [dashed] (-2, 1.3)--(2, 1.3); 
 \draw [->] (0, 3.3)--(0, 3.5) node (zaxis) [left] {$\bar{z}$};
          \draw [->] (2, 1.3)--(2.3, 1.3) node (zaxis) [below right] {$\bar{x}$};
            
 \draw [dashed][domain=-2:2] plot (\x, {1.3+sqrt(4-\x*\x)/4});   
      \draw [domain=-1.95:1.95] plot (\x, {1.3-sqrt(4-\x*\x)/4}); 
     \draw [dashed] (0,1.3) ellipse (1  and .25); 

\draw [dashed] (0,2.5) ellipse (0.7  and .17);
\draw [dashed] (0,0.1) ellipse (0.7  and .17);
  \draw[dashed]  (0,-0.7) to [out=150,in=210] (0,3.3);                         
 \draw[dashed]  (0,-0.7) to [out=30,in=330] (0,3.3);

 \fill (-1,1.3) circle (2.5pt) node[left] {$m_1$};
  \fill (0,1.3) circle (2.5pt) node[ below right] {$m_2$}; 
   \fill (1,1.3) circle (2.5pt) node[below right] {$m_3$};        
 \end{tikzpicture}
        \caption{ Geodesic central configurations on $\H^1_{xw}$ and the associated relative equilibria in the Poincar\'e ball} \label{fig:euler-}
          \end{figure} 
As in the last example, we can also represent the associated relative equilibria in the Poincar\'e ball model, see Figure \ref{fig:euler-}, where the bodies rotate around the $\bar{z}$-axis and  move up or down, one along the $\bar z$-axis, and the other two along the projection of the hyperbolic cylinder ${\bf C}_{r\rho}$, thus maintaining constant mutual distances.

   \section{Moulton's theorem}  \label{Moulton}    
 In 1910, Forest Ray Moulton sought to extend Euler's results about the collinear central configurations in the Newtonian $N$-body problem to any number $N$ of point masses. He showed that for a given ordering of the bodies on a straight line, there is exactly one class of central configurations, \cite{Moulton}. In this section we are asking whether Moulton's theorem has a natural correspondent in spaces of nonzero constant curvature. As we will further prove, this extension is true on geodesics of $\H^3$, but not on geodesics of $\S^3$, where even the case $N=2$ leads to a complicated count. 
 
Before we get to the curved $N$-body  problem, let us make some comments about the Euclidean case. The class of central configurations in the above statement of the theorem is meant as the set of central configurations factorized to homotheties. So another equivalent way of stating Moulton's result is to say that, for every ordering of any given masses  with $I(\q)=$ constant, there is exactly one central configuration. This new formulation is the one we adopt here, since the value of $I(\q)$ could never be the same for central configurations with different sizes,
as Definition \ref{equi-central configuration} implies.

 \subsection{Geodesic  central configurations in  $\H^3$}

Theorem \ref{1d_CC} states that every geodesic central configuration in $\H^3$ is equivalent to some geodesic central configuration on $\H^1_{xw}$. 
 Thus we  assume that the point masses $m_1,\dots,m_N$ lie on $\H^1_{xw}$. Expressing the position of each mass $m_i$ in terms of the oriented hyperbolic distance $\th_i\in\R, \ i=\overline{1,N}$, measured from the vertex, $(0,0, 0,1)$,  we can represent the position vectors and the distances between bodies as
 $$
 \q_i=(\sinh\theta_i, 0, 0, \cosh\theta_i),\ \ d_{ij}=|\theta_i-\theta_j|, 
 \ \ i,j=\overline{1,N},
 $$ 
 respectively. Then the force function and the moment of inertia can be written as
 $$
 U(\q)=\sum_{1\le i<j\le N}m_im_j\coth d_{ij}\ \ {\rm and}\ \
 I(\q)=\sum_{i=1}^Nm_i\sinh^2\theta_i.
 $$

 By the critical point characterization of central configurations introduced  in Section \ref{criticalpoint},  we only need to find the number of critical points of $\hat{U}$ on $\hat{S_c}$ for a constant $c>0$. In this case, we have 
 $$
 \hat{S_c}= S_c/ SO(2)\times SO(1,1)=S_c=\{\q\in(\H^1_{xw})^N\setminus\Delta\ \! |\ \! I(\q)=c\},\ \ \hat{U}=U,
  $$
 where $\Delta$ denotes the collision set. Equivalently,  we only need to find the number of critical points of $U -\lambda I$ in $(\H^1_{xw})^N\setminus\Delta$, where $\lambda$ is fixed. 
We can now state and prove the following result.
 \begin{theorem}
 For any given  point masses $m_1,\dots,m_N>0$ in  $\H^3$ and  each $c>0$, there are exactly $N!/2$ geodesic central configurations with $I(\q)=c$, one for each ordering of the masses on the geodesic.
 \end{theorem}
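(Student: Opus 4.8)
The plan is to reduce the count to a convex optimization on each "ordering cone" in the parameter space $\R^N$ of the oriented distances $\theta_1,\dots,\theta_N$, exploiting that although $\H^3$ admits no homotheties, $U$ and $I$ have exactly the right convexity in these coordinates. By Theorem \ref{1d_CC} every geodesic central configuration in $\H^3$ is equivalent to one on $\H^1_{xw}$, so I work on $(\H^1_{xw})^N\setminus\Delta$ with $\q_i=(\sinh\theta_i,0,0,\cosh\theta_i)$, $d_{ij}=|\theta_i-\theta_j|$, $U(\theta)=\sum_{i<j}m_im_j\coth|\theta_i-\theta_j|$, and $I(\theta)=\sum_i m_i\sinh^2\theta_i$. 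Nonsingularity forces the $\theta_i$ to be distinct, so $\R^N\setminus\Delta$ is the disjoint union of the $N!$ open cones $C_\sigma=\{\theta_{\sigma(1)}<\dots<\theta_{\sigma(N)}\}$, and each geodesic central configuration lies in exactly one $C_\sigma$. By the critical-point description of Section \ref{criticalpoint} (the quotient by $SO(2)\times SO(1,1)$ being trivial here), such a configuration with $I=c$ is a critical point of $U$ on $S_c=\{I=c\}$, i.e.\ for some multiplier $\lambda$ a critical point of $F_\lambda:=U-\lambda I$ on $C_\sigma$, and by \eqref{lambda-} necessarily $\lambda<0$.

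The key observation is that $F_\lambda$ is strictly convex on $C_\sigma$ for every $\lambda<0$: the function $\coth$ is convex and decreasing on $(0,\infty)$ (its second derivative is $2\cosh/\sinh^3>0$), so on $C_\sigma$ each term $\coth|\theta_i-\theta_j|$ is $\coth$ of a positive affine function of $\theta$, hence convex, making $U$ convex; and $I$ has Hessian $\mathrm{diag}(2m_i\cosh 2\theta_i)$, which is positive definite, so $-\lambda I=|\lambda|I$ is strictly convex. Hence $F_\lambda$ has at most one critical point in $C_\sigma$. It is also coercive there: $U\to+\infty$ at the collision boundary $\partial C_\sigma$, $|\lambda|I\to+\infty$ as $\|\theta\|\to\infty$ in $C_\sigma$, and $U\ge\sum_{i<j}m_im_j>0$; thus the sublevel sets of $F_\lambda$ are compact subsets of $C_\sigma$, $F_\lambda$ attains its minimum at a unique interior critical point $\theta_\sigma(\lambda)\in C_\sigma$, and this is the only critical point of $F_\lambda$ in $C_\sigma$. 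Consequently the geodesic central configurations in $C_\sigma$ with $I=c$ are precisely the points $\theta_\sigma(\lambda)$, $\lambda<0$, for which $I(\theta_\sigma(\lambda))=c$.

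It remains to show that $\lambda\mapsto I(\theta_\sigma(\lambda))$ is continuous and strictly increasing on $(-\infty,0)$ with range $(0,\infty)$, so that the equation $I(\theta_\sigma(\lambda))=c$ has exactly one solution. Strict monotonicity follows from the standard comparison trick: adding the two minimality inequalities for $\theta_\sigma(\lambda_1),\theta_\sigma(\lambda_2)$ gives $(|\lambda_1|-|\lambda_2|)\bigl(I(\theta_\sigma(\lambda_1))-I(\theta_\sigma(\lambda_2))\bigr)\le 0$, and equality would force $\theta_\sigma(\lambda_1)=\theta_\sigma(\lambda_2)$, hence $\nabla I=0$ there, which is impossible in $C_\sigma$ since $\nabla I=(m_i\sinh 2\theta_i)_i$ vanishes only at the total collision. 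For the endpoints: as $\lambda\to 0^-$, comparing $F_\lambda(\theta_\sigma(\lambda))$ with $F_\lambda(\bar\theta)$ for fixed $\bar\theta\in C_\sigma$ yields $\limsup U(\theta_\sigma(\lambda))\le\inf_{C_\sigma}U=\sum_{i<j}m_im_j$, a value never attained on $C_\sigma$ because each $\coth d_{ij}>1$; if $I(\theta_\sigma(\lambda))$ stayed bounded, $\theta_\sigma(\lambda)$ would subconverge in $\bar C_\sigma$ either to a collision (where $U\to\infty$) or to a noncollision $\theta^\ast\in C_\sigma$ with $U(\theta^\ast)\le\sum m_im_j$, both impossible, so $I(\theta_\sigma(\lambda))\to\infty$. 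As $\lambda\to-\infty$, testing $F_\lambda$ on a contracted configuration $\theta^\delta=(\delta a_{\sigma(1)},\dots,\delta a_{\sigma(N)})$ with fixed reals $a_1<\dots<a_N$ gives $\inf_{C_\sigma}F_\lambda\le K_1\delta^{-1}+K_2|\lambda|\delta^2$ for small $\delta$; choosing $\delta=|\lambda|^{-1/3}$ yields $|\lambda|I(\theta_\sigma(\lambda))\le F_\lambda(\theta_\sigma(\lambda))=O(|\lambda|^{1/3})$, so $I(\theta_\sigma(\lambda))\to 0$. Hence each cone $C_\sigma$ carries exactly one geodesic central configuration with $I=c$.

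Finally, the isometry $\theta\mapsto-\theta$ of $\H^1_{xw}$ is realized by the rotation by $\pi$ in the $xy$-plane, an element of $SO(2)\times SO(1,1)$, and it is the only nontrivial element of that group preserving $\H^1_{xw}$; it maps $C_\sigma$ onto the cone of the reversed ordering and identifies their central configurations. Therefore the $N!$ configurations just produced fall into exactly $N!/2$ equivalence classes, one for each ordering of the masses up to reversal, which is the assertion. The main obstacle is precisely the absence of scaling in $\H^3$: it prevents the homogeneity reduction used in the Euclidean Moulton theorem and forces the surjectivity endpoint $I(\theta_\sigma(\lambda))\to 0$ to be obtained by an explicit trial configuration together with quantitative estimates on $\coth$ and $\sinh$ near $0$.
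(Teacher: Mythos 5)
Your argument is correct, and it reaches the conclusion by a genuinely different route than the paper. The paper fixes $c$ from the start: it shows $S_c$ splits into $N!$ open $(N-1)$-disks (one per ordering), produces a critical point of $U|_{S_c}$ on each by a boundary/compactness argument, verifies that any such critical point is a local minimum because $D^2U-\lambda D^2I$ is positive definite there for the multiplier $\lambda<0$ of that particular point, and then excludes a second minimum on the same disk by a mountain-pass argument. You instead fix the multiplier: the same Hessian computation, read globally, says that $U-\lambda I$ is strictly convex on each ordering cone for every $\lambda<0$ ($\coth$ composed with an affine function plus the positive-definite $|\lambda|I$), so it has a unique critical point $\theta_\sigma(\lambda)$, and the prescribed size $I=c$ is then recovered by showing $\lambda\mapsto I(\theta_\sigma(\lambda))$ is a strictly monotone bijection of $(-\infty,0)$ onto $(0,\infty)$. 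What your version buys is the elimination of the min-max step (uniqueness is immediate from global convexity) and a transparent explanation of why the spherical analogue fails ($\cot$ is not convex and $-\lambda I$ need not help); the price is the shooting analysis in $\lambda$: the comparison inequality for strict monotonicity, the two endpoint limits (the $\lambda\to-\infty$ one needing your explicit contracted trial configuration), and the continuity of $\lambda\mapsto\theta_\sigma(\lambda)$, which you assert but do not prove --- it does follow from the implicit function theorem since the Hessian of $U-\lambda I$ at $\theta_\sigma(\lambda)$ is invertible, but you should say so. Your closing identification of the reversal $\theta\mapsto-\theta$ with the $\pi$-rotation in the $xy$-plane, and the check that it is the only nontrivial element of $SO(2)\times SO(1,1)$ preserving $\H^1_{xw}$, matches the paper's final halving of the count and is slightly more careful than the paper's one-line remark.
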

 \begin{proof}
 We will follow the idea used to prove the classical theorem of Moulton, \cite{Abraham}, \cite{Moeckel2}, and show first that the manifold $S_c$ contains $N!$ components, each homemorphic to an $(N-1)$-dimensional disk. We will then prove that the critical points of $\hat{U}$, or equivalently, of $U$,  are local minima on these disks, and finally show that there is just one minimum on each such disk. 
 
To prove that each ordering corresponds to an $(N-1)$-dimensional open disk, it suffices to consider one of the orderings, $\th_1<\cdots<\th_N$. Denote the corresponding component by $S_c'$ (see Figure \ref{fig:geo-central configuration-h}). Consider the homemorphism $\phi: (\H^1_{xw})^N \to \R^N$, $\phi(\th_1, \cdots, \th_N)=(x_1, \cdots, x_N)$, where $x_i=\sinh \th_i$.  Then $S_c'$ is homemorphic to 
 \[ \{(x_1,\dots,x_N)\in\R^N\ | \  x_1<\cdots <x_N, \ \ \sum_{i=1}^N m_ix_i^2=c\}, \]  
 which is an  $(N-1)$-dimensional open disk, \cite{Moeckel2}. Thus the set $S_c$ has exactly $N!$ components, each homemorphic to an $(N-1)$-dimensional open  disk. By an argument similar to the one in the proof of Theorem \ref{existence}, we can establish the existence of a critical point, or a central configuration, on each component.
\begin{figure}
   \centering
  \begin{tikzpicture}
  \draw[black, domain=-2:2] plot (\x, {sqrt(\x*\x+1) });

  \fill (.5,1.1) circle (2.5pt) node[above] {$m_N$};
   \fill (-1,1.4) circle (2.5pt) node[above] {$m_2$};
 \fill (-1.5,1.8) circle (2.5pt) node[above] {$m_1$};
 \fill (0,1) circle (2.5pt); 
 \fill (-.5,1.1) circle (2.5pt);
 
 \fill (-1.5,0) circle (1.4pt) node[above] {$\sinh \th_1$};
 \fill (-1,0) circle (1.4pt) node[below] {$\sinh \th_2$};
 \fill (.5,0) circle (1.4pt) node[above] {$\sinh \th_N$};
 
   \draw [->] (0, 0)--(0, 3) node (zaxis) [left] {$w$};
         \draw [->](-3, 0)--(3, 0) node (yaxis) [right] {$x$};
  \end{tikzpicture}
  \caption{ A configuration of $N$-masses on $\H^1_{xw}$} \label{fig:geo-central configuration-h}
    \end{figure}
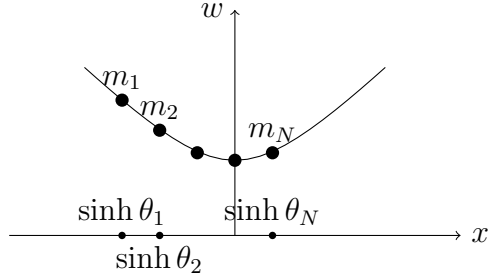 
 
 Denote such a critical point by $\q'$. We will show that  $\q'$ must be a local  minimum  of  $U$ in $S_c$.  For this, we  first prove that $\q'$  is a  local minimum  of 
 $U(\q)-\lambda I(\q)$ in $(\H_{xw}^1)^N\setminus \Delta$, where $\lambda=\lambda(\q')<0$ is a constant determined by equation \eqref{lambda-}.  To reach this
   goal, we   compute the Hessian of $U(\q)-\lambda I(\q)$ and show that it is positive definite. By straightforward computations, we obtain 
   \begin{align*}
  {\rm Hess}_\q=&\ \!D^2 U(\q) -\lambda D^2I(\q)\\ 
  =& 2\begin{bmatrix}
    \sum\limits_{\substack{j=1, j\ne 1}}^N \frac{m_1m_j\cosh d_{1j}}{\sinh^3d_{1j}} & -\frac{m_1m_2\cosh d_{12}}{\sinh^3d_{12}}& \cdots& -\frac{m_1m_N\cosh d_{1N}}{\sinh^3d_{1N}}\\
    -\frac{m_2m_1\cosh d_{12}}{\sinh^3d_{12}}&  \sum\limits_{\substack{j=1,j\ne 2}}^N \frac{m_2m_j\cosh d_{2j}}{\sinh^3d_{2j}}&\cdots& -\frac{m_2m_N\cosh d_{2N}}{\sinh^3d_{2N}}\\
    \cdots&  \cdots& \cdots& \cdots\displaybreak[0]\\
    -\frac{m_1m_N\cosh d_{1N}}{\sinh^3d_{1N}}&\cdots&\cdots&\sum\limits_{\substack{j=1, j\ne N}}^N \frac{m_Nm_j\cosh d_{Nj}}{\sinh^3d_{Nj}}
    \end{bmatrix}\\
  &-2\lambda \begin{bmatrix}
    m_1 \cosh 2\theta_1&0&\cdots&0\\
    0&m_2 \cosh 2\theta_2&\cdots&0\\
    \cdots&  \cdots& \cdots& \cdots\\
    0&\cdots &\cdots &m_N \cosh 2\theta_N\
   \end{bmatrix}.
   \end{align*}
  Notice first that $-\lambda D^2I(\q)$,  the second term in ${\rm Hess}_\q$, is positive definite. Indeed,  the matrix $D^2I(\q)$ is obviously positive definite, and the coefficient $-\lambda$ is positive from equation \eqref{lambda-}. 
  
For the  first term, $D^2U$,  let us  take any nonzero vector 
  $\v=(v_1,\cdots, v_N)^T\in T_{\q'}\left((\H_{xw}^1)^N\setminus \Delta\right)$.  Regarding $D^2U$ as a bilinear form,  we get 
   \begin{align*}
 \v^T (D^2U)\v=\sum_{i=1}^N\sum_{j=1}^N(D^2 U)_{ij} v_iv_j=\ & 2\sum_{i=1}^{N}\sum\limits_{\substack{j=1\\j\ne i}}^N \frac{m_im_j\cosh d_{ij}}{\sinh^3d_{ij}}v_i^2\\- 2\sum_{i=1}^N\sum \limits_{\substack{j=1\\j\ne i}}^N \frac{m_im_j\cosh d_{ij}}{\sinh^3d_{ij}}v_iv_j
  =\ &\sum_{i=1}^N\sum\limits_{\substack{j=1\\j\ne i}}^N \frac{m_im_j\cosh d_{ij}}{\sinh^3d_{ij}}(v_i-v_j)^2 \geq 0.
   \end{align*}
  We can conclude that
  $\mbox{Hess}_\q(\v,\v)>0$ for all $\v \in T_{\q'}(\H_{xw}^1)^N\setminus \Delta$, so  $\q'$ is a local minimum of  $U(\q)-\lambda I(\q)$ on  $(\H_{xw}^1)^N\setminus \Delta$. Then $\q'$ is also a local minimum of the new function  $U(\q)-\lambda I(\q)+\lambda c$, restricted to the submanifold $S_c$. Note that, on $S_c$, this new function becomes $U$. Consequently  $\q'$ is a local minimum of $U$ on $S_c$.
 
 To show that such a minimum of $U$ is unique on each $(N-1)$-dimensional open disk, we can apply  a mountain pass theorem, \cite{Evans}. Assume that  there are two such minima. Connect these two points with a continuous family of curves. As the two ends are local minima, there must be a local maximum on each curve. Then the minimum of all these maxima must be a saddle point of $U$, in contradiction with the positive definiteness of the Hessian. 
 
 Note that a $180^\circ$ rotation in the $xy$-plane changes the ordering, which means that we counted each case twice, so there are exactly $N!/2$ classes of  geodesic central configurations, a remark that completes the proof.  
\end{proof}   
   
  \subsection{Geodesic central configurations in $\S^3$}
 
 Unlike in the hyperbolic case, Moulton's theorem has no straightforward generalization to $\S^3$. We give an example of geodesic central configurations for two masses to show that the number of central configurations on $S_c$ depends on the value of $c$. This example also provides some degenerate central configurations, as defined in Section \ref{criticalpoint}, and means that the corresponding critical points of $\hat{U}$ on $\hat{S_c}$ are degenerate.   
 
According to Theorem \ref{1d_CC}, any geodesic central configuration in $\S^3$ is equivalent to some geodesic central configuration on $\S^1_{xz}$. The example we will exhibit is that of  central configurations for two masses on $\S^1_{xz}$. Special central configurations cannot exist under these circumstances since any nonsingular configuration would force the two masses to lie inside a semicircle, which turns out to be impossible because such a configuration cannot generate relative equilibria, as proved in \cite{Diacu03}. Expressing the positions of $m_1$ and $m_2$ in terms of the oriented spherical distance, $\th_i\in [0,2\pi], \ i=1,2$, measured  from $(0,0,1,0)$ (see Figure \ref{fig:2central configuration}),  we can write the position vectors as
   $$
    \q_1=(-\sin\theta_1,0,\cos\theta_1,0),\ \
    \q_2=(-\sin\theta_2,0,\cos\theta_2,0), \ \ 0\le \theta_1<\theta_2\le 2\pi.
    $$
    Then the force function and the moment of inertia have the form
    $$
   U(\q)=m_1m_2\cot d_{12}\ \ {\rm and}\ \ 
    I(\q)=m_1\sin^2\theta_1+m_2\sin^2\theta_2,
    $$
respectively, where $d_{12}=\min\{\theta_2-\theta_1, 2\pi-\theta_2+\theta_1\}$ is the distance between the bodies. We can also assume, without loss of generality, that $\theta_1\in[0,\pi/2]$. This is all the preparation we need to state and prove the following result.
 \begin{figure}
  \centering
  \begin{tikzpicture}
  \draw (0,  0) circle (2);
  \draw[->] (0, 0) -- (45:2);
  \draw[->] (0, 0) -- (165:2);

  \draw[->] (90:.5)arc(90:405:.5);
  \draw[->] (90:.6)arc(90:165:.6);
  
  \node[right] at(210:.9) {$\theta_2$};
  \node[above] at(120:.8) {$\theta_1$};
  \fill (45:2) circle (2.5pt) node[right] {$m_2$};
  \fill (165:2) circle (2.5pt) node[left] {$m_1$};

   \draw [->] (0, -2.2)--(0, 2.2) node (zaxis) [left] {$z$};
         \draw [->](-2.2, 0)--(2.2, 0) node (yaxis) [right] {$x$};
  \end{tikzpicture}
  \caption{ A configuration of two masses on $\S^1_{xz}$} \label{fig:2central configuration}
   \end{figure}
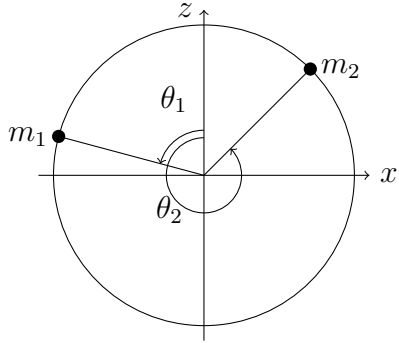
 \begin{theorem}\label{s1}
 Consider two masses $m_1$ and $m_2$ on  $\S^1_{xz}$ with positions $\q_1$ and $\q_2$ as above. Then these bodies can form a central configuration if and only if 
 \begin{equation} \label{2cc+}
 m_1 \sin 2\theta_1+ m_2 \sin 2\theta_2=0  \ {\rm with}\   \sin 2\theta_1\neq0.
 \end{equation} 
 The number of geodesic central configurations depends on the size $I(\q)=c$ of each configuration and is given in the table below, where $M:=m_1+m_2$. The table on the left is for $m_1<m_2$, whereas the table on the right is for the $m_1=m_2=:m$. 
 \begin{center}
 \begin{tabular}
 {|p{3cm}|p{1.5cm}|}
  \hline
    size: $I(\q)=c$ & number \\ \hline
    $c\in(0, m_1)$ & 2 \\  \hline
     $c\in[m_1,m_2]$ & 0 \\ \hline
     $c\in (m_2,M)$ & 2 \\ 
   \hline
 \end{tabular}
 \begin{tabular}{|p{3cm}|p{1.5cm}|}
   \hline
   size: $I(\q)=c$  & number \\ \hline
    $c\in(0, m)$ & 2\\  \hline
   $c=m$ & $\infty$ \\ \hline
   $c\in(m,M)$ & 2 \\ 
   \hline
 \end{tabular}
 \end{center}
 When the masses are equal and $c=m$, all central configurations are degenerate critical points of $U$ on $S_{m}$
  and the set they form has the power of the continuum.
 \end{theorem}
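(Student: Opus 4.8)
The plan is to carry out the whole analysis on the geodesic $\S^1_{xz}$, which is legitimate by Theorem \ref{1d_CC}, and to observe at the outset that \emph{special} central configurations cannot occur here: for two bodies $\nabla_{\q_i}U=\F_{12}\neq 0$ (cf.\ also \cite{Diacu03}), so the constant $\lambda$ in \eqref{CCE} must be nonzero for every central configuration on $\S^1_{xz}$. On $\S^1_{xz}$ both $\nabla_{\q_i}U$ and $\nabla_{\q_i}I$ are tangent to the geodesic (the former because all bodies lie on it, the latter being exactly why $\S^1_{xz}$ appears in Theorem \ref{1d_CC}), so \eqref{CCE} reduces to the two scalar equations $\partial_{\theta_i}(U-\lambda I)=0$, $i=1,2$. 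From $U=m_1m_2\cot d_{12}$ one gets $\partial_{\theta_1}U=-\partial_{\theta_2}U=\varepsilon\,m_1m_2/\sin^2 d_{12}$ for a sign $\varepsilon=\pm 1$ depending only on which arc realizes $d_{12}$, and $\partial_{\theta_i}I=m_i\sin 2\theta_i$. Adding the two scalar equations eliminates $U$ and yields $\lambda(m_1\sin 2\theta_1+m_2\sin 2\theta_2)=0$, hence $m_1\sin 2\theta_1+m_2\sin 2\theta_2=0$ because $\lambda\neq 0$; solving the first equation for $\lambda$ then forces $\sin 2\theta_1\neq 0$. Conversely, if $m_1\sin 2\theta_1+m_2\sin 2\theta_2=0$ with $\sin 2\theta_1\neq 0$ at a nonsingular configuration, the choice $\lambda=\varepsilon\,m_2/(\sin^2 d_{12}\,\sin 2\theta_1)$ satisfies both scalar equations (for the second, substitute $m_2\sin 2\theta_2=-m_1\sin 2\theta_1$). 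This gives the characterization \eqref{2cc+}.

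For the count I would recast the two conditions in complex form. Using $\sin^2\theta=\tfrac12(1-\cos 2\theta)$, the size condition $I(\q)=c$ becomes $m_1\cos 2\theta_1+m_2\cos 2\theta_2=M-2c$, so a central configuration of size $c$ is precisely a nonsingular pair $(\theta_1,\theta_2)$ with $\sin 2\theta_1\neq 0$ solving
\[
m_1e^{2i\theta_1}+m_2e^{2i\theta_2}=M-2c\in\R .
\]
Geometrically this asks for two plane vectors of lengths $m_1$ and $m_2$ whose resultant lies on the real axis with modulus $|M-2c|$; such a pair exists if and only if $|m_1-m_2|\le |M-2c|\le M$, and when the triangle with sides $m_1,m_2,|M-2c|$ is nondegenerate, $\cos 2\theta_1$ is uniquely determined by $m_1,m_2,c$, and the two orientations of the triangle are exactly the two solutions $(2\theta_1,2\theta_2)$ and $(-2\theta_1,-2\theta_2)$. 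The triangle degenerates precisely for $c\in\{0,m_1,m_2,M\}$ (for equal masses, $c\in\{0,m,2m\}$), and then $e^{2i\theta_1}$ is forced to be real, i.e.\ $\sin 2\theta_1=0$, so those values of $c$ yield no central configuration.

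Translating $|m_1-m_2|\le |M-2c|\le M$ into intervals for $c$ reproduces the rows of the tables: the empty range $c\in[m_1,m_2]$ (respectively the single excluded value $c=m$) and the nonempty ranges $c\in(0,m_1)\cup(m_2,M)$ (respectively $(0,m)\cup(m,M)$). On the nonempty ranges I would use the normalization $\theta_1\in[0,\pi/2]$ supplied by Definition \ref{equi-central configuration}: the subgroup of $SO(2)\times SO(2)$ preserving $\S^1_{xz}$ is the order-four group acting on the arc-length parameter by $\theta\mapsto\theta,-\theta,\theta+\pi,\pi-\theta$, its action is free on central configurations on $\S^1_{xz}$, so $[0,\pi/2]$ is a fundamental domain whose interior meets each equivalence class once. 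The forced value of $\cos 2\theta_1$ then pins down $\theta_1\in(0,\pi/2)$; next $I(\q)=c$ fixes $\cos 2\theta_2$ and \eqref{2cc+} fixes $\sin 2\theta_2$, so $2\theta_2$ is determined and $\theta_2$ has exactly the two lifts $\theta_2$ and $\theta_2+\pi$ on $\S^1_{xz}$. Since $(0,\pi/2)$ carries no residual symmetry, these are two distinct equivalence classes, matching the tables; one also checks that in the valid ranges neither lift collides with or is antipodal to $\q_1$, using $\sin 2\theta_1\neq 0$.

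Finally, in the equal-mass case at $c=m$ one has $M-2c=0$, so the complex equation collapses to $e^{2i\theta_2}=-e^{2i\theta_1}$, i.e.\ $\theta_2\equiv\theta_1+\tfrac{\pi}{2}\pmod{\pi}$ with $\sin 2\theta_1\neq 0$: a one-parameter family, so the set of these central configurations has the power of the continuum. Along this family $d_{12}\equiv\pi/2$, hence $U\equiv 0$ on it; being a one-dimensional submanifold of critical points of $U$ restricted to $S_m$, the Hessian of $U|_{S_m}$ annihilates the vector tangent to the family at each point, so every one of these central configurations is a degenerate critical point in the sense of Section \ref{criticalpoint} (and the same holds after passing to the finite quotient of Proposition \ref{six}). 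The step I expect to be the main obstacle is the bookkeeping in the third paragraph: pinning down the $\theta_1\in[0,\pi/2]$ normalization against the quotient of Definition \ref{equi-central configuration} and Proposition \ref{six}, verifying that the order-four symmetry acts freely so that no spurious fixed points or identifications distort the count, and treating the boundary values $c\in\{0,m_1,m_2,M\}$ and the equal-mass coincidence separately; the purely analytic part — the characterization \eqref{2cc+} and the existence/degeneracy ranges — is routine once the complex reformulation is in place.
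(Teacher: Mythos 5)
Your proposal is correct, and the derivation of condition \eqref{2cc+} is the same as the paper's: reduce \eqref{CCE} to the two scalar equations $\partial_{\theta_i}U=\lambda\,\partial_{\theta_i}I$, use $\partial_{\theta_1}U+\partial_{\theta_2}U=0$ to eliminate $U$, and invoke $\lambda\neq 0$ (no special central configurations for $N=2$). Where you diverge is the count: the paper solves the system $I=c$, $m_1\sin 2\theta_1+m_2\sin 2\theta_2=0$ explicitly for $\sin^2\theta_1=\frac{c(m_2-c)}{m_1(M-2c)}$ and $\sin^2\theta_2=\frac{c(m_1-c)}{m_2(M-2c)}$ and reads the admissible ranges of $c$ off the graphs of these rational functions, whereas you recast the two conditions as $m_1e^{2i\theta_1}+m_2e^{2i\theta_2}=M-2c$ and apply the triangle inequality. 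The two are equivalent, but your version makes the endpoint analysis ($c\in\{0,m_1,m_2,M\}$ forcing $\sin 2\theta_1=0$, and the equal-mass collapse at $c=m$) transparent, and it packages the ``exactly two solutions'' claim as the two orientations of a nondegenerate triangle. You are also more explicit than the paper about why $\theta_1\in(0,\pi/2)$ is a legitimate normalization: you identify the order-four stabilizer of $\S^1_{xz}$ inside $SO(2)\times SO(2)$ and check it acts freely on configurations with $\sin 2\theta_1\neq 0$, while the paper simply asserts ``without loss of generality.'' Finally, your degeneracy argument (a one-parameter family of critical points inside the one-dimensional $S_m$ forces the Hessian of $U|_{S_m}$ to vanish along it) is actually more careful than the paper's, which asserts $U\equiv 0$ on all of $S_m$; the full level set $\{\sin^2\theta_1+\sin^2\theta_2=1\}$ also contains the branches $\theta_1+\theta_2\equiv\pi/2\pmod\pi$ on which $d_{12}\neq\pi/2$ and $U$ is not zero, so the vanishing of $U$ really only holds on the components $\theta_2\equiv\theta_1+\pi/2\pmod\pi$ that you isolate. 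One small internal inconsistency to fix in your write-up: in the second paragraph you list the equal-mass degenerate values as $c\in\{0,m,2m\}$ and say these ``yield no central configuration,'' which contradicts your (correct) fourth paragraph showing that $c=m$ yields a continuum; the statement ``$e^{2i\theta_1}$ is forced to be real'' holds for the parallel degeneration and for the antiparallel one only when $m_1\neq m_2$, so the sentence should exclude the equal-mass antiparallel case explicitly.
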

 \begin{proof}
 In this case, the central configuration equation  \eqref{CCE}, $\nabla_{\q_i}U=\lambda\nabla_{\q_i}I, i=1,2$,  reduces to 
 $$
 \frac{\partial U}{\partial \theta_1} = \lambda \frac{\partial I}{\partial \theta_1} \ \ {\rm and}\ \
  \frac{\partial U}{\partial \theta_2} = \lambda \frac{\partial I}{\partial \theta_2},   
 $$
 which implies that 
 $$
 \frac{\pm m_1m_2}{\sin^2(\theta_2-\theta_1)}=\lambda m_1 \sin 2\theta_1\ \ {\rm and}\ \   \frac{\mp m_1m_2}{\sin^2(\theta_2-\theta_1)}=\lambda m_2 \sin 2\theta_2,  
 $$
 where the signs depend on whether $d_{12}$ equals $\theta_2-\theta_1$ or  $2\pi-\theta_2+\theta_1$. From these equations we obtain the condition
 $$
 m_1 \sin 2\theta_1+ m_2 \sin 2\theta_2=0 \ {\rm with}\  \sin 2\theta_1\neq 0. 
 $$
 This relationship implies that $\theta_1\in \left(0, \frac{\pi}{2}\right)$ and $\theta_2 \in \left(\frac{1}{2}\pi,\pi\right) $ or $\theta_2\in\left(\frac{3}{2}\pi,2\pi\right)$. 
 
 To find the number of central configurations on $S_c$, we solve the system
 \begin{equation*}
 \begin{cases}
 m_1\sin^2\theta_1+m_2\sin^2\theta_2=c\cr
 m_1 \sin 2\theta_1+ m_2 \sin 2\theta_2=0,
 \end{cases}
 \end{equation*}
 and obtain
 \[  \sin^2\theta_2 = \frac{c(m_1-c)}{m_2(M-2c)} \ \  \mbox{and}\ \ \sin^2\theta_1 = \frac{c(m_2-c)}{m_1(M-2c)}. \] 
 Notice that $\sin 2\theta_i\neq0$, so let
 \[ 0<\frac{c(m_1-c)}{m_2(M-2c)} <1, \ \ 0<\frac{c(m_2-c)}{m_1(M-2c)}<1. \]
 We are then led to 
 \[ 0<c<m_1, \ \  m_2<c<M,\]
 a fact that can also be seen in the graphs of Figure \ref{fig:M1}, where a typical function of the form $\frac{c(m_1-c)}{m_2(M-2c)}$ is represented for $m_1<m_2$, on the left, and $m_1=m_2$, on the right.
 \begin{figure}
 \centering
 \begin{tikzpicture}
 \vspace*{0cm}\hspace*{-3.5cm}
 \draw [->] (0, -1)--(0, 1.5);
  \draw [-](-0.5, 1)--(3.1, 1) node at (-0.3,1) [left] {$1$};
            \draw [->](-1, 0)--(3.3, 0) node  [right] {$c$};
  \draw [dotted] (1.5, -0.5)--(1.5,1.5);

  \fill (1.4,0) circle (1.4pt) node[below left] {$m_1$};
  \fill (1.6,0) circle (1.4pt) node[below right] {$m_2$};
  \fill (3,0) circle (1.4pt) node[below] {$M$};
  
 \draw [  domain=-0.2:1.45] plot (\x, {\x*(1.4-\x)/(1.6*(3-2*\x))});
 \draw [   domain=1.55:3.2] plot (\x, {\x*(1.4-\x)/(1.6*(3-2*\x))});
 
 \vspace*{0cm}\hspace*{8cm}
 
 \draw [->] (0, -1)--(0, 1.5);
  \draw [-](-0.5, 1)--(3.1, 1) node at (-0.3,1) [left] {$1$};
            \draw [->](-1, 0)--(3.3, 0) node  [right] {$c$};
  \draw [dotted] (1.5, -0.5)--(1.5,1.5); 
  
  \draw  (1.5,0.5) circle [radius=0.07];;
  \fill (1.5,0) circle (1.4pt)node[below] {$m$};
  \fill (3,0) circle (1.4pt) node[below] {$M$};
  
 \draw [  domain=-.2:1.45] plot (\x, {\x/3});
 \draw [  domain=1.61:3.1] plot (\x, {\x/3});
 \end{tikzpicture}
 \caption{The graphs of $\sin^2\theta_2 = \frac{c(m_1-c)}{m_2(M-2c)}$ for $m_1<m_2$ (left) and $m_1=m_2=:m$ (right) in coordinates $(c,\sin^2\theta_2$). } \label{fig:M1}
 \end{figure}
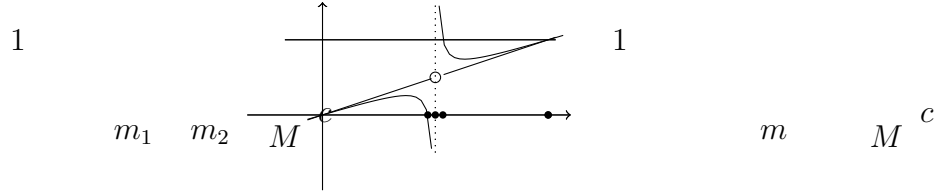
 
 Thus having $c$ in this range, we can obtain the values for $\sin^2 \theta_i<1, i=1,2$. 
 Using the fact that $\theta_1\in \left(0, \frac{\pi}{2}\right)$ and $\theta_2 \in \left(\frac{1}{2}\pi,\pi\right) $ or $\theta_2 \in\left(\frac{3}{2}\pi,2\pi\right)$, we see that there are exactly two central configurations for each $c$:
 $$
 (\theta_1, \theta_2) \in \left(0, \frac{\pi}{2}\right) \times \left(\frac{1}{2}\pi,\pi\right) \ {\rm and}\ \ \! (\theta_1, \theta_2 + \pi) \in \left(0, \frac{\pi}{2}\right) \times \left(\frac{3}{2}\pi,2\pi\right).
 $$
  
 If $m_1=m_2=m$ and $I=m$, we have 
 $$
 \hat{S}_m=S_m=\{ (\th_1,\th_2)\in (0,\pi/2)\times [0,2\pi] \ | \  \th_1<\th_2,\  \sin^2\theta_1+\sin^2\theta_2=1\},
 $$ 
 which implies that 
$$\hat{S}_m=\{ \th_1 \in (0,\pi/2),\
 \theta_2=\theta_1 +\pi/2 \ \ \mbox{or} \ \ \theta_2=\theta_1 +3\pi/2\}. 
 $$
 Thus $d_{12}=\pi/2$ and $\hat{U}=U=m_1m_2\cot d_{12}=0$ on $S_m$, which means that all elements of this set are degenerate critical points of $U$ on $S_m$, so they are degenerate central configurations. This remark justifies the values in the above tables and completes the proof. 
 \end{proof}
 The related problem of finding relative equilibria on $\S_{xz}^1$ has also been considered by A.A.\ Kilin, who obtained the same criterion given in the first part of Theorem \ref{s1}, \cite{Kilin}.
 \begin{remark}
 The complicated count of geodesic  central configurations in $\S^3$ is a consequence of two facts: the boundary of some components in $S_c$ may contain points in $\Delta^+$ and $\Delta^-$, which can destroy the existence of critical points on those components; and the geodesic central configurations are not necessarily minima of $U$ on $S_c$.  
 \end{remark}
 
 \section{Conclusions}
 
 So far, the only classes of solutions found for the $N$-body problem in spaces of constant curvature were relative equilibria and rotopulsators, the latter allowing dilations and contractions of the configuration, which, of course, failed to maintain similarity, \cite{Diacu06}, although, very recently, some numerical results point out to the existence of choreographies, including the figure eight solution on the sphere $\S^2$, \cite{Mont}. However, these studies are only at the beginning, and the current paper shows that the approach we took here offers another way to answer some of the natural problems that occur in the qualitative study of the equations of motion
 and the dynamics of the solutions.
 
But most questions related to central configurations are far from easy, as it also happens in the Euclidean case. Even finding all the central configurations in the curved 3-body problem, which has been settled in the classical case long time ago, is not trivial in curved space and requires a separate study. As we have already seen, new central configurations, such as the isosceles triangles, or the scalene triangles on the equator of the sphere, none of which have correspondents in the Euclidean case, show up. So far, all these central configurations on $\S^2_{xyz}$ lie in planes parallel with the $xy$-plane, except for the geodesic ones.  But at this point we have some indication that most triangular central configurations do not lie in planes parallel with the $xy$-plane, and hope to be able to prove this statement in a future paper. So even for only three bodies, the set of central configurations of the curved problem is significantly richer than in the Euclidean case, especially in the case of the sphere.
 
These investigations hint at the rich dynamics of the curved $N$-body problem and show that the  questions occurring from its study allow us to view the classical case from a new perspective. Having now extended the concept of central configuration to the curved problem, we have a new tool and a new direction of research, which will hopefully shed more light on the equations of motion that govern this mathematical model. 
 
 \bigskip
\noindent{\bf Acknowledgments.}  Cristina Stoica and Florin Diacu enjoyed partial support from Discovery Grants awarded by NSERC of Canada. Shuqiang Zhu was funded by a University of Victoria Scholarship and a David and Geoffrey Fox Graduate Fellowship.


\end{document}